\newcommand\keywords[1]{\textbf{Keywords}: #1}
\title{Convergence Analysis for Restarted Anderson Mixing and Beyond}
\author{Fuchao Wei $^1$, Chenglong Bao $^{3,4}$, Yang Liu $^{1,2}$, and Guangwen Yang $^1$ \\
 $^1$Department of Computer Science and Technology, Tsinghua University\\
 $^2$Institute for AI Industry Research (AIR), Tsinghua University\\
 $^3$Yau Mathematical Sciences Center, Tsinghua University\\
 $^4$Yanqi Lake Beijing Institute of Mathematical Sciences and Applications  \\
\texttt{wfc16@mails.tsinghua.edu.cn, \{clbao,liuyang2011,ygw\}@tsinghua.edu.cn}
}
\date{}
\newtheorem{theorem}{Theorem}[section]
\newtheorem{lemma}[theorem]{Lemma}
\newtheorem{proposition}[theorem]{Proposition}
\newtheorem{assumption}[theorem]{Assumption}
\newtheorem{corollary}[theorem]{Corollary}
\theoremstyle{definition}
\newtheorem{definition}[theorem]{Definition}
\theoremstyle{remark}
\newtheorem{remark}[theorem]{Remark}
\numberwithin{equation}{section}
\begin{document}

\maketitle

\begin{abstract}
Anderson mixing (AM) is a classical method that can accelerate fixed-point iterations by exploring historical information. Despite the successful application of AM in scientific computing, the theoretical properties of AM are still under exploration. In this paper, we study the restarted version of the Type-I and Type-II AM methods, i.e., restarted AM. With a multi-step analysis, we give a unified convergence analysis for the two types of restarted AM and justify that the restarted Type-II AM can locally improve the convergence rate of the fixed-point iteration. Furthermore, we propose an adaptive mixing strategy by estimating the spectrum of the Jacobian matrix. If the Jacobian matrix is symmetric, we develop the short-term recurrence forms of restarted AM to reduce the memory cost. Finally, experimental results on various problems validate our theoretical findings.
\end{abstract}

\keywords{Anderson mixing, fixed-point iteration, Krylov subspace methods, nonlinear equations, linear equations, unconstrained optimization}

\section{Introduction} \label{sec:intro}
%
%
 Anderson mixing (AM) \cite{anderson1965iterative},  also known as Anderson acceleration \cite{walker2011anderson}, or Pulay mixing, 
 DIIS method in quantum chemistry~\cite{pulay1980convergence,pulay1982improved,rohwedder2011analysis},  is a classical extrapolation method  for accelerating  fixed-point iterations \cite{brezinski2018shanks} and has wide applications in scientific computing \cite{arora2017accelerating,Pratapa2016Anderson,Ho2017Acce,pollock2019anderson,yang2021anderson}. 
  Consider a fixed-point problem
 \begin{align}
  x = g(x),     \label{eq:fixed-point}
 \end{align}
 where $x\in \mathbb{R}^d$ and $g: \mathbb{R}^d\rightarrow \mathbb{R}^d$. 
 The conventional fixed-point iteration
 \begin{align}
  x_{k+1} = g(x_k), ~ k=0,1,\dots,   \label{eq:picard}
 \end{align}
 converges if $g$ is contractive. 
 To accelerate the convergence of \eqref{eq:picard}, AM generates each iterate by the extrapolation of historical steps. Specifically, let $m_k\geq 0$ be the size of the used historical sequences at the $k$-th iteration. AM obtains $x_{k+1}$ via
 \begin{align}
 	x_{k+1} = (1-\beta_k)\sum_{j=0}^{m_k}\alpha_k^{(j)} x_{k-m_k+j} +\beta_k\sum_{j=0}^{m_k}\alpha_k^{(j)}g(x_{k-m_k+j}),  \label{eq:extrapolation}
 \end{align}
 where $\beta_k>0$ is the mixing parameter, and the extrapolation coefficients $\lbrace \alpha_k^{(j)} \rbrace_{j=0}^{m_k}$ are determined by solving a constrained least squares problem:
 \begin{align}
 	\min_{\lbrace \alpha_k^{(j)} \rbrace_{j=0}^{m_k}} \left\| \sum_{j=0}^{m_k}\alpha_k^{(j)}\left(g(x_{k-m_k+j})-x_{k-m_k+j}\right) \right\|_2 ~~ \mbox{s.t.} ~~ \sum_{j=0}^{m_k}\alpha_k^{(j)} = 1.  \label{eq:optim}
 \end{align}
 There are several approaches to choosing $m_k$. For example, the full-memory AM chooses $m_k = k$, i.e., using the whole historical sequences for one extrapolation; the limited-memory AM sets $m_k = \min\lbrace m,k\rbrace$, where $m\geq 1$ is a constant integer.  
 

     
 For solving systems of equations where the fixed-point iterations are slow in convergence, AM is a practical alternative to Newton's method when handling the Jacobian matrices is difficult \cite{kelley2018numerical,brezinski2020shanks}. It has been recognized that AM is a multisecant quasi-Newton method that implicitly updates the approximation of the inverse Jacobian matrix to satisfy multisecant equations \cite{fang2009two}. Also, another type of AM called Type-I AM was introduced in \cite{fang2009two}. Different from the original AM (also called Type-II AM), the Type-I AM directly approximates the Jacobian matrix. Both types of AM have been adapted to solve various fixed-point problems \cite{Higham2016Anderson,mai2020anderson,zhang2020globally,fu2020anderson,sun2021damped}. 

  Motivated by the promising numerical performance in many applications, the theoretical analysis of AM methods has become an important topic.
 For solving linear systems, it turns out that both types of full-memory AM methods are closely related to Krylov subspace methods \cite{walker2011anderson}. However, for solving nonlinear problems, the theoretical properties of AM are still vague. 
 For the Type-II AM, the known results in \cite{toth2015convergence,Toth2017Local,chen2019convergence,bian2021anderson} show that the limited-memory version has a local linear convergence rate that is no worse than that of the  fixed-point iteration.  Recent works \cite{evans2020proof,pollock2021anderson}  further point out that the potential improvement of AM over fixed-point iterations depends on the quality of extrapolation, which is determined during iterations.  For the Type-I AM, whether similar results hold remains unclear.  
  It is worth noting that these theoretical results of the limited-memory AM follow the conventional one-step analysis, which may only have a partial assessment of the efficacy of AM, as also commented by Anderson in his review \cite{anderson2019comments}.
  A fixed-point analysis in \cite{de2022Linear} reveals the continuity and differentiability properties of the Type-II AM iterations, but the convergence still lacks theoretical quantification.
  Besides,  some new variants of AM have been developed and analyzed in different settings, e.g., see \cite{Scieur2020Reg,de2021asymptotic,WeiBL21,bian2022anderson,wei2022}. 

 In this paper, we apply a multi-step analysis to investigate the long-term convergence behaviour of AM for solving nonlinear fixed-point problems. We focus on the restarted version of AM, i.e., restarted AM, where the method clears the  historical information and restarts when some restarting condition holds. Restart is a common approach to improving the stability and robustness of AM \cite{fang2009two,zhang2020globally,he2022gdaam,Garstka2022safeguarded}. 
 Compared with the limited-memory AM, the restarted AM has the benefit that it is more amenable to extending the relationship between AM methods and Krylov subspace methods to nonlinear problems. Based on such a relationship, we establish the convergence properties of both types of restarted AM methods which explain the efficacy of AM in practice. Furthermore, by investigating the properties of restarted AM, we obtain an efficient procedure to estimate the eigenvalues of the Jacobian matrix that is beneficial for choosing the mixing parameters; for problems with symmetric Jacobian matrices, we derive the short-term recurrence forms of AM. We highlight our main contributions as follows.

 \begin{enumerate}
 \item We formulate the restarted Type-I and Type-II AM methods with certain restarting conditions and give a unified convergence analysis for both methods. Our multi-step analysis justifies that the restarted Type-II AM method can locally improve the convergence rate of the fixed-point iteration.
  \item  We propose an adaptive mixing strategy that adaptively chooses the mixing parameters by estimating the eigenvalues of the Jacobian matrix.  The eigenvalue estimation procedure originates from the projection method for eigenvalue problems and can be efficiently implemented using historical information. We also discuss the related theoretical properties. 
  \item  We show that the restarted AM methods can be simplified to have short-term recurrences if the Jacobian matrix is symmetric, which can reduce the memory cost. We give the convergence analysis of the short-term recurrence methods and
   develop the corresponding adaptive mixing strategy. 
 \end{enumerate}
 
 
 {\em Notations}. The operator $\Delta$ denotes the forward difference, e.g.,  $\Delta x_k = x_{k+1}-x_k$. $h'$ is the Jacobian of a function $h:\mathbb{R}^d\rightarrow\mathbb{R}^d$. For every matrix $A$, 
  ${\rm range}(A) $ is the subspace spanned by the columns of $A$; $\mathcal{K}_k(A,v):={\rm span}\lbrace v,Av,\dots,A^{k-1}v \rbrace$ is the $k$-th Krylov subspace generated by $A$ and a vector $v$; $\mathcal{S}(A):=(A+A^\mathrm{T})/2$ is the symmetric part of $A$; $\sigma(A)$ is the spectrum of $A$; $\|A\|_2$ is the spectral norm of $A$; $\|x\|_A:=(x^\mathrm{T}Ax)^{1/2}$ is the $A$-norm if $A$ is symmetric positive definite (SPD). 
$\mathcal{P}_k$ denotes the space of polynomials of degree not exceeding $k$.
 
\section{Two types of Anderson mixing methods} \label{sec:am}
 We re-interpret each iteration of the Type-I/Type-II AM method as a two-step procedure following \cite{WeiBL21}. 
 Define $r_k=g(x_k)-x_k$ to be the {\em residual} at $x_k$. The historical sequences are stored as two matrices $X_k, R_k \in \mathbb{R}^{d\times m_k}$ $(m_k\geq 1)$:
 \begin{equation}
 \begin{aligned}
   X_k &= (
		\Delta x_{k-m_k} , \Delta x_{k-m_k+1} , \dots , \Delta x_{k-1} ),\\
   R_k &= (
		\Delta r_{k-m_k} , \Delta r_{k-m_k+1} , \dots , \Delta r_{k-1} ). \label{Xk_Rk}
		\end{aligned}
 \end{equation}
 Both Type-I and Type-II AM obtain $x_{k+1}$ via a {\em projection step} and a {\em mixing step}:
\begin{equation}
\begin{aligned}
     \bar{x}_k &= x_k - X_k\Gamma_k,  \quad \quad \mbox{(Projection step)}\\
     \bar r_k &= r_k - R_k\Gamma_k,\\
     x_{k+1} &= \bar{x}_k + \beta_k\bar{r}_k, \quad \quad \mbox{ (Mixing step)}
\label{projection_mixing}
\end{aligned}   
\end{equation}
 where $\beta_k>0$ is the mixing parameter. For convenience, let $Z_k:=X_k$ for the Type-I AM and $Z_k:=R_k$ for the Type-II AM, then $\Gamma_k$ is determined by the condition 
 \begin{align}
  \bar{r}_k \perp {\rm range}(Z_k).		 \label{eq:cond}
 \end{align}
 Assume $Z_k^\mathrm{T}R_k$ is nonsingular. 
 From \eqref{projection_mixing}, $x_{k+1} = x_k + \beta_k r_k - \left( X_k + \beta_k R_k \right)\Gamma_k$.  With the solution $\Gamma_k$ from \eqref{eq:cond}, we obtain 
 \begin{align}
  x_{k+1} = x_k+G_kr_k, \ \mbox{where} \  G_k = \beta_k I -(X_k+\beta_kR_k)(Z_k^\mathrm{T}R_k)^{-1}Z_k^\mathrm{T}.   \label{eq:am_update}
 \end{align}
 For the Type-I AM, $G_k$ satisfies $G_k = J_k^{-1}$, where $J_k$ solves  $\min_{J}\|J-\beta_k^{-1}I\|_F$ s.t. $J X_k = -R_k$; For the Type-II AM, $G_k$ solves $\min_{G}\|G-\beta_kI\|_F$ s.t. $GR_k=-X_k$. Hence, both methods can be viewed as multisecant quasi-Newton methods \cite{fang2009two}. 
  \begin{remark}   
  For the Type-II method, the condition \eqref{eq:cond} is equivalent to $\Gamma_k = \arg\min_{\Gamma\in\mathbb{R}^{m_k}} \| r_k-R_k\Gamma\|_2 $. 
  Let $\Gamma_k = (\Gamma_k^{(1)},\dots,\Gamma_k^{(m_k)})^\mathrm{T}\in\mathbb{R}^{m_k}$.  The extrapolation coefficients $\lbrace\alpha_k^{(j)}\rbrace$ can be obtained from $\Gamma_k$: $\alpha_k^{(0)}=\Gamma_k^{(1)},  \alpha_k^{(j)}=\Gamma_k^{(j+1)}-\Gamma_k^{(j)} (j=1,\dots,m_k-1), \alpha_k^{(m_k)}=1-\Gamma_k^{(m_k)}$. 
  Then $r_k-R_k\Gamma_k = \sum_{j=0}^{m_k}\alpha_k^{(j)}r_{k-m_k+j}$.  
 The above formulation of Type-II AM is equivalent to that given by \eqref{eq:extrapolation} and \eqref{eq:optim}.
  \end{remark}

\section{Restarted Anderson mixing} \label{sec:restarted_am}
 Initialized with $m_0 = 0$,  the restarted AM sets $m_k = m_{k-1}+1$ if no restart occurs and sets $m_k=0$ if a restarting condition is satisfied, similar to the restarted GMRES \cite{saad1986gmres}. 
 Thus, the restarting conditions are critical for the method.
 To define the restarting conditions, we first construct modified historical sequences. Such modification does not alter the iterates but is essential for the following analysis.

 
\subsection{The AM update with modified historical sequences} \label{subsec:modified_seqs}
 Consider the nontrivial case that $m_k>0$. Note that the $G_k$ in \eqref{eq:am_update} does not change if we replace $X_k, R_k$ by $P_k := X_kS_k^{-1}, Q_k := R_kS_k^{-1}$, where $S_k\in\mathbb{R}^{m_k\times m_k}$ is nonsingular. So we can choose some suitable transformation $S_k$ to reformulate the AM update. 
 
 We construct the modified historical sequences  $ P_k = (p_{k-m_k+1},\dots,p_k), Q_k = (q_{k-m_k+1},\dots,q_k)$ in a recursive way. Let $V_k:=P_k$ for the Type-I AM and $V_k:=Q_k$ for the Type-II AM.  
 Assume that $\det(Z_j^\mathrm{T}R_j)\neq 0$ for $j=k-m_k+1,\dots,k$. 
 The AM update with modified historical sequences consists of the following two steps.
 
 {\em Step~1: Modified vector pair.} 
 If $m_k = 1$, then $p_k = \Delta x_{k-1}, q_k = \Delta r_{k-1}$.  
 If $m_k \geq 2$, we set the vector pair $p_k, q_k$ as
 \begin{align}
  p_k = \Delta x_{k-1}-P_{k-1}\zeta_k, ~~ q_k = \Delta r_{k-1}-Q_{k-1}\zeta_k,  \label{eq:q_k_update}
 \end{align}
 where $\zeta_k = (\zeta_k^{(1)},\dots,\zeta_k^{(m_k-1)})^\mathrm{T}$ is determined by $q_k \perp {\rm range}(V_{k-1})$. 

 {\em Step~2: AM update.}  We obtain $x_{k+1}$ via
 \begin{align}
  \bar{x}_k = x_k-P_k\Gamma_k, ~~ \bar{r}_k = r_k-Q_k\Gamma_k, ~~ x_{k+1} = \bar{x}_k+\beta_k\bar{r}_k,  \label{am_update}
 \end{align}
 where $\Gamma_k =(\Gamma_k^{(1)},\dots,\Gamma_k^{(m_k)})^\mathrm{T}$ is determined by $\bar{r}_k \perp {\rm range}(V_k)$. 
  
  It can be verified by induction that the above process produces the same iterates as \eqref{eq:am_update}.  
  To facilitate the analysis, we give explicit procedures to obtain  $\zeta_k$ and $\Gamma_k$.  
   
   Let $Z_k = (z_{k-m_k+1},\dots,z_k)$, $V_k = (v_{k-m_k+1},\dots,v_k)$. 
   We first describe the procedure to compute $\zeta_k$ and $q_k$. 
    Define $q_k^{0} = \Delta r_{k-1}$. For $j=1,2,\ldots,m_k-1$, the procedure computes $\zeta_k^{(j)}$ and the intermediate vector $q_k^j$ sequentially: 
  \begin{equation}
  \begin{aligned}
    & \zeta_k^{(j)} = \frac{v_{k-m_k+j}^\mathrm{T}q_k^{j-1}}{v_{k-m_k+j}^\mathrm{T}q_{k-m_k+j}},\quad q_k^{j} = q_k^{j-1}-q_{k-m_k+j}\zeta_k^{(j)}.  
 \end{aligned}
 \label{eq:q_k_type2}
  \end{equation}
 Then $q_k = q_k^{m_k-1}$. 
 Next, $\Gamma_k$ and $\bar{r}_k$ can be computed similarly. Define $r_k^{0} = r_k$. For $j=1,2,\ldots,m_k$, the $\Gamma_k^{(j)}$ and the intermediate vector $r_k^j$ are computed sequentially: 
 \begin{equation}
       \Gamma_k^{(j)} = \frac{v_{k-m_k+j}^\mathrm{T}r_k^{j-1}}{v_{k-m_k+j}^\mathrm{T}q_{k-m_k+j}},\quad r_k^j = r_k^{j-1}-q_{k-m_k+j}\Gamma_k^{(j)}. \label{eq:r_k_type2}    \\
 \end{equation}
 Then $\bar{r}_k = r_k^{m_k}$. 
 Procedures \eqref{eq:q_k_type2} and \eqref{eq:r_k_type2} are reminiscent of  the modified Gram-Schmidt orthogonalization process that is recommended for the implementation of Type-II AM \cite{anderson2019comments}. 
The next proposition shows the correctness of the above procedures. 
 \begin{proposition} \label{prop:modified_seqs}
  Suppose that $\det(Z_j^\mathrm{T}R_j) \neq 0$ for $j=k-m_k+1,\dots,k$. Then 
  the procedures \eqref{eq:q_k_type2} and \eqref{eq:r_k_type2} are well defined, and the following properties hold:
 \begin{enumerate}
   \item $X_k = P_kS_k, R_k = Q_kS_k$, where $S_k$ is unit upper triangular;  \label{am_update:property1}
   \item $V_k^\mathrm{T}Q_k$ is lower triangular; \label{am_update:property2}
   \item $\bar{r}_k \perp {\rm range}(V_k)$.  \label{am_update:property3}
 \end{enumerate}
 The scheme \eqref{am_update} produces the same $\lbrace x_j \rbrace_{j=k-m_k+1}^{k+1}$ as the original AM update \eqref{eq:am_update}. 
 \end{proposition}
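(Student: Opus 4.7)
The plan is to proceed by induction on the window size $m_k$ between restarts. The base case $m_k=1$ is immediate: $P_k=X_k$, $Q_k=R_k$, $S_k=(1)$, $V_k=Z_k$, and the defining orthogonality for $\Gamma_k$ in \eqref{am_update} reduces to \eqref{eq:cond}. Assuming the three claims hold at the $k-1$ level (i.e.\ for a window of size $m_k-1$), I show they are inherited after appending one column.

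The first step is well-definedness of \eqref{eq:q_k_type2} and \eqref{eq:r_k_type2}. From the inductive form of the transformation I have $P_j=X_jS_j^{-1}$, $Q_j=R_jS_j^{-1}$, and hence $V_j=Z_jS_j^{-1}$ in both the Type-I and Type-II cases. Consequently
\[
V_j^{\mathrm{T}}Q_j = S_j^{-\mathrm{T}}(Z_j^{\mathrm{T}}R_j)S_j^{-1},
\]
and since $S_j$ is unit triangular the determinant equals $\det(Z_j^{\mathrm{T}}R_j)\neq 0$ by hypothesis. Combined with the inductive lower triangularity of $V_{k-1}^{\mathrm{T}}Q_{k-1}$, this forces every diagonal entry $v_i^{\mathrm{T}}q_i$ to be nonzero, so the denominators appearing in both procedures are nonzero. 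Property~\ref{am_update:property1} then follows from the block identity
\[
S_k=\begin{pmatrix} S_{k-1} & \zeta_k \\ 0 & 1 \end{pmatrix},
\]
which is unit upper triangular because $S_{k-1}$ is.

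To establish Property~\ref{am_update:property2}, I verify that the $q_k$ output by \eqref{eq:q_k_type2} satisfies $q_k\perp\mathrm{range}(V_{k-1})$. At iteration $j$, the choice of $\zeta_k^{(j)}$ enforces $v_{k-m_k+j}^{\mathrm{T}}q_k^{j}=0$. The delicate point is that later iterations preserve this: for $j'>j$,
\[
v_{k-m_k+j}^{\mathrm{T}}q_k^{j'}=v_{k-m_k+j}^{\mathrm{T}}q_k^{j'-1}-\bigl(v_{k-m_k+j}^{\mathrm{T}}q_{k-m_k+j'}\bigr)\zeta_k^{(j')},
\]
and the parenthesized inner product is an above-diagonal entry of the lower-triangular matrix $V_{k-1}^{\mathrm{T}}Q_{k-1}$, hence zero by the inductive hypothesis. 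Thus $V_{k-1}^{\mathrm{T}}q_k=0$ and $V_k^{\mathrm{T}}Q_k$ is lower triangular. Property~\ref{am_update:property3} is obtained by the same bookkeeping applied to \eqref{eq:r_k_type2}, now invoking the lower-triangular structure of $V_k^{\mathrm{T}}Q_k$ just proved.

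For the equivalence of the two schemes, I observe that the matrix $G_k$ in \eqref{eq:am_update} is invariant under the right multiplication $(X_k,R_k)\mapsto(X_kT,R_kT)$ by any nonsingular $T\in\mathbb{R}^{m_k\times m_k}$: a direct substitution cancels the $T$'s in $(X_k+\beta_kR_k)(Z_k^{\mathrm{T}}R_k)^{-1}Z_k^{\mathrm{T}}$. Taking $T=S_k^{-1}$ identifies $(P_k,Q_k)$ with the modified pair, so \eqref{am_update} yields the same $x_{k+1}$ as \eqref{eq:am_update}; because no restart occurs inside the window, the same equality holds at every intermediate index. The main obstacle is precisely the orthogonality-propagation argument in \eqref{eq:q_k_type2} and \eqref{eq:r_k_type2}: each modified Gram--Schmidt-style step cancels only the newest projection, and the persistence of earlier orthogonalities hinges on the zero upper triangle of $V^{\mathrm{T}}Q$ that the induction carries forward, which is what distinguishes the analysis from the naive classical Gram--Schmidt situation.
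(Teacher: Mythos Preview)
Your proof is correct and follows the same inductive strategy as the paper: induct on the window size, use the lower-triangularity of $V_{k-1}^{\mathrm{T}}Q_{k-1}$ to show that each orthogonality enforced in the Gram--Schmidt-style sweeps persists through later steps, and deduce the update equivalence from the invariance of $G_k$ under right column transformations. One small ordering point: your claim that ``the denominators appearing in both procedures are nonzero'' is premature for the last denominator $v_k^{\mathrm{T}}q_k$ in \eqref{eq:r_k_type2}; this only follows once Properties~\ref{am_update:property1} and~\ref{am_update:property2} are in hand at level $k$, giving $\det(V_k^{\mathrm{T}}Q_k)=\det(Z_k^{\mathrm{T}}R_k)\neq 0$ with $V_k^{\mathrm{T}}Q_k$ lower triangular---the paper interleaves these steps more explicitly, but you have all the pieces.
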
  
 
 The proof is given in \Cref{subsec:proof:prop:modified_seqs}.  
 It is worth noting that our formulation of the restarted AM focuses on theoretical analysis. Better implementations are needed in some specific scenarios, e.g., parallel computing. 

\subsection{Restarting conditions}  \label{subsec:restart_conds}
Let $\tau\in (0,1), \eta > 0$, and $ m\in(0,d]$ is an integer. 
 Following \cite{wei2022}, 
the restart criterion is related to  the following conditions: 
 \begin{align}
  & m_k \leq m,   \label{ineq:cond1}  \\
  & \vert v_k^\mathrm{T}q_k \vert \geq \tau \vert v_{k-m_k+1}^\mathrm{T}q_{k-m_k+1} \vert,  \label{ineq:cond2} \\
  & \|r_k\|_2 \leq \eta \|r_{k-m_k}\|_2. \label{ineq:cond3}
 \end{align}
 If any condition in~\eqref{ineq:cond1}-\eqref{ineq:cond3} is violated during the iteration,  set $m_k = 0$ and restart the method.  Details of the restarted AM are given in \Cref{alg:restartedAM}. Next, we explain the rationale behind the above three conditions.
 
  \begin{algorithm}[ht]
\caption{Restarted Anderson mixing for solving the fixed-point problem \eqref{eq:fixed-point}. The Type-I method: $v_j:=p_j$ $(j\geq 1)$; the Type-II method: $v_j:=q_j$ $(j\geq 1)$. }
\label{alg:restartedAM}
\textbf{Input}: $ x_0\in\mathbb{R}^d, \beta_k>0, m\in\mathbb{Z}_+, \tau\in(0,1), \eta >0 $\\
\textbf{Output}: $ x\in\mathbb{R}^d $
\begin{algorithmic}[1] 
\STATE $ m_0=0$
\FOR{$k=0,1,\dots, $ until convergence}
 \STATE $ r_k=g(x_k)-x_k $
 \IF { $m_k>m$ \textbf{or} $\|r_k\|_2 > \eta \|r_{k-m_k}\|_2$}
 \STATE $m_k = 0$
 \ENDIF
 \IF{$m_k>0$} 
  \STATE $p_k=x_k-x_{k-1}, ~~ q_k = r_k-r_{k-1}$  
  \FOR{$j=1,\dots,m_k-1$}
   \STATE $\zeta = \left(v_{k-m_k+j}^\mathrm{T}q_k\right)/\left(v_{k-m_k+j}^\mathrm{T}q_{k-m_k+j}\right)$
   \STATE $p_k=p_k-p_{k-m_k+j}\zeta, ~~ q_k = q_k-q_{k-m_k+j}\zeta$
  \ENDFOR  
  \IF{ $\vert v_k^\mathrm{T}q_k\vert < \tau \vert v_{k-m_k+1}^\mathrm{T}q_{k-m_k+1}\vert$ }
  \STATE $m_k = 0$
  \ENDIF
 \ENDIF
 \STATE $\bar{x}_k = x_k, ~~ \bar{r}_k = r_k$
 \FOR{$j=1,\dots,m_k$}
  \STATE $\gamma = \left(v_{k-m_k+j}^\mathrm{T}\bar{r}_k\right)/\left(v_{k-m_k+j}^\mathrm{T}q_{k-m_k+j}\right)$
  \STATE $\bar{x}_k = \bar{x}_k-p_{k-m_k+j}\gamma, ~~ \bar{r}_k = \bar{r}_k-q_{k-m_k+j}\gamma$
 \ENDFOR
 \STATE $x_{k+1} = \bar{x}_k+\beta_k\bar{r}_k$ 
  \STATE $m_{k+1} = m_k+1$ 
\ENDFOR
\STATE \textbf{return} $ x_k $
\end{algorithmic}
\end{algorithm}

 The first condition \eqref{ineq:cond1} limits the size of the historical sequences, which plays an important role in bounding the accumulated high-order errors in the convergence analysis. The second condition \eqref{ineq:cond2} ensures the nonsingularity of $V_k^\mathrm{T}Q_k$ as long as $v_{k-m_k+1}^\mathrm{T}q_{k-m_k+1} \neq 0$. This is because $V_k^\mathrm{T}Q_k$ is lower triangular and the diagonal elements
 $\lbrace v_j^\mathrm{T}q_j\rbrace_{j=k-m_k+1}^k$ are nonzero due to \eqref{ineq:cond2}. Also, \eqref{ineq:cond2} controls the condition number of $V_k^\mathrm{T}Q_k$ by the following lower bound:
 \begin{equation}
  \frac{\vert v_{k-m_k+1}^\mathrm{T}q_{k-m_k+1} \vert}{\vert v_k^\mathrm{T}q_k \vert} 
  = \frac{\vert e_1^\mathrm{T}V_k^\mathrm{T}Q_k e_1 \vert}{\vert e_{m_k}^\mathrm{T}V_k^\mathrm{T}Q_k e_{m_k} \vert} \leq \|V_k^\mathrm{T}Q_k\|_2 \|\left( V_k^\mathrm{T}Q_k \right)^{-1}\|_2, 
 \end{equation}
 where $e_j$ denotes the $j$-th column of the identity matrix $I_{m_k}$.  Thus, a too-small $\vert v_k^\mathrm{T}q_k\vert$ can cause numerical instability and we have to restart the AM method. The third condition \eqref{ineq:cond3} is to control the growth degree of the residuals, which avoids the problematic behaviour of AM and can be seen as a safeguard condition. Moreover, as shown in our proof, the conditions \eqref{ineq:cond1}-\eqref{ineq:cond3} can lead to the boundedness of the extrapolation coefficients, which is a critical assumption in  \cite{toth2015convergence}.


 \section{Convergence analysis}   \label{sec:convergence}
 In this section, we give a unified convergence analysis for the restarted AM methods described in \Cref{alg:restartedAM}. 
 We first recall the relationship between AM methods and the Krylov subspace methods for solving linear systems. 
 Let $x_k^{\rm A}$ and $x_k^{\rm G}$ denote the $k$-th iterate of Arnoldi's method  \cite{saad1981krylov} and the $k$-th iterate of GMRES \cite{saad1986gmres}, respectively. We summarize the results if \eqref{eq:fixed-point} is linear.
 \begin{proposition}  \label{prop:linear}
 Consider the fixed-point problem \eqref{eq:fixed-point} with $g(x) =(I-A)x+b$, where $A\in\mathbb{R}^{d\times d}$ is nonsingular and $b\in\mathbb{R}^{d}$. Let  $\lbrace x_k\rbrace$ be the sequence generated by the full-memory Type-I/Type-II AM method with nonzero mixing parameters. If $\det(Z_j^\mathrm{T}R_j) \neq 0$ for $j=1,\dots,k$, then the following relations hold:
  \begin{enumerate}
   \item $R_k = -AX_k, ~ {\rm range}(X_k) = \mathcal{K}_k(A,r_0)$;   \label{prop:linear:property1}
   \item for the Type-I AM method, $\bar{x}_k = x_k^{\rm A}$ provided that $x_0 = x_0^{\rm A}$;  \label{prop:linear:property2}
   \item for the Type-II AM method, $\bar{x}_k = x_k^{\rm G}$ provided that $x_0 = x_0^{\rm G}$.   \label{prop:linear:property3}
  \end{enumerate}
 Furthermore, if $A$ is positive definite and $r_j \neq 0$, $j=0,\dots,k$, then $\det(Z_j^\mathrm{T}R_j) \neq 0, $ $j=1,\dots,k$;  the constructions of the modified historical sequences $P_k$ and $Q_k$ are well-defined, and 
 \begin{equation}
   Q_k = -AP_k, ~~  {\rm range}(P_k) = {\rm range}(X_k) = \mathcal{K}_k(A,r_0).   \label{prop:linear:modified_seqs}
 \end{equation}
 \end{proposition}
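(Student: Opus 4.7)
The plan is to exploit the linearity of $g$ and reduce everything to identities about Krylov subspaces. Since $g(x)=(I-A)x+b$, we have $r_j = b - Ax_j$, so $\Delta r_j = -A\,\Delta x_j$ column-by-column, giving $R_k = -AX_k$ at once. For ${\rm range}(X_k) = \mathcal{K}_k(A,r_0)$, I would induct on $k$. The base case follows from $m_0=0$: then $G_0 = \beta_0 I$ in~\eqref{eq:am_update}, hence $\Delta x_0 = \beta_0 r_0$ and ${\rm range}(X_1) = {\rm span}\{r_0\}$. For the inductive step, substituting $R_k = -AX_k$ into~\eqref{eq:am_update} gives
\[
\Delta x_k = \beta_k r_k - (I-\beta_k A)\,X_k\,(Z_k^{\mathrm{T}}R_k)^{-1}\,Z_k^{\mathrm{T}}r_k,
\]
and $r_k = r_0 - A(x_k-x_0) \in \mathcal{K}_{k+1}(A,r_0)$ by the inductive hypothesis applied to $x_k - x_0 \in {\rm range}(X_k)$. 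Each term on the right then lies in $\mathcal{K}_{k+1}(A,r_0)$, so ${\rm range}(X_{k+1}) \subseteq \mathcal{K}_{k+1}(A,r_0)$; the reverse inclusion follows by a dimension count, since $\det(Z_j^{\mathrm{T}}R_j)\neq 0$ forces the columns of $X_{k+1}$ to be linearly independent.

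With Part~1 in hand, Parts~2 and~3 become projection characterizations. Telescoping yields $x_k - x_0 = \sum_{j<k}\Delta x_j \in \mathcal{K}_k(A,r_0)$, which combined with $X_k\Gamma_k \in {\rm range}(X_k)$ gives $\bar{x}_k - x_0 \in \mathcal{K}_k(A,r_0)$. For Type-I AM, the defining relation $\bar{r}_k \perp {\rm range}(X_k) = \mathcal{K}_k(A,r_0)$ is exactly the Galerkin condition uniquely characterizing the Arnoldi/FOM iterate (well-posed because $\det(X_k^{\mathrm{T}}R_k)\neq 0$), so $\bar{x}_k = x_k^{\rm A}$. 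For Type-II AM, $\bar{r}_k \perp {\rm range}(R_k) = A\mathcal{K}_k(A,r_0)$ is the Petrov--Galerkin condition that characterizes GMRES as the minimizer of $\|b-Ax\|_2$ over $x_0 + \mathcal{K}_k(A,r_0)$, so $\bar{x}_k = x_k^{\rm G}$.

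Part~4 amounts to removing the $\det(Z_j^{\mathrm{T}}R_j)\neq 0$ hypothesis under positive definiteness of $A$, and this reduces to showing $X_j$ has full column rank for each $j\leq k$. Indeed, $y^{\mathrm{T}} X_j^{\mathrm{T}} A X_j\,y = (X_jy)^{\mathrm{T}} \mathcal{S}(A)(X_jy) > 0$ whenever $X_jy\neq 0$, giving $\det(-X_j^{\mathrm{T}} A X_j) \neq 0$ for Type-I and $\det(X_j^{\mathrm{T}} A^{\mathrm{T}} A X_j) \neq 0$ for Type-II. I would prove full column rank by induction on $j$: the decomposition $\Delta x_j = \beta_j \bar{r}_j - X_j\Gamma_j$ shows $\Delta x_j \in {\rm range}(X_j)$ forces $\bar{r}_j \in \mathcal{K}_j(A,r_0)$, and combined with $\bar{r}_j \perp {\rm range}(Z_j)$ this forces $\bar{r}_j = 0$ (Type-I: $\bar{r}_j \perp \bar{r}_j$; Type-II: $\bar{r}_j^{\mathrm{T}} A \bar{r}_j = 0$ together with positive definiteness of $\mathcal{S}(A)$), whence $r_{j+1} = (I-\beta_j A)\bar{r}_j = 0$ contradicts $r_{j+1}\neq 0$. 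With $\det(Z_j^{\mathrm{T}} R_j)\neq 0$ thus secured, the procedure~\eqref{eq:q_k_type2}--\eqref{eq:r_k_type2} is well defined (the denominators equal $-p_i^{\mathrm{T}} A p_i$ or $\|q_i\|_2^2$, both nonzero), and \Cref{prop:modified_seqs} supplies a unit upper triangular $S_k$ so that $X_k = P_k S_k$ combined with $R_k = -AX_k$ yields $Q_k = -AP_k$ and ${\rm range}(P_k) = {\rm range}(X_k) = \mathcal{K}_k(A,r_0)$. The main obstacle is precisely this last induction, since the positive-definiteness argument must be run uniformly across both AM types and carefully interleaved with the simultaneous induction on $\det(Z_j^{\mathrm{T}} R_j) \neq 0$.
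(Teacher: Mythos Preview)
Your proposal is correct and follows essentially the same approach as the paper's proof: induction on $k$ for the Krylov subspace identity, reduction of $\bar r_k\perp{\rm range}(Z_k)$ to the Galerkin/Petrov--Galerkin conditions characterizing Arnoldi and GMRES, and for the positive-definite case an inductive contradiction showing that rank deficiency of $X_{j+1}$ forces $\bar r_j=0$ and hence $r_{j+1}=0$. Your handling of the Type-I case in Part~4 (using $\bar r_j\in\mathcal K_j$ together with $\bar r_j\perp\mathcal K_j$ directly) is slightly cleaner than the paper's, which instead writes $\bar r_{k-1}=X_{k-1}\xi$ and argues $\xi=0$ via $\xi^{\mathrm T}X_{k-1}^{\mathrm T}Z_{k-1}\xi=0$; otherwise the two proofs coincide.
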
 
 
 We give the proof in \Cref{subsec:proof:prop:linear}.   Properties~\ref{prop:linear:property1}-\ref{prop:linear:property3} are known  results \cite{walker2011anderson}. 
 \Cref{prop:modified_seqs} and \Cref{prop:linear} establish the relationship between the restarted AM and Krylov subspace methods in the linear case. 
 
Now, we study the convergence properties of the restarted AM for solving nonlinear problems. Rewriting the fixed-point problem \eqref{eq:fixed-point} as $h(x):=x-g(x) = 0$, we make the following assumptions on $h$:
\begin{assumption}   \label{assum:h}
 (i) There exists $x^*$ such that $h(x^*)=0$; (ii) $h$ is Lipschitz continuously differentiable in a neighbourhood of $x^*$;
 (iii) The Jacobian $h'(x^*)$ is positive definite, i.e., all the eigenvalues of $\mathcal{S}(h'(x^*))$ are positive. 
\end{assumption}
 

From \cref{assum:h}, there exist positive constants $\hat{\rho}, \hat{\kappa}, \mu,$ and $L$ such that for all $x\in \mathcal{B}_{\hat{\rho}}(x^*):= \lbrace z\in\mathbb{R}^d\vert \|z-x^*\|_2\leq \hat{\rho}\rbrace$, the following relations hold:
 \begin{align}
  & \mu\|y\|_2 \leq \|h'(x)y\|_2 \leq L\|y\|_2,  \ \ \forall y\in\mathbb{R}^d;   \label{assum:jacobian} \\
  & \mu\|y\|_2^2 \leq y^\mathrm{T}h'(x)y \leq L\|y\|_2^2,  \ \ \ \forall y\in\mathbb{R}^d;   \label{assum:hermite_part}  \\
  & \|h(x)-h'(x^*)(x-x^*)\|_2 \leq \frac{1}{2}\hat{\kappa}\|x-x^*\|_2^2.  \label{assum:jacobian_continue} 
 \end{align} 
 
 Inspired by the proofs of the restarted conjugate gradient methods \cite{cohen1972rate,lenard1976convergence} and the cyclic Barzilai-Borwein method \cite{dai2006cyclic}, we establish the convergence properties of the restarted AM methods from their properties in the linear problems. To achieve this goal, 
 we first introduce the local linear model of $h$ around $x^*$: 
 \begin{equation}
  \hat{h}(x) = h'(x^*)(x-x^*), 
\end{equation}  
 which deviates from $h(x)$ by at most a second-order term $\frac{1}{2}\hat{\kappa}\|x-x^*\|_2^2$ in $\mathcal{B}_{\hat{\rho}}(x^*)$ from \eqref{assum:jacobian_continue}. 
 Then we construct two sequences of iterates  $\lbrace x_k\rbrace$ and $\lbrace \hat{x}_k \rbrace$, which are associated with solving $h(x)=0$ and $\hat{h}(x)=0$, respectively.
 \begin{definition}  \label{definition:two_processes}
 Let the mixing parameters $\{\beta_k\}$ satisfy $\beta\leq|\beta_k|\leq \beta'$ for positive constants $\beta$ and $\beta'$.  
 The sequences $\lbrace x_k\rbrace$ and $\lbrace \hat{x}_k \rbrace$ are generated by two processes:
 
 (i) Process~I: Solve the fixed-point problem \eqref{eq:fixed-point} with the restarted Type-I/Type-II AM method (see \cref{alg:restartedAM}), and the resulting sequence is $\lbrace x_k\rbrace$. 
 
 (ii) Process~II: In each interval between two successive restarts in Process~I, apply the full-memory Type-I/Type-II AM with modified historical sequences to solve the linear system $\hat{h}(x) = 0$.  Specifically, let $m_k$ and $\beta_k$ be the same ones in Process~I and define $\hat{r}_k = -\hat{h}(\hat{x}_k)$. The iterates are given as follows:
  \begin{equation}
 \begin{aligned}
   & \hat{x}_k = x_k,  ~~ \mbox{and} ~~ \hat{x}_{k+1} = \hat{x}_k+\beta_k\hat{r}_k, ~~ \mbox{if} ~~ m_k = 0;   \\
   & \hat{x}_{k+1} = \hat{x}_k + \beta_k\hat{r}_k-\left(\hat{P}_k+\beta_k\hat{Q}_k\right)\hat{\Gamma}_k,  ~~ \mbox{if} ~~ m_k>0,    \label{eq:process2}
 \end{aligned}
 \end{equation}
 where $\hat{\Gamma}_k$ is chosen such that $\hat{r}_k-\hat{Q}_k\hat{\Gamma}_k \perp {\rm range}(\hat{V}_k)$. Here $\hat{P}_k = (\hat{p}_{k-m_k+1},\dots,\hat{p}_k)$ and $\hat{Q}_k = (\hat{q}_{k-m_k+1},\dots,\hat{q}_k)$ are the modified historical sequences. Let $\hat{V}_k = \hat{P}_k$ if the Type-I method is used in Process~I, and $\hat{V}_k = \hat{Q}_k$ if the Type-II method is used in Process~I. Then, $\hat{p}_k = \Delta \hat{x}_{k-1}$, 
 $\hat{q}_k = \Delta \hat{r}_{k-1}$, if $m_k = 1$; $\hat{p}_k = \Delta \hat{x}_{k-1}-\hat{P}_{k-1}\hat{\zeta}_k$, $\hat{q}_k = \Delta \hat{r}_{k-1}-\hat{Q}_{k-1}\hat{\zeta}_k$, if $m_k\geq 2$, where $\hat{\zeta}_k$ is chosen such that $ \hat{q}_k \perp {\rm range}(\hat{V}_{k-1})$. 
 \end{definition}
 
  The next lemma compares the outputs of the above two processes. 
\begin{lemma}  \label{lemma:diff}
 Suppose that \cref{assum:h} holds for the fixed-point problem \eqref{eq:fixed-point}.  For the sequences $\lbrace x_k \rbrace$ and $\lbrace \hat{x}_k \rbrace$ in \cref{definition:two_processes}, if $x_0$ is sufficiently close to $x^*$ and $\|h(x_j)\|_2\leq \eta_0\|h(x_0)\|_2$, $j=0,\dots,k$, where $\eta_0>0$ is a constant, then 
  \begin{align}
  & \|r_k - \hat{r}_k\|_2 = \hat{\kappa}\cdot\mathcal{O}(\|x_{k-m_k}-x^*\|_2^2),  
  \label{lemma:diff:r_k}    \\
 & \| x_{k+1} - \hat{x}_{k+1}\|_2 = \hat{\kappa}\cdot\mathcal{O}(\|x_{k-m_k}-x^*\|_2^2). 
 \label{lemma:diff:x_k}
 \end{align}
\end{lemma}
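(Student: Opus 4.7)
The plan is to prove both estimates simultaneously by induction on $k$ within each restart cycle $[k_0,k_1]$ (where $m_{k_0}=0$). At a restart point, Process~II is re-initialized by $\hat{x}_{k_0}=x_{k_0}$, so the two trajectories coincide; the lemma therefore measures the second-order drift accumulated during one cycle. Before doing the induction, I would collect two uniform controls that the restart conditions deliver. First, by \eqref{ineq:cond3} together with $\mu\|x-x^*\|_2\leq\|h(x)\|_2\leq L\|x-x^*\|_2$ from \eqref{assum:jacobian}, every $x_j$ in the cycle satisfies $\|x_j-x^*\|_2\leq C\|x_{k_0}-x^*\|_2$ for a constant $C=C(\mu,L,\eta,\eta_0)$; in particular all iterates stay in $\mathcal{B}_{\hat\rho}(x^*)$ when $x_0$ is close enough. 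Second, using the explicit formulas \eqref{eq:q_k_type2}--\eqref{eq:r_k_type2}, the restart conditions \eqref{ineq:cond1}--\eqref{ineq:cond2} produce a chain $|v_{k-m_k+j}^\mathrm{T}q_{k-m_k+j}|\geq \tau^{j-1}|v_{k-m_k+1}^\mathrm{T}q_{k-m_k+1}|$ bounded below on the scale of $\|r_{k_0}\|_2^2$, while numerators are bounded above by $\|r_{k_0}\|_2^2$; this yields a uniform bound $|\Gamma_k^{(j)}|+|\zeta_k^{(j)}|\leq M$ independent of $k$.

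For the residual bound \eqref{lemma:diff:r_k}, since $r_k=-h(x_k)$ and $\hat r_k=-\hat h(\hat x_k)$, I would split
\begin{equation*}
r_k-\hat r_k \;=\; -\bigl[h(x_k)-\hat h(x_k)\bigr]\;-\;h'(x^*)(x_k-\hat x_k).
\end{equation*}
The first term is at most $\tfrac12\hat\kappa\|x_k-x^*\|_2^2$ by \eqref{assum:jacobian_continue}, and the second is bounded by $L\|x_k-\hat x_k\|_2$. Feeding in the inductive estimate $\|x_k-\hat x_k\|_2=\hat\kappa\cdot\mathcal{O}(\|x_{k_0}-x^*\|_2^2)$ and the uniform control $\|x_k-x^*\|_2=\mathcal{O}(\|x_{k_0}-x^*\|_2)$ closes \eqref{lemma:diff:r_k} at step $k$.

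For \eqref{lemma:diff:x_k} I would propagate the difference through \eqref{am_update} and \eqref{eq:process2}. The columns of the modified history are built by the identical recursions \eqref{eq:q_k_update}, \eqref{eq:q_k_type2} applied to the two processes, so an inner induction shows $\|p_j-\hat p_j\|_2+\|q_j-\hat q_j\|_2=\hat\kappa\cdot\mathcal{O}(\|x_{k_0}-x^*\|_2^2)$: each new pair differs from the preceding one only through differences of residuals (handled by \eqref{lemma:diff:r_k}) and through differences of $\zeta$-coefficients, which in turn are ratios of inner products with bounded numerator errors and denominators uniformly bounded below by the previous paragraph. The same ratio argument applied to \eqref{eq:r_k_type2} yields $|\Gamma_k^{(j)}-\hat\Gamma_k^{(j)}|=\hat\kappa\cdot\mathcal{O}(\|x_{k_0}-x^*\|_2^2)$. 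Substituting all of these into the update $x_{k+1}-\hat x_{k+1}=(x_k-\hat x_k)+\beta_k(r_k-\hat r_k)-\sum_j(p_{k-m_k+j}-\hat p_{k-m_k+j}+\beta_k(q_{k-m_k+j}-\hat q_{k-m_k+j}))\Gamma_k^{(j)}-\sum_j(\hat p_{k-m_k+j}+\beta_k\hat q_{k-m_k+j})(\Gamma_k^{(j)}-\hat\Gamma_k^{(j)})$ and using $|\beta_k|\leq\beta'$, $m_k\leq m$ yields \eqref{lemma:diff:x_k}. The main obstacle is the nested bookkeeping in this last step: because the $\Gamma$'s and $\zeta$'s are defined through division, perturbation errors couple multiplicatively, so one must carefully verify that the denominators $\hat v_{k-m_k+j}^\mathrm{T}\hat q_{k-m_k+j}$ in Process~II remain bounded away from zero at the same rate as their unhatted counterparts — which follows from the induction itself once one absorbs the $\mathcal{O}(\hat\kappa\|x_{k_0}-x^*\|_2^2)$ perturbation into $\tau$. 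Condition \eqref{ineq:cond1} caps the depth of the induction at $m$, keeping the accumulated constants finite.
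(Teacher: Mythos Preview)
Your plan is essentially the same as the paper's: an outer induction on $k$ within a restart cycle, coupled with an inner induction over the Gram--Schmidt-like recursions \eqref{eq:q_k_type2}--\eqref{eq:r_k_type2}, using \eqref{ineq:cond2} to lower-bound the denominators $|v_{k-m_k+j}^{\mathrm T}q_{k-m_k+j}|$ on the scale $\|x_{k_0}-x^*\|_2^2$ and \eqref{ineq:cond3} to keep all iterates at distance $\mathcal{O}(\|x_{k_0}-x^*\|_2)$ from $x^*$. The residual splitting and the final assembly of $x_{k+1}-\hat x_{k+1}$ are also the paper's.

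One bookkeeping point needs correction. You state $|\Gamma_k^{(j)}-\hat\Gamma_k^{(j)}|=\hat\kappa\cdot\mathcal{O}(\|x_{k_0}-x^*\|_2^2)$, but the ratio argument actually gives only \emph{first} order here: the numerator $v_{k-m_k+j}^{\mathrm T}r_k^{j-1}$ is $\mathcal{O}(\|x_{k_0}-x^*\|_2^2)$, its perturbation is $\mathcal{O}(\|x_{k_0}-x^*\|_2^3)$ (a first-order vector times a second-order perturbation, plus vice versa), and dividing by a denominator of exact order $\|x_{k_0}-x^*\|_2^2$ yields $|\Gamma_k^{(j)}-\hat\Gamma_k^{(j)}|=\hat\kappa\cdot\mathcal{O}(\|x_{k_0}-x^*\|_2)$; the same holds for $\zeta_k^{(j)}-\hat\zeta_k^{(j)}$. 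This does not break your assembly, because in the term $\sum_j(\hat p_{k-m_k+j}+\beta_k\hat q_{k-m_k+j})(\Gamma_k^{(j)}-\hat\Gamma_k^{(j)})$ the vectors $\hat p_{k-m_k+j},\hat q_{k-m_k+j}$ are themselves $\mathcal{O}(\|x_{k_0}-x^*\|_2)$ (you need to carry this as part of the inner induction, as the paper does in its relation \eqref{lemma:p_k}), so the product is still second order. Just make sure you explicitly track $\|p_j\|_2,\|q_j\|_2=\mathcal{O}(\|x_{k_0}-x^*\|_2)$ rather than merely $\mathcal{O}(1)$; otherwise that last sum would only be first order and the argument would not close.
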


 The proof is given in \Cref{subsec:proof:lemma:diff} due to space limitations. 
 Since Process~II is closely related to  Krylov subspace methods from \Cref{prop:linear},  \cref{lemma:diff} extends this relationship to the nonlinear case. 
  When certain assumptions hold, $\|x_k-\hat{x}_k\|_2$ is bounded by a second-order term. Intuitively, we can obtain the convergence of $\lbrace x_k \rbrace$ for nonlinear problems from the convergence of $\lbrace \hat{x}_k \rbrace$ for the corresponding linear problems. If $\lbrace \hat{x}_k \rbrace$ converges linearly (not quadratically), it is expected that $\lbrace x_k \rbrace$ has a similar convergence rate to $\lbrace \hat{x}_k \rbrace$ provided that $x_0$ is sufficiently close to $x^*$. 

 \begin{theorem}   \label{them:am}
  Suppose that \cref{assum:h} holds for the fixed-point problem \eqref{eq:fixed-point}. 
  Let $\lbrace x_k \rbrace$ and $\lbrace r_k \rbrace$ denote the iterates and residuals of the restarted AM, $A:=I-g'(x^*)$, $\theta_k := \|I-\beta_kA\|_2 $, and $\eta_0>0$ is a constant. We assume  $\beta_j\in[\beta,\beta']$ $(j\geq 0)$ for some positive constants $\beta$ and  $\beta'$.
  The following results hold.
  
 1. For the Type-I AM, let $\pi_k$ be the orthogonal projector onto $\mathcal{K}_{m_k}(A,r_{k-m_k})$ and $A_k := \pi_kA\pi_k$. $A_k\vert_{\mathcal{K}_{m_k}(A,r_{k-m_k})}$ denotes the restriction of $A_k$ to $\mathcal{K}_{m_k}(A,r_{k-m_k})$. If $\|r_j\|_2\leq \eta_0\|r_0\|_2$ $(0\leq j\leq k)$ and $x_0$ is sufficiently close to $x^*$, then  
  \begin{equation}
   \|x_{k+1}-x^*\|_2 \leq \theta_k\sqrt{1+\gamma_k^2\kappa_k^2}\min_{\mathop{}_{p(0)=1}^{p\in \mathcal{P}_{m_k}}}\|p(A)(x_{k-m_k}-x^*)\|_2 + \hat{\kappa}\mathcal{O}(\|x_{k-m_k}-x^*\|_2^2),  \label{them:type1}
  \end{equation}
  where $\gamma_k = \|\pi_kA(I-\pi_k)\|_2 \leq L$, and $\kappa_k = \|( A_k\vert_{\mathcal{K}_{m_k}(A,r_{k-m_k})})^{-1}\|_2 \leq 1/\mu$. 
  
 2. For the Type-II AM, if $\|r_j\|_2\leq \eta_0\|r_0\|_2$ $(0\leq j\leq k+1)$ and $x_0$ is sufficiently close to $x^*$, then 
 \begin{align}
  \|r_{k+1}\|_2 \leq \theta_k \min_{\mathop{}_{p(0)=1}^{p\in \mathcal{P}_{m_k}}}\|p(A)r_{k-m_k}\|_2 + \hat{\kappa}\mathcal{O}(\|x_{k-m_k}-x^*\|_2^2).  \label{them:type2}
 \end{align}  
Alternatively, letting $\theta\in \bigl[\bigl(1-\frac{\mu^2}{L^2}\bigr)^{1/2},1\bigr)$ be a constant, if $\theta_j=\|I-\beta_jA\|_2 \leq \theta$ $(j\geq 0)$ and $x_0$ is sufficiently close to $x^*$, then \eqref{them:type2} holds. 

 
 3. For either method, if the aforementioned assumptions hold and $m_k = d$, then  $\|x_{k+1}-x^*\|_2 = \hat{\kappa}\mathcal{O}(\|x_{k-m_k}-x^*\|_2^2)$, namely, $(d+1)$-step quadratic convergence. 
 \end{theorem}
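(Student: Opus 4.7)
The plan is to reduce the nonlinear convergence analysis to the linear one via the pair of processes in \cref{definition:two_processes}. Fix a restart window so that $k_0:=k-m_k$ is the most recent restart index, giving $\hat{x}_{k_0}=x_{k_0}$. Between $k_0$ and the next restart, Process~II runs full-memory Type-I/Type-II AM with modified sequences on the linearized equation $\hat{h}(x)=A(x-x^*)=0$, where $A=h'(x^*)=I-g'(x^*)$. By \cref{prop:linear}, the pre-mixing iterate $\bar{\hat{x}}_k$ coincides with the $m_k$-th Arnoldi iterate (Type-I) or GMRES iterate (Type-II) initialized at $x_{k_0}$. Since $\bar{\hat{r}}_k=-A(\bar{\hat{x}}_k-x^*)$ in the linear model, the mixing step yields
\[
\hat{x}_{k+1}-x^* = (I-\beta_kA)(\bar{\hat{x}}_k-x^*),\qquad \hat{r}_{k+1} = (I-\beta_kA)\bar{\hat{r}}_k.
\]
\cref{lemma:diff} then transfers back to the nonlinear iterates with an $\hat{\kappa}\cdot\mathcal O(\|x_{k-m_k}-x^*\|_2^2)$ remainder.

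For Part~1 (Type-I), I invoke the standard Arnoldi error bound starting from $\hat{x}_{k_0}=x_{k-m_k}$:
\[
\|\bar{\hat{x}}_k-x^*\|_2 \le \sqrt{1+\gamma_k^2\kappa_k^2}\min_{\substack{p\in\mathcal P_{m_k}\\p(0)=1}}\|p(A)(x_{k-m_k}-x^*)\|_2.
\]
The estimate $\gamma_k\le L$ follows from $\|A\|_2\le L$ and $\|\pi_k\|_2\le 1$, while $\kappa_k\le 1/\mu$ follows from $y^\mathrm{T}A_ky=y^\mathrm{T}Ay\ge\mu\|y\|_2^2$ on $\mathcal K_{m_k}(A,r_{k-m_k})$ using \eqref{assum:hermite_part}. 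Multiplying by $I-\beta_kA$ and adding the $\hat\kappa$ remainder from \cref{lemma:diff} produces \eqref{them:type1}. (The fact that the Arnoldi subspace in Process~II is $\mathcal K_{m_k}(A,\hat r_{k-m_k})$ rather than $\mathcal K_{m_k}(A,r_{k-m_k})$ introduces only an $\mathcal O(\|x_{k-m_k}-x^*\|_2^2)$ perturbation of $\pi_k$, again absorbed into the remainder.)

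For Part~2 (Type-II), the GMRES residual identity gives $\|\bar{\hat{r}}_k\|_2=\min_p\|p(A)\hat{r}_{k-m_k}\|_2$. At the restart, $\hat{r}_{k-m_k}=-A(x_{k-m_k}-x^*)$ whereas $r_{k-m_k}=-h(x_{k-m_k})$, so Taylor expansion under \eqref{assum:jacobian_continue} yields $\hat{r}_{k-m_k}=r_{k-m_k}+\hat{\kappa}\mathcal O(\|x_{k-m_k}-x^*\|_2^2)$. Evaluating the minimization at the minimizer $p^*$ for $r_{k-m_k}$ and exploiting that $\|p^*(A)\|_2$ is uniformly bounded (a consequence of $m_k\le m$ and the compact spectrum of $A$ away from zero) transfers the bound to $r_{k-m_k}$; one more multiplication by $I-\beta_kA$ combined with \cref{lemma:diff} yields \eqref{them:type2}. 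For the alternative hypothesis $\theta_j\le\theta<1$, I would proceed by induction: taking $p\equiv 1$ in \eqref{them:type2} together with the safeguard \eqref{ineq:cond3} and contraction factor $\theta$ forces geometric decay of $\|r_j\|_2$ across each restart boundary, so the a priori bound $\|r_j\|_2\le\eta_0\|r_0\|_2$ becomes automatic and all iterates remain in $\mathcal B_{\hat\rho}(x^*)$.

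For Part~3, when $m_k=d$ the Cayley--Hamilton theorem supplies $p(t)=\det(tI-A)/\det(-A)\in\mathcal P_d$ with $p(0)=1$ and $p(A)=0$, which is admissible since $A$ is nonsingular by \cref{assum:h}. Both minima in \eqref{them:type1} and \eqref{them:type2} therefore vanish, leaving only the $\hat{\kappa}\mathcal O(\|x_{k-m_k}-x^*\|_2^2)$ term, which is the claimed $(d+1)$-step quadratic convergence. For Type-II one converts the residual estimate into an iterate estimate via $\|r_{k+1}\|_2\ge (\mu-\hat\kappa\|x_{k+1}-x^*\|_2)\|x_{k+1}-x^*\|_2$. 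The main obstacle is Part~2: one must control $\|p^*(A)\|_2$ uniformly so the Taylor error propagates cleanly through the Krylov minimization, and must execute the inductive argument for the alternative hypothesis while simultaneously preserving the local-neighbourhood and residual-ratio assumptions across every restart boundary.
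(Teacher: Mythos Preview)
Your plan is essentially the paper's: linearize via Process~II, identify $\bar{\hat x}_k$ with the Arnoldi/GMRES iterate through \cref{prop:linear}, apply the standard Krylov bounds, push through the mixing step $(I-\beta_kA)$, and close with \cref{lemma:diff}. Two points of divergence are worth noting.

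For Part~3 the paper is more direct than Cayley--Hamilton: once $m_k=d$, Process~II is a full-memory Krylov method on the linear system $\hat h(x)=0$ and hence terminates exactly, so $\hat x_{k+1}=x^*$; then \cref{lemma:diff} immediately gives $\|x_{k+1}-x^*\|_2=\hat\kappa\,\mathcal O(\|x_{k-m_k}-x^*\|_2^2)$ for both types, with no residual-to-iterate conversion needed.

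For the transfer in Part~2, your stated justification that $\|p^*(A)\|_2$ is uniformly bounded ``as a consequence of $m_k\le m$ and the compact spectrum of $A$ away from zero'' is not sufficient as written: the GMRES polynomial depends on the starting vector, and even for diagonal $A$ with spectrum in $[\mu,L]$ one can make $\|p^*(A)\|$ of order $(L/\mu)^{m}$ by choosing $r$ supported on a few eigendirections. The paper handles this step by invoking the fixed polynomial $p_0(A)=(I-\tfrac{\mu}{L^2}A)^{m_k}$, for which \cref{lemma:theta_k} gives $\|p_0(A)\|_2\le 1$, so that the second-order perturbation $\hat r_{k-m_k}-r_{k-m_k}$ passes through with a unit constant rather than through the (potentially large) $\|p^*(A)\|_2$. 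You correctly flag this as the main obstacle; just be aware that your proposed resolution needs a sharper argument than spectral compactness alone.
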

 
 The proof is given in \Cref{subsec:proof:them:am}, which is based on  \Cref{lemma:diff}, \Cref{prop:linear}, and the convergence properties of Krylov subspace methods. 
 Results \eqref{them:type1} and \eqref{them:type2} characterize the long-term convergence behaviours of both restarted AM methods for solving nonlinear equations $h(x) = 0$, where $h$ satisfies \Cref{assum:h}. 
 
 \begin{remark}
 The assumption that $x_0$ is sufficiently close to $x^*$ is common for the local analysis of an iterative method \cite{toth2015convergence,kelley2018numerical,brezinski2020shanks}. 
 Similar to \cite{dai2006cyclic},  
 since an explicit bound for $\|x_0-x^*\|_2$ is rather cumbersome and not very useful in practice, we omit it here for conciseness. 
 Besides, we do not assume $g$ to be contractive here.  
 The critical point is the positive definiteness of the Jacobian $h'(x^*)$,  without which there is no convergence guarantee even for solving linear systems \cite{walker2011anderson,potra2013characterization,Embree2003the}. 
 \end{remark}
 
 \begin{remark}
 If $m_k$ is large and $x_0$ is sufficiently close to $x^*$, 
  the convergence rates of both restarted AM methods are dominated by the minimization problems in \eqref{them:type1} and \eqref{them:type2},  
  which have been extensively studied in the context of Krylov subspace methods   \cite{saad1981krylov,eisenstat1983variational,Greenbaum1994,saad2003iterative}.
 For $j\geq 0$, define $u_j=x_j-x^*$ for the Type-I method, and $u_j=r_j$ for the Type-II method. 
 Note that 
 $ \left\|I-\frac{\mu}{L^2}A\right\|_2 \leq \theta:=\bigl(1-\frac{\mu^2}{L^2}\bigr)^{1/2}$ (see \Cref{lemma:theta_k} in \Cref{subsec:proof:them:am}).  
Choosing  $ p(A) = \left( I-\frac{\mu}{L^2}A\right)^{m_k}$,
 it follows that 
 \begin{equation} 
  \min_{\mathop{}_{p(0)=1}^{p\in\mathcal{P}_{m_k}}}\|p(A)u_{k-m_k}\|_2 \leq \min_{\mathop{}_{p(0)=1}^{p\in\mathcal{P}_{m_k}}}\|p(A)\|_2\|u_{k-m_k}\|_2 
  \leq \theta^{m_k}\left\|u_{k-m_k}\right\|_2.  \label{ineq:minimal}
 \end{equation} 
 With more properties about $A$,  we may choose other polynomials to sharpen the upper bound in \eqref{ineq:minimal}. 
 We give a refined result in \Cref{remark:convergence_sym} when $A$ is symmetric. 
 \end{remark}

 Now, we consider the case that the fixed-point map $g$ is a contraction. Specifically, we make the following assumptions on $g$, which are similar to those in \cite{toth2015convergence,evans2020proof}. 
 \begin{assumption} \label{assum:g}
  The fixed-point map $g:\mathbb{R}^d\rightarrow\mathbb{R}^d$ has a fixed point $x^*$. In the local region $\mathcal{B}_{\hat{\rho}}(x^*):= \lbrace z\in\mathbb{R}^d\vert \|z-x^*\|_2\leq \hat{\rho}\rbrace $ for some constant $\hat{\rho}>0$, $g$ is Lipschitz continuously differentiable, and there are constants $\kappa\in (0,1)$ and $\hat{\kappa}>0$ such that
  \begin{itemize}
   \item $\|g(y)-g(x)\|_2 \leq \kappa \|y-x\|_2$ for every $x,y \in \mathcal{B}_{\hat{\rho}}(x^*)$;
   \item $\|g'(y)-g'(x)\|_2 \leq \hat{\kappa}\|y-x\|_2$ for every $x,y \in \mathcal{B}_{\hat{\rho}}(x^*)$. 
  \end{itemize}
 \end{assumption}
 
In fact, we show \cref{assum:g} is a sufficient condition for \cref{assum:h}.
 \begin{lemma}  \label{lemma:h}
  Suppose  \cref{assum:g} holds for the fixed-point problem \eqref{eq:fixed-point}. Let $h(x):=x-g(x)$. Then $h$ satisfies \cref{assum:h}. In $\mathcal{B}_{\hat{\rho}}(x^*)$, the Lipschitz constant of $h'$ is $\hat{\kappa}$; for \eqref{assum:jacobian} and \eqref{assum:hermite_part}, the constants are $\mu=1-\kappa, L=1+\kappa$. 
 \end{lemma}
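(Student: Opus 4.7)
The plan is to verify each of the three items in \Cref{assum:h} by direct calculation, using $h(x) = x - g(x)$ and $h'(x) = I - g'(x)$, and to quantify the constants in the process. The only nontrivial input is to convert the contraction estimate on $g$ into a bound on $\|g'(x)\|_2$ in the local region; once this is in hand, the rest follows from elementary norm manipulations.

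\textbf{Step 1: Bound on $\|g'(x)\|_2$.} I would first show that $\|g'(x)\|_2 \leq \kappa$ for every $x \in \mathcal{B}_{\hat{\rho}}(x^*)$. Fix such an $x$ and a unit vector $v \in \mathbb{R}^d$. For sufficiently small $t > 0$, $x + tv \in \mathcal{B}_{\hat{\rho}}(x^*)$, so the contraction hypothesis gives $\|g(x+tv)-g(x)\|_2 \leq \kappa t$. Dividing by $t$ and letting $t \to 0$ using differentiability of $g$ at $x$ yields $\|g'(x) v\|_2 \leq \kappa$, hence $\|g'(x)\|_2 \leq \kappa$.

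\textbf{Step 2: Verifying items (i) and (ii).} Item (i) is immediate: $h(x^*) = x^* - g(x^*) = 0$. For item (ii), differentiability of $h$ follows from that of $g$, and the Lipschitz constant of $h'$ in $\mathcal{B}_{\hat{\rho}}(x^*)$ is $\hat{\kappa}$ because $\|h'(y) - h'(x)\|_2 = \|g'(y) - g'(x)\|_2 \leq \hat{\kappa}\|y-x\|_2$ by the second bullet of \Cref{assum:g}.

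\textbf{Step 3: Verifying item (iii) and the constants $\mu, L$.} For any $x \in \mathcal{B}_{\hat{\rho}}(x^*)$ and $y\in\mathbb{R}^d$, the reverse and forward triangle inequalities combined with Step 1 give
\begin{equation}
(1-\kappa)\|y\|_2 \leq \|y\|_2 - \|g'(x)y\|_2 \leq \|h'(x)y\|_2 \leq \|y\|_2 + \|g'(x)y\|_2 \leq (1+\kappa)\|y\|_2,
\end{equation}
which yields \eqref{assum:jacobian} with $\mu = 1 - \kappa$ and $L = 1+\kappa$. Similarly, Cauchy--Schwarz together with Step 1 gives $|y^\mathrm{T} g'(x) y| \leq \kappa \|y\|_2^2$, so
\begin{equation}
(1-\kappa)\|y\|_2^2 \leq \|y\|_2^2 - y^\mathrm{T} g'(x) y \leq y^\mathrm{T} h'(x) y \leq \|y\|_2^2 + y^\mathrm{T} g'(x) y \leq (1+\kappa)\|y\|_2^2,
\end{equation}
giving \eqref{assum:hermite_part} with the same constants. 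Specializing to $x = x^*$, every eigenvalue of $\mathcal{S}(h'(x^*))$ lies in $[1-\kappa, 1+\kappa] \subset (0, \infty)$, establishing item (iii). Finally, \eqref{assum:jacobian_continue} is a standard consequence of the Lipschitz property of $h'$: integrating $h'$ along the segment between $x$ and $x^*$ yields $\|h(x) - h'(x^*)(x-x^*)\|_2 \leq \tfrac{1}{2}\hat{\kappa}\|x-x^*\|_2^2$.

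This proof is essentially routine; the only place requiring a small argument is Step 1, where passing from a Lipschitz bound on $g$ to a norm bound on $g'$ uses differentiability. No further obstacles are anticipated.
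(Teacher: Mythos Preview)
Your proof is correct and follows essentially the same approach as the paper: both establish $\|g'(x)\|_2 \leq \kappa$ and then deduce the bounds on $h'$ by elementary norm inequalities. The only cosmetic difference is that the paper obtains the lower bound in \eqref{assum:jacobian} via the Neumann-series estimate $\|h'(x)^{-1}\|_2 \leq 1/(1-\kappa)$ and handles \eqref{assum:hermite_part} by bounding $\|I-\mathcal{S}(h'(x))\|_2$, whereas you use the reverse triangle inequality and Cauchy--Schwarz directly; these are equivalent one-line arguments.
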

 
 The proof is given in \Cref{subsec:proof:lemma:h}.  Based on \Cref{lemma:h} and \Cref{them:am}, we obtain the following corollary for the Type-II AM. 
 
 
 \begin{corollary}   \label{coro:am_type2}
  Suppose that \cref{assum:g} holds for the fixed-point problem \eqref{eq:fixed-point}. 
  Let $\lbrace x_k \rbrace$ and $\lbrace r_k \rbrace$ denote the iterates and residuals of the restarted Type-II AM with $\beta_k=1$ $(k\geq 0)$. If $x_0$ is sufficiently close to $x^*$, then   
  \begin{align}
  \|r_{k+1}\|_2 \leq \kappa \min_{\mathop{}_{p(0)=1}^{p\in \mathcal{P}_{m_k}}}\|p(A)r_{k-m_k}\|_2 + \hat{\kappa}\mathcal{O}(\|x_{k-m_k}-x^*\|_2^2),   \label{coro:type2}
 \end{align}
 where $A:=I-g'(x^*)$. If $m_k=d$, then $\|x_{k+1}-x^*\|_2 = \hat{\kappa}\mathcal{O}(\|x_{k-m_k}-x^*\|_2^2)$. 
 \end{corollary}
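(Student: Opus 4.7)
The plan is to deduce \Cref{coro:am_type2} directly from the alternative form of \Cref{them:am}(Part~2) after translating \Cref{assum:g} into \Cref{assum:h} via \Cref{lemma:h}. Specifically, \Cref{lemma:h} furnishes \Cref{assum:h} for $h(x) = x - g(x)$ in $\mathcal{B}_{\hat{\rho}}(x^*)$ with Lipschitz constant $\hat{\kappa}$ for $h'$ and the explicit constants $\mu = 1-\kappa$ and $L = 1+\kappa$ in \eqref{assum:jacobian}--\eqref{assum:hermite_part}. Since $h'(x^*) = I - g'(x^*) = A$, the Jacobian $A$ is positive definite, and the constant mixing choice $\beta_k \equiv 1$ trivially satisfies the bound $\beta_k \in [\beta,\beta']$ required by \Cref{them:am}.

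To invoke the alternative hypothesis of \Cref{them:am}(Part~2), I would exhibit a constant $\theta \in \bigl[(1-\mu^2/L^2)^{1/2}, 1\bigr)$ with $\theta_j = \|I-\beta_j A\|_2 \leq \theta$ for every $j\geq 0$. Setting $\beta_k = 1$ gives $\theta_k = \|I-A\|_2 = \|g'(x^*)\|_2$, and $\kappa$-contractivity of $g$ on $\mathcal{B}_{\hat{\rho}}(x^*)$ forces $\|g'(x^*)\|_2 \leq \kappa$, so $\theta_k \leq \kappa$ uniformly. With $\mu = 1-\kappa$, $L = 1+\kappa$ one computes $(1-\mu^2/L^2)^{1/2} = 2\sqrt{\kappa}/(1+\kappa)$, and an elementary check gives $\kappa < 2\sqrt{\kappa}/(1+\kappa) < 1$ for every $\kappa \in (0,1)$. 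Hence the admissible interval is nonempty and $\theta_j \leq \kappa \leq \theta$ holds for, say, $\theta := (1-\mu^2/L^2)^{1/2}$.

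Applying \Cref{them:am}(Part~2) then yields, for $x_0$ sufficiently close to $x^*$,
$$\|r_{k+1}\|_2 \leq \theta_k \min_{\mathop{}_{p(0)=1}^{p\in \mathcal{P}_{m_k}}}\|p(A) r_{k-m_k}\|_2 + \hat{\kappa}\mathcal{O}(\|x_{k-m_k}-x^*\|_2^2),$$
and substituting the uniform bound $\theta_k \leq \kappa$ on the right-hand side produces \eqref{coro:type2}. The $m_k = d$ statement follows immediately from \Cref{them:am}(Part~3), whose hypotheses coincide with those just verified.

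The main obstacle is modest and essentially bookkeeping: confirm that the admissible interval $\bigl[(1-\mu^2/L^2)^{1/2},1\bigr)$ is nonempty and accommodates $\theta_k \leq \kappa$ with the constants produced by \Cref{lemma:h}. This reduces to the inequality $\sqrt{\kappa}(1+\kappa) < 2$, which holds for every $\kappa \in (0,1)$, so no additional smallness assumption on $\kappa$ is required. Once this check is made, \Cref{lemma:h} and \Cref{them:am} deliver the corollary with no further analysis.
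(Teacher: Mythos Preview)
Your proposal is correct and follows essentially the same route as the paper, which simply notes that the corollary is a consequence of \Cref{lemma:h} and \Cref{them:am}. Your explicit verification that $\theta_k=\|g'(x^*)\|_2\leq\kappa\leq 2\sqrt{\kappa}/(1+\kappa)=(1-\mu^2/L^2)^{1/2}$ is exactly the bookkeeping needed to invoke the alternative hypothesis of \Cref{them:am}(Part~2), and the paper leaves this implicit.
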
 
 
 \begin{remark}   \label{remark:convergence}
 The $R$-linear convergence of the limited-memory Type-II AM has been established in  \cite{toth2015convergence}: Under \Cref{assum:g} and assuming that $\sum_{j=0}^{m_k}|\alpha_{k}^{(j)}|$ is bounded, it is proved that for  $\tilde{\kappa}\in(\kappa,1)$, if $x_0$ is sufficiently close to $x^*$, then
 \begin{equation}
  \|r_{k}\|_2 \leq \tilde{\kappa}^{k}\|r_0\|_2.   \label{convergence:toth}
 \end{equation}
 However,
 as noted by Anderson \cite{anderson2019comments}, \eqref{convergence:toth} does not show the advantage of AM over the fixed-point iteration \eqref{eq:picard} since the latter converges $Q$-linearly with $Q$-factor $\kappa$. 
 In \cite{evans2020proof}, an improved bound is obtained: 
 \begin{equation}
  \|r_{k+1}\|_2 \leq s_k(1-\beta_k+\kappa\beta_k)\|r_k\|_2
   + \sum_{j=0}^{m}\mathcal{O}(\|r_{k-j}\|_2^2),   \label{convergence:evans}
\end{equation}  
 where $k\geq m$ and $s_k:={\|\bar{r}_k\|_2}/{\|r_k\|_2}$. If $\beta_k = 1$, \eqref{convergence:evans} improves  \eqref{convergence:toth} since $s_k\leq 1$. 
 However, the quality of extrapolation, namely $s_k$, is difficult to estimate in advance.  The recent analysis in \cite{pollock2021anderson} refines the higher-order terms in \eqref{convergence:evans}, but leaves the issue about $s_k$ unaddressed. 
 For the restarted Type-II AM, 
  \Cref{coro:am_type2} shows that its convergence rate is dominated by the first term on the right-hand side of \eqref{coro:type2}. Using $p(A) = (I-A)^{m_k}$,  \eqref{coro:type2} leads to $\|r_{k+1}\|_2 \leq \kappa^{m_k+1}\|r_{k-m_k}\|_2+\hat{\kappa}\mathcal{O}(\|x_{k-m_k}-x^*\|_2^2)$ that is comparable to the fixed-point iteration \eqref{eq:picard}. Nonetheless, due to the optimality, 
  the polynomial that minimizes $\|p(A)r_{k-m_k}\|_2$ corresponds to the $m_k$-step GMRES iterations and can often provide a much better bound than $(I-A)^{m_k}$ \cite{saad1986gmres,Greenbaum1994}, which justifies the acceleration by Type-II AM in practice. 
  Therefore,  our multi-step analysis provides a better assessment of the efficacy of Type-II AM than previous works. 
 \end{remark}
\begin{remark}
  Though the numerical experiments in \cite{de2022Linear} suggest that the limited-memory AM can converge faster than the restarted AM with the same $m$,   
  the theoretical properties of the limited-memory AM are much more vague, even in the linear case \cite{desterck2023anderson}.  We leave the analysis for the limited-memory AM as our future work.
\end{remark}

\section{Adaptive mixing strategy} \label{sec:mix_params} 
  
  As shown in \Cref{them:am}, the choice of $\beta_k$ directly affects the factor $\theta_k$ in \eqref{them:type1} and \eqref{them:type2}.  If $g$ is not contractive,  a proper $\beta_k$ is required to ensure the numerical performance of AM \cite{fang2009two,anderson2019comments}.  However, tuning $\beta_k$ with a grid search can be costly in practice. In this section, we explore the properties of restarted AM to develop an efficient procedure to estimate the eigenvalues of $h'(x^*)$, based on which we can choose $\beta_k$ adaptively. 

 We start from the linear case to better explain how to estimate the eigenvalues. Let $g(x)=(I-A)x+b$ in the fixed-point problem \eqref{eq:fixed-point}, where $A\in\mathbb{R}^{d\times d}$ is positive definite and $b\in\mathbb{R}^d$. Then $h(x)=Ax-b$. Using the historical information in the  restarted AM, we apply a projection method \cite{saad2011} to estimate the spectrum of $A$:
 \begin{equation}
 \begin{aligned}
   u \in {\rm range}(Q_k), \quad (A-\lambda I)u \perp {\rm range}(V_k),  \label{eq:eigs_projection}
 \end{aligned}
 \end{equation}
 where $u\in\mathbb{R}^d$ is an approximate eigenvector of $A$ sought in ${\rm range}(Q_k)$, and $\lambda \in \mathbb{R}$ is an eigenvalue estimate.  The orthogonality condition in \eqref{eq:eigs_projection} is known as the Petrov-Galerkin condition. 
 Let $u = Q_ky, ~ y\in \mathbb{R}^{m_k}$. Then \eqref{eq:eigs_projection} leads to 
 \begin{equation}
  V_k^\mathrm{T}AQ_k y = \lambda V_k^\mathrm{T}Q_k y.   \label{eq:eigs_reduced}
 \end{equation}
 Next, we describe how to solve the generalized eigenvalue problem \eqref{eq:eigs_reduced} using the properties of restarted AM. 
 
 At the $(k+1)$-th iteration, suppose that $m_{k+1}\geq 2$. 
   As will be shown in \cref{prop:linear:hessenberg}, there is an upper Hessenberg matrix $H_k \in \mathbb{R}^{m_k\times m_k}$ such that
 \begin{equation}
  AQ_k = Q_k H_k + q_{k+1}\cdot h_{k}^{(m_k+1)}e_{m_k}^\mathrm{T},      \label{eq:AQ_k} 
 \end{equation}
 where  $ h_{k}^{(m_k+1)}\in \mathbb{R}$ and $e_{m_k}$ is the $m_k$-th column of $I_{m_k}$.  
 Since $q_{k+1}^\mathrm{T}V_k = 0$ from the construction of $q_{k+1}$ (cf. \Cref{prop:modified_seqs}), 
  it follows from \eqref{eq:AQ_k} that $V_k^\mathrm{T}AQ_k = V_k^\mathrm{T}Q_kH_k$, which together with \eqref{eq:eigs_reduced} yields that 
 $V_k^\mathrm{T}Q_kH_ky = \lambda V_k^\mathrm{T}Q_ky$. Noting that  $\det(V_k^\mathrm{T}Q_k) \neq 0$ if the restarted AM does not reach the exact solution, we find \eqref{eq:eigs_reduced} is reduced to
 \begin{equation}
  H_k y = \lambda y,    \label{eq:eigs_H_k}
 \end{equation}
which can be solved by efficient numerical algorithms \cite{golub2013matrix} using  $\mathcal{O}(m_k^3)$ flops. 

\begin{remark}
  From  \cref{prop:linear}, ${\rm range}(Q_k) = A\mathcal{K}_{m_k}(A,r_{k-m_k})$. 
  For the Type-I method,  \eqref{eq:eigs_projection} is an oblique projection method; for the Type-II method, \eqref{eq:eigs_projection} can be viewed as the Arnoldi's method \cite{saad1980variations} based on $A^\mathrm{T}A$-norm. It is expected that with larger $m_k$, the eigenvalue estimates are closer to the exact eigenvalues of $A$. 
\end{remark} 
 
 Now, we describe the construction of $H_k$ in \Cref{definition:H_k} and show the role of $H_k$ in \Cref{prop:linear:hessenberg}.
 \begin{definition}  \label{definition:H_k}
  Consider applying the restarted AM to solve the fixed-point problem \eqref{eq:fixed-point}. At the $(k+1)$-th iteration, suppose that $m_{k+1}\geq 2$. 
  Define the unreduced upper Hessenberg matrix $\bar{H}_k = (H_k^\mathrm{T},h_k^{(m_k+1)}e_{m_k})^\mathrm{T} \in \mathbb{R}^{(m_k+1)\times m_k}$, where  $h_k^{(m_k+1)}\in \mathbb{R}$, and $e_{m_k}$ is the $m_k$-th column of $I_{m_k}$.  The $H_k\in\mathbb{R}^{m_k\times m_k}$ is defined as 
 $H_k = h_k$ if $m_k=1$ and $H_k = \left(\bar{H}_{k-1},h_k\right)$ if $m_k\geq 2$. 
  Define $\phi_k = \Gamma_k+\zeta_{k+1}$, $\Gamma_k^{[m_k-1]} = (\Gamma_k^{(1)},\dots,\Gamma_k^{(m_k-1)})^\mathrm{T}$. The $h_k\in\mathbb{R}^{m_k}$ in $H_k$ is constructed as follows:
 \begin{equation}
 \begin{aligned}
  & h_k = \frac{1}{1-\Gamma_k}\left( \frac{1}{\beta_{k-1}} - \frac{1}{\beta_k}\phi_k\right),  ~ \mbox{if} ~ m_k = 1;  \\
  & h_k = \frac{1}{1-\Gamma_k^{(m_k)}}\left( \frac{1}{\beta_{k-1}}
  \begin{pmatrix}
   \phi_{k-1} \\
   1
  \end{pmatrix} -\frac{1}{\beta_k}\phi_k - \bar{H}_{k-1}\left( \phi_{k-1}-\Gamma_k^{[m_k-1]} \right) \right),  ~ \mbox{if} ~ m_k\geq 2.   \label{eq:H_k}
 \end{aligned}
 \end{equation}
 The construction of $h_k^{(m_k+1)}$ is
 \begin{align}
  h_k^{(m_k+1)} = -\frac{1}{\beta_k(1-\Gamma_k^{(m_k)})}, ~ \mbox{for} ~ m_k\geq 1.   \label{eq:h_k_2}
 \end{align}   
 \end{definition}
 
 \begin{proposition}   \label{prop:linear:hessenberg}
  Let $g(x)=(I-A)x+b$ in the fixed-point problem \eqref{eq:fixed-point}, where $A\in\mathbb{R}^{d\times d}$ is positive definite and $b\in\mathbb{R}^d$. For the restarted Type-I/Type-II AM method, if $m_{k+1}\geq 2$ at the $(k+1)$-th iteration, then  
  with the notations defined in \Cref{definition:H_k}, we have  
 \begin{align}
   AP_k = P_{k+1}\bar{H}_k = P_kH_k+p_{k+1}\cdot h_k^{(m_k+1)}e_{m_k}^\mathrm{T}.   \label{eq:AP_k}  
 \end{align}
 \end{proposition}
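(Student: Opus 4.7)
The second equality $P_{k+1}\bar{H}_k = P_kH_k + p_{k+1}h_k^{(m_k+1)}e_{m_k}^\mathrm{T}$ is immediate from partitioning $P_{k+1} = (P_k, p_{k+1})$ and noting that $\bar{H}_k$ has a single nonzero row below $H_k$, containing the entry $h_k^{(m_k+1)}$ only in its last column. So the substance of the claim is the first equality, $AP_k = P_{k+1}\bar{H}_k$. Within the current block of iterations (from the most recent restart up to step $k+1$), the restarted AM coincides with a full-memory run on a linear problem with positive definite $A$, so \Cref{prop:linear} applies and gives $Q_j = -AP_j$ together with the well-definedness of all modified histories in the block. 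Hence $AP_k = -Q_k$, and it suffices to show $-Q_k = P_{k+1}\bar{H}_k$, i.e., to express every column of $Q_k$ as a prescribed linear combination of the columns of $P_{k+1}$.

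I would induct on $m_k$. For $m_k \geq 2$, write $H_k = (\bar{H}_{k-1}, h_k)$ and $P_{k+1} = (P_{k-1}, p_k, p_{k+1})$; since the last row of $\bar{H}_k$ is supported in its last column, the first $m_k-1$ columns of $P_{k+1}\bar{H}_k$ reduce to $P_k\bar{H}_{k-1}$, while the first $m_k-1$ columns of $AP_k$ coincide with $AP_{k-1}$. The inductive hypothesis $AP_{k-1} = P_k\bar{H}_{k-1}$ (which is the conclusion of the proposition at the previous step, valid because $m_k \geq 2$) therefore handles every column but the last, and the task collapses to proving
\begin{equation*}
  -q_k \;=\; P_k h_k + p_{k+1}\,h_k^{(m_k+1)}.
\end{equation*}
The base case $m_k = 1$ is this same identity (with $P_k = p_k$) and is verified by direct computation without invoking any inductive hypothesis.

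To derive the displayed identity, I expand $p_{k+1} = \Delta x_k - P_k\zeta_{k+1}$ together with $\Delta x_k = \beta_k r_k - (P_k + \beta_k Q_k)\Gamma_k$ to obtain
\begin{equation*}
  \beta_k r_k \;=\; p_{k+1} + P_k\phi_k + \beta_k Q_k\Gamma_k, \qquad \phi_k = \Gamma_k + \zeta_{k+1}.
\end{equation*}
The same identity one step earlier reads $\beta_{k-1}r_{k-1} = p_k + P_{k-1}\phi_{k-1} + \beta_{k-1}Q_{k-1}\Gamma_{k-1}$ when $m_{k-1} \geq 1$, and degenerates to $\beta_{k-1}r_{k-1} = p_k$ when $m_{k-1} = 0$ (relevant to the base case). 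Combining these expressions with $r_k = r_{k-1} + q_k + Q_{k-1}\zeta_k$ (from $q_k = \Delta r_{k-1} - Q_{k-1}\zeta_k$) and the splitting $Q_k\Gamma_k = Q_{k-1}\Gamma_k^{[m_k-1]} + q_k\Gamma_k^{(m_k)}$ yields a single linear relation whose coefficient of $q_k$ is $\beta_k(1-\Gamma_k^{(m_k)})$. Solving for $q_k$ and then invoking the inductive hypothesis $Q_{k-1} = -AP_{k-1} = -P_k\bar{H}_{k-1}$ to rewrite the $Q_{k-1}(\phi_{k-1} - \Gamma_k^{[m_k-1]})$ remainder as $-P_k\bar{H}_{k-1}(\phi_{k-1} - \Gamma_k^{[m_k-1]})$ matches the coefficient of $p_{k+1}$ with $h_k^{(m_k+1)} = -1/[\beta_k(1-\Gamma_k^{(m_k)})]$ and the coefficient of $P_k$ with the formula for $h_k$ in \Cref{definition:H_k}.

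The main obstacle is purely algebraic bookkeeping: one must align the two update identities (at steps $k-1$ and $k$), the definition $\phi_j = \Gamma_j + \zeta_{j+1}$, the block decomposition $Q_k = (Q_{k-1}, q_k)$, and the inductive hypothesis so that all coefficients collapse into the compact form \eqref{eq:H_k}. The appearance of $\bar{H}_{k-1}$ inside $h_k$ originates precisely from the substitution $Q_{k-1} \mapsto -P_k\bar{H}_{k-1}$, and one must also check that the $m_k = 1$ base case is correctly recovered as the degenerate limit in which $P_{k-1}$, $\bar{H}_{k-1}$, and $\phi_{k-1}$ are absent.
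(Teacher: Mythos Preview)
Your approach is correct and essentially the same as the paper's: both proceed by induction on $m_k$, reduce to the last column identity $Ap_k = P_kh_k + p_{k+1}h_k^{(m_k+1)}$, exploit the relation $p_{j+1} = \beta_j\bar r_j - P_j\phi_j$ at consecutive steps, and invoke the inductive hypothesis $AP_{k-1}=P_k\bar H_{k-1}$ (equivalently $Q_{k-1}=-P_k\bar H_{k-1}$) to collapse the remaining $Q_{k-1}$-term into the formula for $h_k$. The only cosmetic difference is that the paper connects steps $k-1$ and $k$ via $r_k=(I-\beta_{k-1}A)\bar r_{k-1}$ and $\bar r_{k-1}=(p_k+P_{k-1}\phi_{k-1})/\beta_{k-1}$, whereas you subtract the two $\beta_j r_j$ identities and use $\Delta r_{k-1}=q_k+Q_{k-1}\zeta_k$ together with $\Gamma_{k-1}+\zeta_k=\phi_{k-1}$; the algebra is identical. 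One point you omit that the paper does include: a short contradiction argument showing $1-\Gamma_k^{(m_k)}\neq 0$ (otherwise one deduces $\bar r_{k-1}=0$, hence $r_k=0$), so that the division defining $h_k$ and $h_k^{(m_k+1)}$ in \Cref{definition:H_k} is legitimate.
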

 
 The proof is given in \Cref{proof:prop:linear:hessenberg}. Since $Q_{k+1} = -AP_{k+1}$ from \Cref{prop:linear}, the relation \eqref{eq:AQ_k} holds as a result of \eqref{eq:AP_k}. 
\begin{remark}
 \Cref{definition:H_k} suggests that $H_k$ can be economically constructed by manipulating the coefficients in the restarted AM. Thus we can efficiently solve the problem  \eqref{eq:eigs_projection} without any additional matrix-vector product. 
\end{remark}

 Next, consider the nonlinear case.  We can still construct $H_k$ by \Cref{definition:H_k}. 
 Let $A:=h'(x^*)$.  Since $g$ is nonlinear, the relation \eqref{eq:AQ_k} does not exactly hold in general, which can make the eigenvalues of $H_k$ different from those computed by solving \eqref{eq:eigs_reduced}. Nonetheless, similar to the proof of  \Cref{lemma:diff}, we consider an auxiliary process using restarted AM to solve the linearized problem $\hat{h}(x) = 0$, where a Hessenberg matrix $\hat{H}_k$ can be constructed  as well.  By comparing $H_k$ and $\hat{H}_k$, we show that the eigenvalues of $H_k$ can still approximate the eigenvalues of $A$. 
 \begin{lemma}   \label{lemma:diff:Hessenberg}
  Suppose that \cref{assum:h} holds for the fixed-point problem \eqref{eq:fixed-point}. 
  For the Process~I in \cref{definition:two_processes}, assume that there are positive constants $\eta_0,\tau_0$ such that $\|h(x_j)\|_2\leq \eta_0\|h(x_0)\|_2$ $(0\leq j\leq k+1)$ and $\vert 1-\Gamma_j^{(m_j)} \vert \geq \tau_0$ $(1\leq j\leq k)$; 
  $H_k$ is  defined by \Cref{definition:H_k}. For the Process~II in \cref{definition:two_processes}, the upper Hessenberg matrix $\hat{H}_k\in\mathbb{R}^{m_k\times m_k}$ is defined correspondingly (by replacing $\Gamma_k, \zeta_{k+1}$ with $\hat{\Gamma}_k, \hat{\zeta}_{k+1}$ in \Cref{definition:H_k}). 
  Then for $x_0$ sufficiently close to $x^*$, we have
  \begin{equation}
   \|H_k\|_2 = \mathcal{O}(1), ~~ \|H_k-\hat{H}_k\|_2 = \hat{\kappa}\mathcal{O}(\|x_{k-m_k}-x^*\|_2).  \label{lemma:H_k}
  \end{equation}
 \end{lemma}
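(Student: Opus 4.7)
The plan is to prove both estimates in \eqref{lemma:H_k} by induction on $m_k$ within a restart interval, using Lemma~\ref{lemma:diff} together with the explicit formulas \eqref{eq:H_k} and \eqref{eq:h_k_2} to propagate bounds through the recursive construction of $H_k$. The key insight is that each coefficient $\Gamma_j^{(l)}$ and $\zeta_j^{(l)}$ is a ratio of inner products whose numerator and denominator are both of order $\|r_{k-m_k}\|_2^2 \sim \|x_{k-m_k}-x^*\|_2^2$, so the coefficients themselves are $\mathcal{O}(1)$, while their Process~I-versus-Process~II differences shrink by a factor of $\|x_{k-m_k}-x^*\|_2$.

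First, I would extend Lemma~\ref{lemma:diff} from iterates and residuals to the modified sequences: for every index $j$ in the current restart interval,
\begin{equation*}
\|p_j-\hat p_j\|_2 + \|q_j-\hat q_j\|_2 = \hat\kappa\cdot\mathcal{O}(\|x_{k-m_k}-x^*\|_2^2),
\end{equation*}
which follows by induction on $l$ from the recursive identities \eqref{eq:q_k_type2} and their Process~II analogues, once $\zeta_j^{(l)}=\mathcal{O}(1)$ is established. The $\mathcal{O}(1)$ bound on $\zeta_j^{(l)}$ and $\Gamma_j^{(l)}$ comes from the restarting condition \eqref{ineq:cond2}, which guarantees that every diagonal entry $v_i^\mathrm{T}q_i$ of $V_k^\mathrm{T}Q_k$ is of the same order as $v_{k-m_k+1}^\mathrm{T}q_{k-m_k+1}$, so the denominators in \eqref{eq:q_k_type2} and \eqref{eq:r_k_type2} stay uniformly comparable to the numerators.

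Second, I would bound the coefficient differences. A quotient-rule expansion of $\Gamma_j^{(l)}-\hat\Gamma_j^{(l)}$ (and likewise for $\zeta$) produces numerators of the form (bounded factor) $\times$ (difference of intermediate residuals) plus analogous denominator differences; each such difference is $\hat\kappa\mathcal{O}(\|x_{k-m_k}-x^*\|_2^2)$ while each denominator is $\Theta(\|x_{k-m_k}-x^*\|_2^2)$, yielding
\begin{equation*}
|\Gamma_j^{(l)}-\hat\Gamma_j^{(l)}| + |\zeta_j^{(l)}-\hat\zeta_j^{(l)}| = \hat\kappa\cdot\mathcal{O}(\|x_{k-m_k}-x^*\|_2).
\end{equation*}
Plugging these into \eqref{eq:H_k} and \eqref{eq:h_k_2}, and using the uniform lower bounds $|1-\Gamma_j^{(m_j)}|\geq \tau_0$, $|\beta_j|\geq \beta$, I can bound each entry of $h_k$ and $|h_k^{(m_k+1)}|$ by $\mathcal{O}(1)$, and each corresponding difference by $\hat\kappa\mathcal{O}(\|x_{k-m_k}-x^*\|_2)$. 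Combining with the induction hypothesis that $\|\bar H_{k-1}\|_2=\mathcal{O}(1)$ and $\|\bar H_{k-1}-\hat{\bar H}_{k-1}\|_2=\hat\kappa\mathcal{O}(\|x_{k-m_k}-x^*\|_2)$, and recalling that $m_k\leq m$ caps the number of columns accumulated, yields both claims in \eqref{lemma:H_k}.

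The main obstacle will be bookkeeping around the denominators. The factor $1/(1-\Gamma_k^{(m_k)})$ in \eqref{eq:H_k}--\eqref{eq:h_k_2} is sensitive near $\Gamma_k^{(m_k)}=1$, which is exactly why the hypothesis $|1-\Gamma_j^{(m_j)}|\geq\tau_0$ is imposed; and guaranteeing that each quotient of inner products is genuinely $\mathcal{O}(1)$ rather than merely finite requires simultaneously controlling the scaling of $\|r_j\|_2$ across the whole restart window via $\|h(x_j)\|_2\leq\eta_0\|h(x_0)\|_2$ together with \eqref{ineq:cond2}. Once these quantitative lower bounds are in place, the difference estimate is a routine term-by-term comparison using the Lipschitz continuity of $h'$ around $x^*$.
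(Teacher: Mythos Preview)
Your proposal is correct and follows essentially the same route as the paper: the paper also proceeds by induction on $m_k$, drawing on the intermediate bounds $|\zeta_j^{(l)}|,|\Gamma_j^{(l)}|=\mathcal{O}(1)$ and $|\zeta_j^{(l)}-\hat\zeta_j^{(l)}|,|\Gamma_j^{(l)}-\hat\Gamma_j^{(l)}|=\hat\kappa\mathcal{O}(\|x_{k-m_k}-x^*\|_2)$ already established inside the proof of Lemma~\ref{lemma:diff}, then feeding these into \eqref{eq:H_k}--\eqref{eq:h_k_2} with the lower bounds $|1-\Gamma_j^{(m_j)}|\ge\tau_0$ (and the induced $|1-\hat\Gamma_j^{(m_j)}|\ge\tau_0/2$) to propagate the estimates on $\bar H_{k-1}$ to $H_k$. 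The only minor remark is that your ``extension'' of Lemma~\ref{lemma:diff} to $p_j,q_j,\zeta_j,\Gamma_j$ is not new work---those bounds are precisely the auxiliary relations proved within that lemma---so you can cite them directly rather than rederive them.
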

 
 The proof can be found in \Cref{subsec:proof:lemma:diff:Hessenberg}. 
 It suggests that $H_k$ is a perturbation of $\hat{H}_k$. Since $\hat{h}(x)$ is linear, the eigenvalues of $\hat{H}_k$ exactly solve $\hat{V}_k^\mathrm{T}A\hat{Q}_k y = \hat{\lambda} \hat{V}_k^\mathrm{T}\hat{Q}_k y$, thus approximating the eigenvalues of $A$. Next, 
  we compare $\sigma(H_k)$ and $\sigma(\hat{H}_k)$ using the perturbation theory. 
 
 \begin{theorem} \label{them:eigen_estimate}
  Under the same assumptions of \cref{lemma:diff:Hessenberg}, let $\lambda$ be an eigenvalue of $H_k$. Then for $x_0$ sufficiently close to $x^*$, we have
  \begin{equation}
   \min_{\hat{\lambda}\in\sigma(\hat{H}_k)}\vert \hat{\lambda}-\lambda \vert 
    = \hat{\kappa}^{1/m_k}\mathcal{O}(\|x_{k-m_k}-x^*\|_2^{1/m_k}).   \label{them:diff:lambda}
  \end{equation}
 If further assuming $\hat{H}_k$ is diagonalizable, i.e., there is a  nonsingular matrix $M_k\in\mathbb{R}^{m_k\times m_k}$ such that $\hat{H}_k = M_k\hat{D}_kM_k^{-1}$, where $\hat{D}_k$ is diagonal, then
 \begin{equation}
  \min_{\hat{\lambda}\in\sigma(\hat{H}_k)}\vert \hat{\lambda}-\lambda \vert
    = \|M_k\|_2\|M_k^{-1}\|_2\hat{\kappa}\mathcal{O}(\|x_{k-m_k}-x^*\|_2).  \label{them:diff:diagonal_lambda}
 \end{equation}
 \end{theorem}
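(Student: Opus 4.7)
The plan is to invoke \Cref{lemma:diff:Hessenberg} to cast the problem as an eigenvalue perturbation problem, and then apply classical perturbation bounds from matrix analysis. Concretely, \Cref{lemma:diff:Hessenberg} supplies the two key facts we need: $\|H_k\|_2 = \mathcal{O}(1)$ and $\|H_k - \hat{H}_k\|_2 = \hat{\kappa}\,\mathcal{O}(\|x_{k-m_k}-x^*\|_2)$ whenever $x_0$ is sufficiently close to $x^*$. From these, by the triangle inequality, we also get $\|\hat{H}_k\|_2 \leq \|H_k\|_2 + \|H_k-\hat{H}_k\|_2 = \mathcal{O}(1)$, so both matrices are bounded independently of the perturbation size.

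For the first bound \eqref{them:diff:lambda}, I would apply Elsner's theorem on general (not necessarily diagonalizable) matrix eigenvalue perturbation: for any $\lambda \in \sigma(H_k)$,
\begin{equation*}
  \min_{\hat\lambda \in \sigma(\hat H_k)} |\lambda - \hat\lambda|
  \;\leq\; \bigl(\|H_k\|_2 + \|\hat H_k\|_2\bigr)^{1-1/m_k}\,\|H_k-\hat H_k\|_2^{1/m_k}.
\end{equation*}
Substituting the bounds from \Cref{lemma:diff:Hessenberg} yields
\begin{equation*}
  \min_{\hat\lambda \in \sigma(\hat H_k)} |\lambda - \hat\lambda|
  \;=\; \mathcal{O}(1)^{1-1/m_k}\cdot \bigl(\hat{\kappa}\,\mathcal{O}(\|x_{k-m_k}-x^*\|_2)\bigr)^{1/m_k}
  \;=\; \hat{\kappa}^{1/m_k}\,\mathcal{O}\bigl(\|x_{k-m_k}-x^*\|_2^{1/m_k}\bigr),
\end{equation*}
which is exactly \eqref{them:diff:lambda}. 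The $1/m_k$ exponent is the familiar price paid when the perturbed matrix may have non-trivial Jordan structure, and it arises naturally from bounding the product over the $m_k$ roots of the characteristic polynomial.

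For the second bound \eqref{them:diff:diagonal_lambda}, the diagonalizability of $\hat H_k$ removes the Jordan-block obstacle and allows the sharper Bauer--Fike theorem. With $\hat H_k = M_k \hat D_k M_k^{-1}$, Bauer--Fike states that for any $\lambda \in \sigma(H_k)$,
\begin{equation*}
  \min_{\hat\lambda \in \sigma(\hat H_k)} |\lambda - \hat\lambda|
   \;\leq\; \|M_k\|_2\,\|M_k^{-1}\|_2\,\|H_k - \hat H_k\|_2.
\end{equation*}
Plugging in the Lipschitz-order bound on $\|H_k - \hat H_k\|_2$ from \Cref{lemma:diff:Hessenberg} immediately delivers \eqref{them:diff:diagonal_lambda}.

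The main obstacle is not the perturbation machinery itself, which is textbook, but rather verifying the hypotheses of \Cref{lemma:diff:Hessenberg}: namely, that the assumptions $\|h(x_j)\|_2 \leq \eta_0\|h(x_0)\|_2$ and $|1-\Gamma_j^{(m_j)}| \geq \tau_0$ carry over to the point where we need $\|\hat H_k\|_2 = \mathcal{O}(1)$, and that the $\mathcal{O}(\cdot)$ constants in \eqref{lemma:H_k} are uniform so that Elsner's inequality can be applied cleanly. Once that is in place, the eigenvalue-perturbation step is a one-line invocation of the classical bounds, with Elsner giving the first part and Bauer--Fike giving the diagonalizable improvement.
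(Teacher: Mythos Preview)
Your proposal is correct and matches the paper's own proof essentially line for line: the paper invokes \Cref{lemma:diff:Hessenberg} together with Bhatia's Theorem~VIII.1.1 (which is Elsner's spectral variation bound) for \eqref{them:diff:lambda}, and \Cref{lemma:diff:Hessenberg} together with the Bauer--Fike theorem for \eqref{them:diff:diagonal_lambda}. Your worry about verifying the hypotheses of \Cref{lemma:diff:Hessenberg} is unnecessary, since the theorem statement explicitly assumes them.
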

 \begin{proof}
  \eqref{them:diff:lambda} follows from \Cref{lemma:diff:Hessenberg} and \cite[Theorem~VIII.1.1]{bhatia2013matrix}, and \eqref{them:diff:diagonal_lambda} is a consequence of \Cref{lemma:diff:Hessenberg} and Bauer-Fike theorem \cite[Theorem~7.2.2]{golub2013matrix}. 
 \end{proof}
  
  Since $\hat{H}_k$ is unavailable in practice, \Cref{them:eigen_estimate} suggests that we can use the eigenvalues of $H_k$ to roughly estimate the eigenvalues of $A$. 
  Then, suppose $m_k\geq 2$ and $\tilde\lambda$ is the eigenvalue of $H_{k-1}$ of the largest absolute value. We set the mixing parameter at the $k$-th iteration as
 \begin{align}
  \beta_k = \frac{2}{|\tilde{\lambda}|}.   \label{eq:beta_k}
 \end{align}
  We call such a way to choose $\beta_k$ as the adaptive mixing strategy since $\beta_k$ is chosen adaptively. 
  Usually, the extreme eigenvalues can be quickly estimated, so we only need to run the eigenvalue estimation procedure for a few steps. 

\section{Short-term recurrence methods} \label{sec:short-term}

  For solving high-dimensional problems, the memory cost of AM can be prohibitive when $m_k$ is large.  In this section, we show that if the Jacobian matrix is symmetric, the restarted AM methods can have short-term recurrence forms which  address the memory issue while maintaining fast convergence. 
  Since \Cref{assum:h} assumes that $h'(x^*)$ is positive definite, the symmetry of $h'(x^*)$ motivates us to consider solving SPD linear systems first.
 
 \begin{proposition}   \label{prop:spd_linear}
  Let $g(x)=(I-A)x+b$ in the fixed-point problem \eqref{eq:fixed-point}, where $A\in\mathbb{R}^{d\times d}$ is SPD and $b\in\mathbb{R}^d$. For the full-memory Type-I/Type-II AM with modified historical sequences, 
  we have 
  \begin{align}
   \zeta_k^{(j)} = 0, ~ \mbox{for} ~ j\leq k-3 ~ (k\geq 4),   ~ \mbox{and} ~~  
   \Gamma_k^{(j)} = 0, ~ \mbox{for} ~ j \leq k-2 ~ (k\geq 3),    
  \end{align}
  if the algorithm has not found the exact solution. 
 \end{proposition}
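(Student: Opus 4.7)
The plan is to upgrade the lower-triangular structure of $V_k^\mathrm{T}Q_k$ supplied by \Cref{prop:modified_seqs} to a fully diagonal structure in the SPD case, and then use the Krylov description of $\mathrm{range}(P_k)$ from \Cref{prop:linear} to pinpoint exactly which inner products survive. Once diagonality is in hand, the forward substitutions defining $\zeta_k$ and $\Gamma_k$ collapse to scalar formulas, and the proposition reduces to vanishing inner products that follow directly from orthogonality against the previous Krylov subspace.

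First I would observe that, by \Cref{prop:linear}, $Q_k = -AP_k$, so $V_k^\mathrm{T}Q_k$ equals $-P_k^\mathrm{T}AP_k$ (Type-I) or $P_k^\mathrm{T}A^2P_k$ (Type-II); when $A$ is SPD each is symmetric. Combined with the lower-triangular property from \Cref{prop:modified_seqs}, the matrix $V_k^\mathrm{T}Q_k$ must be diagonal, with nonzero diagonal entries as long as the exact solution has not been reached (since $P_k$ then has full column rank and $A, A^2$ are positive definite). Hence $v_i^\mathrm{T}q_j = 0$ for $i \ne j$, and the triangular systems $V_{k-1}^\mathrm{T}Q_{k-1}\zeta_k = V_{k-1}^\mathrm{T}\Delta r_{k-1}$ and $V_k^\mathrm{T}Q_k\Gamma_k = V_k^\mathrm{T}r_k$ decouple to
\begin{equation*}
\zeta_k^{(j)} \;=\; \frac{v_j^\mathrm{T}\Delta r_{k-1}}{v_j^\mathrm{T}q_j},\qquad \Gamma_k^{(j)} \;=\; \frac{v_j^\mathrm{T}r_k}{v_j^\mathrm{T}q_j}.
\end{equation*}
It remains to show the numerators vanish for the stated ranges of $j$.

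The key lemma is $v_j^\mathrm{T}r_k = 0$ for $j \le k-2$. In the linear setting the identity $Q_{k-1} = -AP_{k-1}$ implies $\bar r_{k-1}$ is the true residual at $\bar x_{k-1}$, so $r_k = (I-\beta_{k-1}A)\bar r_{k-1}$. The orthogonality $\bar r_{k-1}\perp\mathrm{range}(V_{k-1})$ from \Cref{prop:modified_seqs} kills $v_j^\mathrm{T}\bar r_{k-1}$ for all $j\le k-1$, so I only need $(Av_j)^\mathrm{T}\bar r_{k-1} = 0$ for $j\le k-2$, invoking symmetry of $A$. A direct Krylov computation does this: for Type-I, $Av_j = Ap_j \in A\mathcal{K}_j(A,r_0) \subset \mathcal{K}_{j+1}(A,r_0) = \mathrm{range}(P_{j+1})\subset\mathrm{range}(V_{k-1})$; for Type-II, $Av_j = Aq_j = -A^2p_j \in A\mathcal{K}_{j+1}(A,r_0) = \mathrm{range}(Q_{j+1})\subset\mathrm{range}(V_{k-1})$; in both cases the containment holds once $j+1\le k-1$. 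The lemma follows, giving $\Gamma_k^{(j)} = 0$ for $j\le k-2$.

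For $\zeta_k^{(j)}$, I would then write $\Delta r_{k-1} = r_k - r_{k-1}$ and apply the lemma at steps $k$ and $k-1$: $v_j^\mathrm{T}r_k = 0$ for $j\le k-2$ and $v_j^\mathrm{T}r_{k-1} = 0$ for $j\le k-3$, so the difference vanishes for $j\le k-3$. The main obstacle in the whole argument is the Krylov-containment bookkeeping of the third paragraph, which uses $Q_k = -AP_k$ and $\mathrm{range}(P_k) = \mathcal{K}_k(A,r_0)$ from \Cref{prop:linear} in two slightly different ways for the two types; once that is carried out, everything else is algebra built on diagonality.
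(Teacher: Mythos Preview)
Your proposal is correct and follows essentially the same route as the paper: establish diagonality of $V_k^\mathrm{T}Q_k$ from symmetry plus the lower-triangular structure of \Cref{prop:modified_seqs}, decouple the coefficients into ratios of inner products, and then use $r_k=(I-\beta_{k-1}A)\bar r_{k-1}$ together with the Krylov containment $AV_{k-2}\subset\mathrm{range}(V_{k-1})$ to kill the numerators. The only cosmetic difference is that the paper packages the containment as the single inclusion $\mathrm{range}(AV_{k-2})\subseteq\mathrm{range}(V_{k-1})$ (valid for both types at once), whereas you spell out the Type-I and Type-II cases separately via $\mathcal{K}_j$ and $A\mathcal{K}_j$.
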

 \begin{proof}
 Since $A$ is SPD, by \Cref{prop:linear}, the procedures \eqref{eq:q_k_type2} and \eqref{eq:r_k_type2} are well defined during the iterations. 
  \cref{prop:modified_seqs} also suggests that 
  \begin{align}
   q_k = \Delta r_{k-1}-Q_{k-1}\zeta_k \perp {\rm range}(V_{k-1}), ~~
   \bar{r}_k = r_k - Q_k\Gamma_k \perp {\rm range}(V_k).   \label{eq:perp}
  \end{align}
  Hence, 
  \begin{align}
   \zeta_k = (V_{k-1}^\mathrm{T}Q_{k-1})^{-1}V_{k-1}^\mathrm{T}\Delta r_{k-1}, ~~
   \Gamma_k = (V_k^\mathrm{T}Q_k)^{-1}V_k^\mathrm{T}r_k. 
  \end{align}
  Since $A$ is symmetric, it follows that $V_k^\mathrm{T}Q_k$ is diagonal for either type of the AM methods. Note that $r_k = \bar{r}_{k-1}-\beta_{k-1}A\bar{r}_{k-1}$ and ${\rm range}(AV_{k-2}) \subseteq {\rm range}(V_{k-1})$ due to \eqref{prop:linear:modified_seqs} in  \cref{prop:linear}. Hence
  \begin{align}
   V_{k-2}^\mathrm{T}r_k = V_{k-2}^\mathrm{T}\bar{r}_{k-1}
      -\beta_{k-1}(AV_{k-2})^\mathrm{T}\bar{r}_{k-1} = 0,   \label{eq:r_k_perp}
  \end{align}
  as a consequence of \eqref{eq:perp}. So the first $(k-2)$ elements of $V_k^\mathrm{T}r_k$ are zeros. Thus, $\Gamma_k^{(j)} = 0$, $j\leq k-2$, for $k\geq 3$. Also, \eqref{eq:r_k_perp} yields that
  \begin{align}
   V_{k-3}^\mathrm{T}\Delta r_{k-1} = V_{k-3}^\mathrm{T}r_k - V_{k-3}^\mathrm{T}r_{k-1} = 0,
  \end{align}
  which infers that the first $(k-3)$ elements of $V_{k-1}^\mathrm{T}\Delta r_{k-1}$ are zeros. Thus, $\zeta_k^{(j)} = 0$, $j\leq k-3$, for $k\geq 4$. 
 \end{proof}
 
 \cref{prop:spd_linear} suggests that for solving SPD linear systems, 
 we only need to maintain the most recent two vector pairs, and there is no loss of historical information. 
 Specifically, suppose $k\geq 3$ and  define $\{v_j\}$ as that in \Cref{subsec:modified_seqs}. 
 The procedure has short-term recurrences and is described as follows. 
 
 {\em Step~1: Modified vector pair.} At the beginning of the $k$-th iteration, $p_k,q_k$ are obtained from $\Delta x_{k-1}, \Delta r_{k-1}$ and   $(p_{k-2},p_{k-1}), (q_{k-2},q_{k-1})$:
 \begin{align}
  p_k = \Delta x_{k-1}-(p_{k-2},p_{k-1})\zeta_k, ~~ q_k = \Delta r_{k-1}-(q_{k-2},q_{k-1})\zeta_k,
 \end{align}
 where $\zeta_k\in\mathbb{R}^2$ is chosen such that $q_k \perp {\rm span}\{v_{k-2},v_{k-1}\}$. 
  
 {\em Step~2: AM update.} The next step is the ordinary AM update: 
 \begin{align}
  \bar{x}_k = x_k-(p_{k-1},p_k)\Gamma_k, ~~ \bar{r}_k = r_k-(q_{k-1},q_k)\Gamma_k, ~~ x_{k+1} = \bar{x}_k+\beta_k\bar{r}_k, 
 \end{align}
 where $\Gamma_k\in\mathbb{R}^2$ is chosen such that $\bar{r}_k \perp {\rm span}\{v_{k-1},v_{k}\}$. 

 We call it short-term recurrence AM (ST-AM). The Type-II ST-AM has been proposed in \cite{wei2022a}. Combining the ST-AM update with the restarting conditions \eqref{ineq:cond1}-\eqref{ineq:cond3}, we obtain the restarted ST-AM methods, as shown in \Cref{alg:short-term_restartedAM}. 

 
 \begin{algorithm}[ht]
\caption{Restarted ST-AM. $g$ is the fixed-point map with symmetric Jacobian $g'$. The Type-I method: $v_j:=p_j$ $(j\geq 1)$; the Type-II method: $v_j:=q_j$ $(j\geq 1)$. }
\label{alg:short-term_restartedAM}
\textbf{Input}: $ x_0\in\mathbb{R}^d, \beta_k>0, m\in\mathbb{Z}_+, \tau\in(0,1), \eta >0 $\\
\textbf{Output}: $ x\in\mathbb{R}^d $
\begin{algorithmic}[1] 
\STATE $ m_0=0$
\FOR{$k=0,1,\dots, $ until convergence}
 \STATE $ r_k=g(x_k)-x_k $
 \IF { $m_k>m$ \textbf{or} $\|r_k\|_2 > \eta \|r_{k-m_k}\|_2$}
 \STATE $m_k = 0$
 \ENDIF
 \IF{$m_k>0$} 
  \STATE $p_k=x_k-x_{k-1}, ~~ q_k = r_k-r_{k-1}$  
  \FOR{$j=\max\{1,m_k-2\},\dots,m_k-1$}
   \STATE $\zeta = \left(v_{k-m_k+j}^\mathrm{T}q_k\right)/\left(v_{k-m_k+j}^\mathrm{T}q_{k-m_k+j}\right)$
   \STATE $p_k=p_k-p_{k-m_k+j}\zeta, ~~ q_k = q_k-q_{k-m_k+j}\zeta$
  \ENDFOR  
  \IF{ $\vert v_k^\mathrm{T}q_k\vert < \tau \vert v_{k-m_k+1}^\mathrm{T}q_{k-m_k+1}\vert$ }
  \STATE $m_k = 0$
  \ENDIF
 \ENDIF
 \STATE $\bar{x}_k = x_k, ~~ \bar{r}_k = r_k$
 \FOR{$j=\max\{1,m_k-1\},\dots,m_k$}
  \STATE $\gamma = \left(v_{k-m_k+j}^\mathrm{T}\bar{r}_k\right)/\left(v_{k-m_k+j}^\mathrm{T}q_{k-m_k+j}\right)$
  \STATE $\bar{x}_k = \bar{x}_k-p_{k-m_k+j}\gamma, ~~ \bar{r}_k = \bar{r}_k-q_{k-m_k+j}\gamma$
 \ENDFOR
 \STATE $x_{k+1} = \bar{x}_k+\beta_k\bar{r}_k$ 
  \STATE $m_{k+1} = m_k+1$ 
\ENDFOR
\STATE \textbf{return} $ x_k $
\end{algorithmic}
\end{algorithm}     
 
 We establish the convergence properties in the nonlinear case.
 \begin{theorem}   \label{them:short_term_convergence}
 For the fixed-point problem \eqref{eq:fixed-point}, suppose that $g'(x)$ is symmetric and \cref{assum:h} holds. 
  Let $\lbrace x_k \rbrace$ and $\lbrace r_k \rbrace$ denote the iterates and residuals of the restarted ST-AM, $A:=I-g'(x^*)$, $\theta_k := \|I-\beta_kA\|_2 $,  and $\theta\in \bigl[\frac{L-\mu}{L+\mu},1\bigr)$ is a constant. 
 For $ k = 0,1,\dots$,  $\beta_k$ is chosen such that $\theta_k \leq \theta$. 
  The following results hold. 
 
  
  1. For the Type-I method, if $x_0$ is sufficiently close to $x^*$, then
  \begin{equation}
   \|x_{k+1}-x^*\|_A \leq 2\theta_k\left( \frac{\sqrt{L/\mu}-1}{\sqrt{L/\mu}+1} \right)^{m_k} \|x_{k-m_k}-x^*\|_A+\hat{\kappa}\mathcal{O}(\|x_{k-m_k}-x^*\|_2^2).  \label{them:short_term_type1} 
  \end{equation}
  
  2. For the Type-II method, if $x_0$ is sufficiently close to $x^*$, then
  \begin{equation}
   \|r_{k+1}\|_2 \leq 2\theta_k \left( \frac{\sqrt{L/\mu}-1}{\sqrt{L/\mu}+1} \right)^{m_k}\|r_{k-m_k}\|_2+\hat{\kappa}\mathcal{O}(\|x_{k-m_k}-x^*\|_2^2).   \label{them:short_term_type2}
  \end{equation}
  
  3. For either method, if the aforementioned assumptions hold and $m_k=d$, then  $\|x_{k+1}-x^*\|_2=\hat{\kappa}\mathcal{O}(\|x_{k-m_k}-x^*\|_2^2)$, namely $(d+1)$-step quadratic convergence. 
 \end{theorem}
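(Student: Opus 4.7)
The plan is to follow the Process~I/Process~II strategy already used for \cref{them:am}, specialized to the SPD setting. I take Process~I to be the restarted ST-AM applied to $h(x)=0$, and Process~II to be the full-memory AM with modified historical sequences applied to the linearized equation $\hat h(x)=A(x-x^*)=0$ with $A=h'(x^*)=I-g'(x^*)$, restarted at the same indices as Process~I. Because $g'(x)$ is symmetric and $h'(x^*)$ is positive definite by \cref{assum:h}, $A$ is SPD. \cref{prop:spd_linear} then guarantees that, in the linear SPD setting of Process~II, full-memory AM and ST-AM coincide iterate-by-iterate, so Process~II is \emph{exactly} the one in \cref{definition:two_processes} and all Krylov-subspace identifications of \cref{prop:linear} apply unchanged.

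The first substantive step is to establish the analogue of \cref{lemma:diff} for ST-AM: within an interval between two successive restarts,
\begin{equation*}
\|x_j-\hat x_j\|_2+\|r_j-\hat r_j\|_2=\hat\kappa\,\mathcal{O}(\|x_{k-m_k}-x^*\|_2^2).
\end{equation*}
I would mirror the induction in the proof of \cref{lemma:diff}, tracking the pairwise differences $p_j-\hat p_j$, $q_j-\hat q_j$, $r_j-\hat r_j$. The perturbation bookkeeping is in fact simpler than for full-memory AM because only the two most recent modified vector pairs enter each update; the restarting safeguards \eqref{ineq:cond1}--\eqref{ineq:cond3} continue to bound the condition number of $V_k^{\mathrm{T}}Q_k$ and the norms of $\zeta_j,\Gamma_j\in\mathbb{R}^2$, while the nonlinear perturbation contributes only the quadratic term $\tfrac12\hat\kappa\|x-x^*\|_2^2$ by \eqref{assum:jacobian_continue}.

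Granting this comparison, the convergence rate in Process~II follows from classical Krylov theory. For Type~I, \cref{prop:linear} identifies $\bar{\hat x}_k$ as the FOM iterate after $m_k$ steps starting from $x_{k-m_k}$; for SPD $A$ this is the CG iterate and hence satisfies the Chebyshev bound
\begin{equation*}
\|\bar{\hat x}_k-x^*\|_A \le 2\Bigl(\tfrac{\sqrt{L/\mu}-1}{\sqrt{L/\mu}+1}\Bigr)^{m_k}\|x_{k-m_k}-x^*\|_A.
\end{equation*}
The mixing step gives $\hat x_{k+1}-x^*=(I-\beta_k A)(\bar{\hat x}_k-x^*)$; because $A$ is SPD, $I-\beta_kA$ commutes with $A^{1/2}$, so $\|(I-\beta_kA)v\|_A\le\theta_k\|v\|_A$, supplying the prefactor $2\theta_k$. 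For Type~II, $\bar{\hat x}_k$ is the GMRES iterate, which for symmetric $A$ is MINRES, yielding the same Chebyshev bound in the 2-norm on $\|\bar{\hat r}_k\|_2$; the mixing step $\hat r_{k+1}=(I-\beta_k A)\bar{\hat r}_k$ then supplies $\theta_k$. Combining with the ST-AM comparison lemma and absorbing the second-order term through norm equivalence produces \eqref{them:short_term_type1} and \eqref{them:short_term_type2}. Part~3 is immediate: when $m_k=d$, Cayley--Hamilton forces the Chebyshev minimum on $\mathcal{K}_d(A,\cdot)$ to vanish, so $\bar{\hat x}_k=x^*$ in Process~II and only the quadratic error survives.

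The main obstacle I anticipate is Step~1: verifying that the two-column ST-AM update admits the same perturbation analysis as the full-memory AM treated in \cref{lemma:diff}. I expect this to be routine but delicate, as one must confirm that the quadratic error does not accumulate multiplicatively through the $m_k$ short-term updates and that the uniform bounds on $\zeta_j,\Gamma_j$, together with the nonsingularity of $V_k^{\mathrm{T}}Q_k$, are preserved across each cycle by the restarting conditions exactly as in the full-memory case.
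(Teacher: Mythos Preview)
Your proposal is correct and essentially matches the paper's proof: the paper likewise takes Process~II to be (ST-)AM on the linearized equation, invokes \cref{prop:spd_linear} so that ST-AM and full AM coincide in that SPD linear setting, asserts the ST-AM analogue of \cref{lemma:diff} as proved ``similarly,'' and then combines the Krylov identifications from \cref{prop:linear} with the Chebyshev bound exactly as you outline. The one ingredient you do not mention but the paper spells out is a bootstrapping induction---using $\theta_k\le\theta<1$ to propagate $\|x_j-x^*\|_A\le\|x_0-x^*\|_A$ (Type~I) or $\|r_j\|_2\le\|r_0\|_2$ (Type~II) across the whole iteration---which is what actually verifies the standing hypothesis $\|h(x_j)\|_2\le\eta_0\|h(x_0)\|_2$ needed to invoke the comparison lemma at every step.
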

 
 We give the proof in \Cref{proof:them:short_term_convergence}. 
 \cref{them:short_term_convergence} shows that the asymptotic convergence rates of both types of restarted ST-AM methods are optimal with respect to the condition number (see \cite[Section~2.1.4]{Nesterov2018}), thus significantly improving the convergence rate of the fixed-point iteration.  The theorem also suggests that the restarted ST-AM methods are applicable for solving large-scale  unconstrained optimization problems since the Hessian matrices are naturally symmetric. 
 
 \begin{remark}   \label{remark:convergence_sym}
  When $A$ is symmetric, the convergence bound \eqref{them:type1} for the restarted Type-I AM can be refined to 
  \begin{equation}
   \|x_{k+1}-x^*\|_A \leq \theta_k\min_{\mathop{}_{p(0)=1}^{p\in \mathcal{P}_{m_k}}}\|p(A)(x_{k-m_k}-x^*)\|_A + \hat{\kappa}\mathcal{O}(\|x_{k-m_k}-x^*\|_2^2).   \label{them:type1_sym}
  \end{equation}
  Using Chebyshev polynomials for the minimization problems in \eqref{them:type1_sym} and \eqref{them:type2} \cite[Theorem~6.29]{saad2003iterative},  we can establish the same convergence rates as \eqref{them:short_term_type1} and \eqref{them:short_term_type2}  for the restarted Type-I AM and the restarted Type-II AM, respectively. On the other side, the convergence results in \cite{toth2015convergence,evans2020proof} shown by \eqref{convergence:toth} and  \eqref{convergence:evans} in \Cref{remark:convergence} cannot provide such refined results and underestimate the efficacy of AM. 
 \end{remark}
 
 In practice, we can also choose the mixing parameters $\lbrace \beta_k\rbrace$ with simplified computation by exploring the symmetry of the Jacobian.
 \begin{proposition}  \label{prop:linear:tridiagonal}
 Let $g(x)=(I-A)x+b$ in the fixed-point problem \eqref{eq:fixed-point}, where $A\in\mathbb{R}^{d\times d}$ is SPD and $b\in\mathbb{R}^d$. For the restarted Type-I/Type-II ST-AM method, if $m_{k+1}\geq 2$ at the $(k+1)$-th iteration, then 
 \begin{align}
  Ap_k = t_k^{(m_k-1)}p_{k-1}+t_k^{(m_k)}p_k+t_k^{(m_k+1)}p_{k+1},
 \end{align}
 where $p_{k-m_k}:=\mathbf{0}\in\mathbb{R}^d$, and the coefficients are given by
 \begin{equation}
 \begin{aligned}
  & t_k^{(m_k-1)} = \frac{\phi_{k-1}}{\beta_{k-1}(1-\Gamma_k^{(m_k)})},  \\
  & t_k^{(m_k)} = \frac{1}{1-\Gamma_k^{(m_k)}}\left( \frac{1}{\beta_{k-1}}-\frac{\phi_k}{\beta_k} \right),   \\
  & t_k^{(m_k+1)} = -\frac{1}{\beta_k(1-\Gamma_k^{(m_k)})},  
 \end{aligned}   \label{eq:t_k}
 \end{equation} 
 where $\phi_k:=0$ if $m_k = 0$, and $\phi_k := \Gamma_k^{(m_k)}+\zeta_{k+1}^{(m_k)} =  \Gamma_{k+1}^{(m_k)}= \frac{v_k^\mathrm{T}r_{k+1}}{v_k^\mathrm{T}q_k}$ if $m_k\geq 1$. Thus there exists a tridiagonal matrix $\bar{T}_k\in\mathbb{R}^{(m_k+1)\times m_k}$ such that 
 \begin{align}
  AP_k = P_{k+1}\bar{T}_k = P_kT_k+p_{k+1}\cdot t_k^{(m_k+1)}e_{m_k}^\mathrm{T},   \label{eq:T_k}
 \end{align}
 where $T_k$ is obtained from $\bar{T}_k$ by deleting its last row, and $e_{m_k}$ is the $m_k$-th column of $I_{m_k}$. 
 \end{proposition}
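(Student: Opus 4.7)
My plan is to reduce Proposition~5.4 to Proposition~5.2. That proposition already gives $AP_k = P_kH_k + p_{k+1}h_k^{(m_k+1)}e_{m_k}^\mathrm{T}$ with the upper Hessenberg $H_k$ whose last column is described explicitly in Definition~5.1. Since Proposition~5.4 is stated only for the restarted ST-AM on SPD linear systems, the content of the proof is to show that in this regime $H_k$ collapses to a tridiagonal matrix $T_k$ whose nonzero column entries coincide with the coefficients $t_k^{(m_k-1)}$, $t_k^{(m_k)}$, $t_k^{(m_k+1)}$ listed in \eqref{eq:t_k}. Equivalently, I will show that the last column $h_k$ has only its last two entries nonzero and that they equal $t_k^{(m_k-1)}$ and $t_k^{(m_k)}$.

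The engine of the simplification is a sharper sparsity statement on the combined coefficient vector $\vec\phi_j := \Gamma_j + \zeta_{j+1}$. By Proposition~5.3 (which applies to ST-AM in the SPD linear case since the short-term update coincides with the full-memory one), $\Gamma_j$ and $\zeta_j$ have only their last two entries nonzero. I claim that in the SPD setting $\vec\phi_j$ has only a single nonzero entry at position $m_j$, equal to the scalar $\phi_j$ in Proposition~5.4. To see this, sum the closed forms $\Gamma_j = (V_j^\mathrm{T}Q_j)^{-1}V_j^\mathrm{T}r_j$ and $\zeta_{j+1} = (V_j^\mathrm{T}Q_j)^{-1}V_j^\mathrm{T}\Delta r_j$ to obtain $\vec\phi_j = (V_j^\mathrm{T}Q_j)^{-1}V_j^\mathrm{T}r_{j+1}$. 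Because $V_j^\mathrm{T}Q_j$ is diagonal in the SPD case (as used in the proof of Proposition~5.3), the first $m_j$ steps of the modified Gram--Schmidt procedure \eqref{eq:r_k_type2} that computes $\Gamma_{j+1}$ project $r_{j+1}$ against the same basis with identical per-direction denominators, so $\Gamma_{j+1}^{(i)} = \vec\phi_j^{(i)}$ for $i = 1,\dots,m_j$. Proposition~5.3 applied to $\Gamma_{j+1}$ then forces $\vec\phi_j^{(i)} = 0$ for $i \leq m_j - 1$, leaving only $\vec\phi_j^{(m_j)} = \Gamma_{j+1}^{(m_j)} = \phi_j$.

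With $\vec\phi_j = \phi_j e_{m_j}$ in hand, I substitute into Definition~5.1 for $h_k$. The crucial cancellation is that $\vec\phi_{k-1} - \Gamma_k^{[m_k-1]} = \phi_{k-1}e_{m_k-1} - \Gamma_k^{(m_k-1)}e_{m_k-1} = 0$, because $\phi_{k-1} = \Gamma_k^{(m_{k-1})} = \Gamma_k^{(m_k-1)}$ by the identity established in the previous paragraph. Hence the entire $\bar H_{k-1}(\vec\phi_{k-1} - \Gamma_k^{[m_k-1]})$ term drops out, and Definition~5.1 reduces to
\begin{equation*}
h_k = \frac{1}{1-\Gamma_k^{(m_k)}}\Bigl[\frac{\phi_{k-1}}{\beta_{k-1}}e_{m_k-1} + \Bigl(\frac{1}{\beta_{k-1}} - \frac{\phi_k}{\beta_k}\Bigr)e_{m_k}\Bigr],
\end{equation*}
whose nonzero entries are exactly $t_k^{(m_k-1)}$ at position $m_k-1$ and $t_k^{(m_k)}$ at position $m_k$. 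Combined with the explicit $h_k^{(m_k+1)} = -1/[\beta_k(1-\Gamma_k^{(m_k)})] = t_k^{(m_k+1)}$ from Definition~5.1, Proposition~5.2 then yields $Ap_k = t_k^{(m_k-1)}p_{k-1} + t_k^{(m_k)}p_k + t_k^{(m_k+1)}p_{k+1}$, and collecting these columns gives the matrix form $AP_k = P_{k+1}\bar T_k$. The boundary case $m_k = 1$ is handled by the convention $p_{k-m_k}:=0$ together with $\phi_{k-1} = 0$ when $m_{k-1}=0$, which renders the $p_{k-1}$ term vacuous, consistent with the $m_k=1$ branch of Definition~5.1.

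The main obstacle I anticipate is the justification of $\vec\phi_j^{(i)} = \Gamma_{j+1}^{(i)}$ for $i \leq m_j$, which hinges on the equivalence of the two orthogonalization problems when $V_j^\mathrm{T}Q_j$ is diagonal. A careful version must reconcile the iterative MGS formulation in \eqref{eq:r_k_type2}, where later entries modify earlier intermediate vectors, with the closed-form projection formula; the cleanest route is to verify inductively in $i$ that the intermediate residuals $r_{j+1}^{(i)}$ produced inside the $\Gamma_{j+1}$ loop are orthogonal to $v_{j-m_j+1},\dots,v_{j-m_j+i}$, which together with diagonality of $V_j^\mathrm{T}Q_j$ decouples the per-direction coefficients and matches them with those of $\vec\phi_j$. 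Everything after this identity is algebraic substitution into Definition~5.1.
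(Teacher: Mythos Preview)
Your approach is correct and essentially matches the paper's: both compute $\Gamma_k+\zeta_{k+1}=(V_k^\mathrm{T}Q_k)^{-1}V_k^\mathrm{T}r_{k+1}$, use diagonality of $V_k^\mathrm{T}Q_k$ in the SPD case to reduce this vector to the single nonzero entry $\phi_k=v_k^\mathrm{T}r_{k+1}/(v_k^\mathrm{T}q_k)=\Gamma_{k+1}^{(m_k)}$, and then feed this into Definition~5.1 and Proposition~5.2 so that the $\bar H_{k-1}(\phi_{k-1}-\Gamma_k^{[m_k-1]})$ term vanishes and $H_k$ collapses to the tridiagonal $T_k$. The obstacle you anticipate is not one: since $V_{j+1}^\mathrm{T}Q_{j+1}$ is diagonal, the closed form $\Gamma_{j+1}=(V_{j+1}^\mathrm{T}Q_{j+1})^{-1}V_{j+1}^\mathrm{T}r_{j+1}$ immediately gives $\Gamma_{j+1}^{[m_j]}=(V_j^\mathrm{T}Q_j)^{-1}V_j^\mathrm{T}r_{j+1}=\vec\phi_j$ with no MGS induction needed (equivalently, the paper just cites $V_{k-1}^\mathrm{T}r_{k+1}=0$ from the proof of Proposition~5.3).
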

 \begin{proof}
  Note that 
  \begin{align*}
   \Gamma_k+\zeta_{k+1} = (V_k^\mathrm{T}Q_k)^{-1}V_k^\mathrm{T}r_k
   						 + (V_k^\mathrm{T}Q_k)^{-1}V_k^\mathrm{T}\Delta r_k 
   					    = (V_k^\mathrm{T}Q_k)^{-1}V_k^\mathrm{T}r_{k+1}.
  \end{align*}
  Since $V_k^\mathrm{T}Q_k$ is diagonal due to symmetry, and $V_{k-1}^\mathrm{T}r_{k+1} = 0$, it follows that 
  \begin{equation*}
   \Gamma_k+\zeta_{k+1} = (0,\dots,0,\frac{v_k^\mathrm{T}r_{k+1}}{v_k^\mathrm{T}q_k})^\mathrm{T} = \Gamma_{k+1}^{[m_k]},   
  \end{equation*}
 where $\Gamma_{k+1}^{[m_k]}$ is the subvector of the first $m_k$ elements of $\Gamma_{k+1}$. Then the formula \eqref{eq:t_k} follows from \Cref{definition:H_k} and \cref{prop:linear:hessenberg}. 
 \end{proof}
 
 Then, following the derivation in \cref{sec:mix_params}, the projection method \eqref{eq:eigs_projection} to estimate the eigenvalues of $h'(x^*)$ is reduced to solving the eigenvalues of $T_k$. 

 \begin{theorem}  \label{them:short-term:eigen_estimate}
 For the fixed-point problem \eqref{eq:fixed-point}, suppose that $g'(x)$ is symmetric and \cref{assum:h} holds. For the Process~I in \Cref{definition:two_processes}, replace the restarted AM by the restarted ST-AM, and assume that there are positive constants $\eta_0, \tau_0$ such that $\|h(x_j)\|_2 \leq \eta_0\|h(x_0)\|_2$ $(0\leq j\leq k+1)$, $|1-\Gamma_j^{(m_j)}|\geq \tau_0$ $(1\leq j\leq k)$; $T_k$ is the tridiagonal matrix as defined in \Cref{prop:linear:tridiagonal}, and $\lambda \in \sigma(T_k)$. 
 For the Process~II in \Cref{definition:two_processes}, the tridiagonal matrix $\hat{T}_k$ is defined correspondingly. 
  For $x_0$ sufficiently close to $x^*$, we have 
  \begin{align}
   \min_{\hat{\lambda}\in \sigma(\hat{T}_k)}\vert \hat{\lambda}-\lambda \vert = \hat{\kappa}\mathcal{O}(\|x_{k-m_k}-x^*\|_2).    \label{them:diff:short_term_lambda}
  \end{align}
 \end{theorem}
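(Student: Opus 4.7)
The plan is to adapt the proof of \Cref{them:eigen_estimate} to the tridiagonal setting, exploiting the symmetry of $g'(x^*)$ to avoid the $m_k$-th root loss in \eqref{them:diff:lambda}. The overall strategy has three stages: (i) derive an $\mathcal{O}(\hat{\kappa}\|x_{k-m_k}-x^*\|_2)$ perturbation bound between $T_k$ and $\hat{T}_k$; (ii) show that $\hat{T}_k$ is similar to a symmetric tridiagonal matrix with a similarity transformation of bounded condition number; (iii) invoke Bauer-Fike to translate the matrix perturbation into an eigenvalue perturbation.

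First, I would establish the entrywise bound
\[
\|T_k-\hat{T}_k\|_2 = \hat{\kappa}\,\mathcal{O}(\|x_{k-m_k}-x^*\|_2).
\]
By \Cref{prop:linear:tridiagonal}, the nonzero entries of $T_k$ in column $j$ are determined by closed-form expressions in $\Gamma_j^{(m_j)}$, $\phi_j = \Gamma_j^{(m_j)}+\zeta_{j+1}^{(m_j)}$, and the mixing parameters $\beta_{j-1},\beta_j$, and the same formulas produce $\hat{T}_k$ from $\hat{\Gamma}_j^{(m_j)}$, $\hat{\phi}_j$, $\beta_{j-1},\beta_j$. The proof of \Cref{lemma:diff:Hessenberg} already provides perturbation estimates of the form $|\Gamma_j^{(i)}-\hat{\Gamma}_j^{(i)}|,\,|\zeta_j^{(i)}-\hat{\zeta}_j^{(i)}| = \hat{\kappa}\,\mathcal{O}(\|x_{k-m_k}-x^*\|_2)$, and the standing assumption $|1-\Gamma_j^{(m_j)}|\geq\tau_0$ ensures that the denominators in \eqref{eq:t_k} are bounded away from zero. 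Propagating these through the formulas \eqref{eq:t_k} (with the denominators now uniformly controlled) yields the claim.

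Next, I would show that the perturbation bound can be converted to an eigenvalue bound with a constant similarity factor. Since $A = h'(x^*)$ is symmetric positive definite, the identity $A\hat{P}_k = \hat{P}_k\hat{T}_k + \hat{p}_{k+1}t_k^{(m_k+1)}e_{m_k}^\mathrm{T}$ from \Cref{prop:linear:tridiagonal}, combined with $\hat{V}_k^\mathrm{T}\hat{Q}_k$ being diagonal in the symmetric linear case, implies that $\hat{T}_k$ is similar to the symmetric matrix $D^{-1/2}\hat{V}_k^\mathrm{T}A\hat{Q}_kD^{-1/2}$ via a diagonal similarity $D^{1/2}$ with $D=\hat{V}_k^\mathrm{T}\hat{Q}_k$. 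In particular, $\hat{T}_k$ is diagonalizable with real eigenvalues and an eigenvector matrix $M_k$ whose 2-norm condition number is bounded (via \eqref{ineq:cond2}) by a constant depending only on $\tau, L/\mu$. Applying the Bauer-Fike theorem \cite[Theorem~7.2.2]{golub2013matrix} as in the proof of \eqref{them:diff:diagonal_lambda} then gives
\[
\min_{\hat{\lambda}\in\sigma(\hat{T}_k)}|\hat{\lambda}-\lambda| \leq \|M_k\|_2\|M_k^{-1}\|_2\,\|T_k-\hat{T}_k\|_2 = \hat{\kappa}\,\mathcal{O}(\|x_{k-m_k}-x^*\|_2).
\]

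The main obstacle is Step (i): carefully tracking the perturbations of $\Gamma_j$ and $\zeta_j$ under the short-term recurrence and verifying that the denominators $1-\Gamma_j^{(m_j)}$ and $v_j^\mathrm{T}q_j$ appearing in \eqref{eq:t_k} remain uniformly bounded away from zero so that the perturbation does not blow up. The restarting conditions \eqref{ineq:cond1}--\eqref{ineq:cond3}, combined with the assumed lower bound $|1-\Gamma_j^{(m_j)}|\geq\tau_0$, exactly supply the uniform control needed, mirroring what \Cref{lemma:diff:Hessenberg} accomplishes in the general Hessenberg case. A minor subtlety is that the ST-AM update only orthogonalizes against the most recent two vector pairs; in the linearized Process~II this still recovers the full Krylov structure because $A$ is symmetric (\Cref{prop:spd_linear}), but in the nonlinear Process~I it does not, and this discrepancy must be absorbed into the $\hat{\kappa}\mathcal{O}(\|x_{k-m_k}-x^*\|_2)$ term.
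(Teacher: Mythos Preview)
Your proposal is essentially the same argument as the paper's: (i) show $\|T_k-\hat{T}_k\|_2=\hat{\kappa}\,\mathcal{O}(\|x_{k-m_k}-x^*\|_2)$ by adapting \Cref{lemma:diff:Hessenberg} to the tridiagonal formulas \eqref{eq:t_k}; (ii) exploit the symmetry of $A$ to see that $\hat{T}_k$ is similar, via a diagonal similarity built from $\hat{V}_k^\mathrm{T}\hat{Q}_k$, to a real symmetric matrix and hence diagonalizable with bounded eigenvector condition number; (iii) apply Bauer--Fike. Two small corrections: for the Type-I method $\hat{V}_k^\mathrm{T}\hat{Q}_k=-\hat{P}_k^\mathrm{T}A\hat{P}_k$ is \emph{negative} definite, so the paper takes $\hat{W}_k=-\hat{V}_k^\mathrm{T}\hat{Q}_k$ before forming the square root; and the $\mathcal{O}(1)$ bound on the condition number of the similarity is obtained not directly from \eqref{ineq:cond2} (which concerns $v_j^\mathrm{T}q_j$, not $\hat{v}_j^\mathrm{T}\hat{q}_j$) but from the perturbation relations \eqref{ineq:vq_lower_2}, \eqref{lemma:p_k}, \eqref{lemma:diff:p_k} in the proof of \Cref{lemma:diff}, which transfer the lower and upper bounds to the hatted quantities.
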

 
 The proof is given in \Cref{proof:them:short-term:eigen_estimate}. Since $\hat{h}(x)$ is linear and $h'(x^*)$ is symmetric, 
  using the eigenvalues of $\hat{T}_k$ to approximate the eigenvalues of $h'(x^*)$ is equivalent to a Lanczos method \cite{golub2013matrix}. For the Type-I method, the Lanczos method is $A$-norm based; for the Type-II method, the Lanczos method is $A^2$-norm based. Thus the eigenvalue $\hat{\lambda}\in\sigma(\hat{T}_k)$ is known as the generalized Ritz value \cite{manteuffel1994roots}. \Cref{them:short-term:eigen_estimate} indicates that  $\sigma(T_k)$ is close to $\sigma(\hat{T}_k)$ when $\|x_{k-m_k}-x^*\|_2$ is small.  
 
 At the $k$-th iteration, where $m_k\geq 2$, let $\tilde{\mu}$ be the eigenvalue of $T_{k-1}$ of the smallest absolute value, and let $\tilde{L}$ be the eigenvalue of $T_{k-1}$ of the largest absolute value. We use $\vert\tilde{\mu}\vert$ and $\vert\tilde{L}\vert$ 
  as the estimates of $\mu$ and $L$. Then we set 
 \begin{align}
  \beta_k = \frac{2}{|\tilde{\mu}|+|\tilde{L}|}    \label{eq:symmetric:beta_k}
 \end{align}
 as an estimate of the optimal value $2/(\mu+L)$.

\section{Numerical experiments}
\label{sec:experiments}
 
  In this section, we validate our theoretical findings by solving three nonlinear problems:  
   (I) the modified Bratu problem; (II) the Chandrasekhar H-equation; (III) the regularized logistic regression. 
   Additional experimental results can be found in \Cref{appendix:additional_expr}. 
 Let AM-I and AM-II denote the restarted Type-I and restarted Type-II AM.  AM-I($m$) and AM-II($m$) denote the Type-I AM and Type-II AM with $m_k = \min\{m,k\}$.  ST-AM-I and ST-AM-II are abbreviations of the restarted Type-I and restarted Type-II ST-AM. 
 Since this work focuses on the theoretical properties of AM, we used the iteration number  as the evaluation metric of convergence in the experiments.

 \subsection{Modified Bratu problem}  To verify \Cref{them:am} and \Cref{them:short_term_convergence}, we considered solving the modified Bratu problem introduced in \cite{fang2009two}:
 \begin{align}   \label{eq:bratu}
  u_{xx}+u_{yy}+\alpha u_x+\lambda e^{u} = 0, 
 \end{align}
 where $u$ is a function of $(x,y)\in \mathcal{D}=[0,1]^2$, and $\alpha, \lambda \in\mathbb{R}$ are constants. The boundary condition is $u(x,y)\equiv 0 $ for $(x,y)\in\partial\mathcal{D}$.  The equation was discretized using centered differences on a $200\times 200$ grid. The resulting problem is a  system of nonlinear equations: 
  $ F(U) = 0,$
 where $U\in\mathbb{R}^{200\times 200}$ and $F:\mathbb{R}^{200\times 200}\rightarrow\mathbb{R}^{200\times 200}$.  Following \cite{fang2009two},  we set $\lambda = 1$ and initialized $U$ with $0$.
 The Picard iteration is $U_{k+1} = U_k+\beta r_k$, where $r_k = F(U_k)$ is the residual. For the restarted AM and ST-AM, we set $\tau = 10^{-32}$, $m=1000$, and  $\eta = \infty$   
 since a large $m_k$ is beneficial for solving this problem. 
 
 \subsubsection{Nonsymmetric Jacobian} We set $\alpha = 20$ so that the Jacobian $F'(U)$ is not symmetric. For the Picard iteration, we tuned $\beta_k$ and set it as $6\times 10^{-6}$.  
 We applied the adaptive mixing strategy \eqref{eq:beta_k} for AM-I and AM-II with $\beta_0 = 1$. 
 
 \begin{figure}[t]
\centering 
\subfigure{
\includegraphics[width=0.46\textwidth]{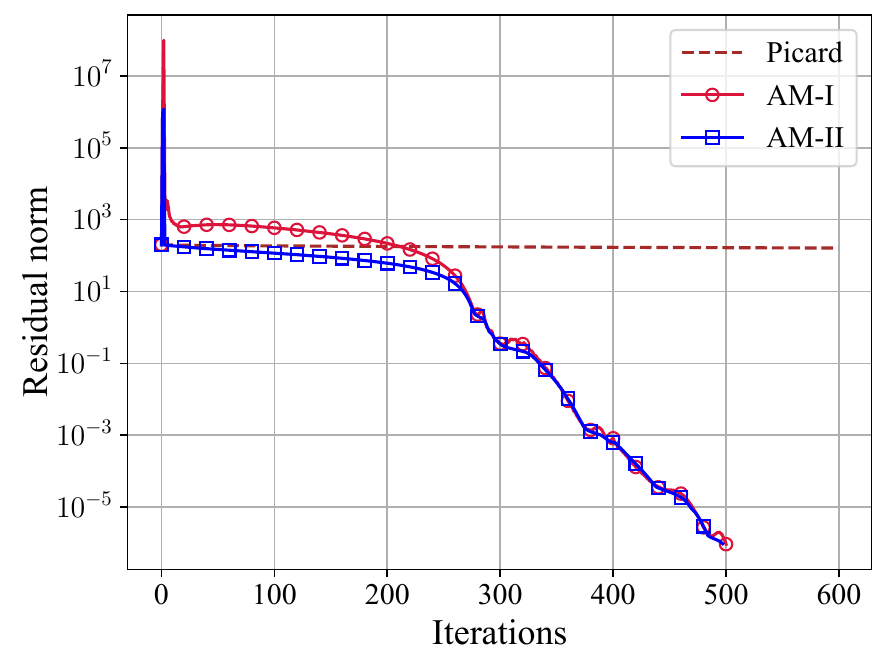}
}
\subfigure{
\includegraphics[width=0.46\textwidth]{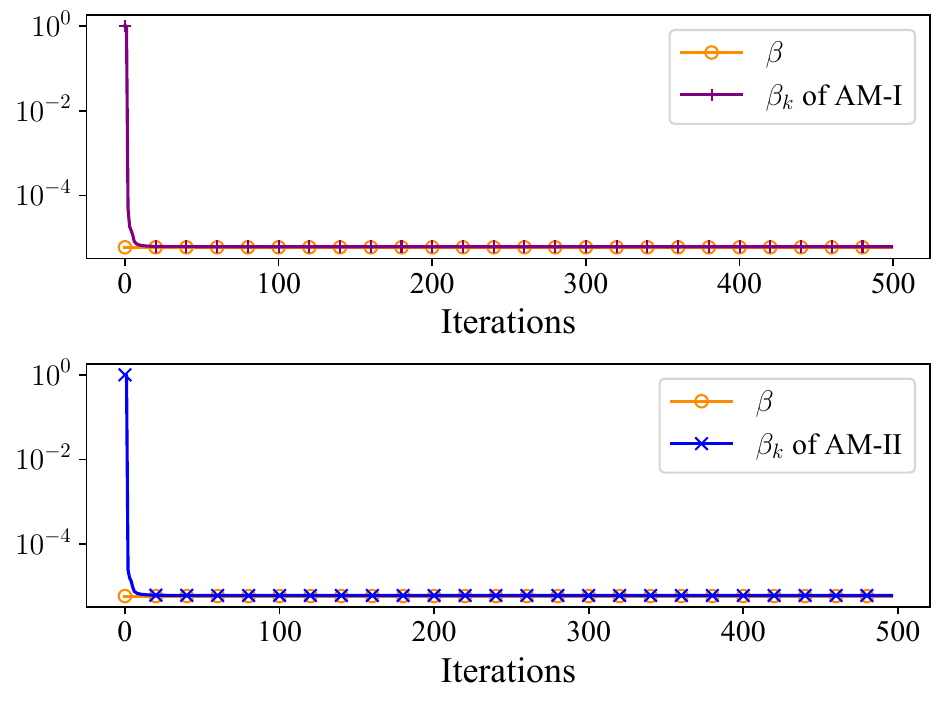}
}
\caption{The modified Bratu problem with $\alpha = 20$. Left: $\|F(U_k)\|_2$ of each method.  Right: $\beta$ of Picard iteration and $\beta_k$ of AM-I/AM-II during the iterations. }
\label{fig:bratu:nonsymmetric}
\end{figure} 

 As shown in \Cref{fig:bratu:nonsymmetric}, both AM-I and AM-II converge much faster than the Picard iteration. In fact, to achieve $\|F\|_2 \leq 10^{-6}$, AM-I uses  500 iterations, and AM-II uses 497 iterations, where no restart occurs in either method. Hence, the results verify \Cref{them:am} and suggest that AM methods significantly accelerate the Picard iteration when $m_k$ is large in solving this problem. Also, 
 observe that AM-I and AM-II diverge in the initial stage due to the inappropriate choice $\beta_0=1$. Nonetheless, from \Cref{fig:bratu:nonsymmetric}, we see the $\beta_k$ is quickly adjusted to the optimal value $\beta = 6\times 10^{-6}$ based on the eigenvalue estimates.  Thus we only need to compute the eigenvalue estimates within a few steps and keep $\beta_k$ unchanged in the later iterations.  

 \subsubsection{Symmetric Jacobian} We set $\alpha = 0$ so that the Jacobian $F'(U)$ is symmetric.  We compared the restarted ST-AM methods with the Picard iteration, the limited-memory AM, and the full-memory AM. 
 By a grid search in $\{1\times 10^{-6}, 2\times 10^{-6},\dots,1\times 10^{-5}\}$, we chose $\beta = 6\times 10^{-6}$  for the Picard iteration. Then, we set $\beta_k=6\times 10^{-6}$ for AM-I($2$),  AM-II($2$), AM-I($\infty$), and AM-II($\infty$).  We applied the adaptive mixing strategy \eqref{eq:symmetric:beta_k} for ST-AM-I and ST-AM-II with $\beta_0 = 1$. 
 
 \begin{figure}[t]
\centering 
\subfigure{
\includegraphics[width=0.46\textwidth]{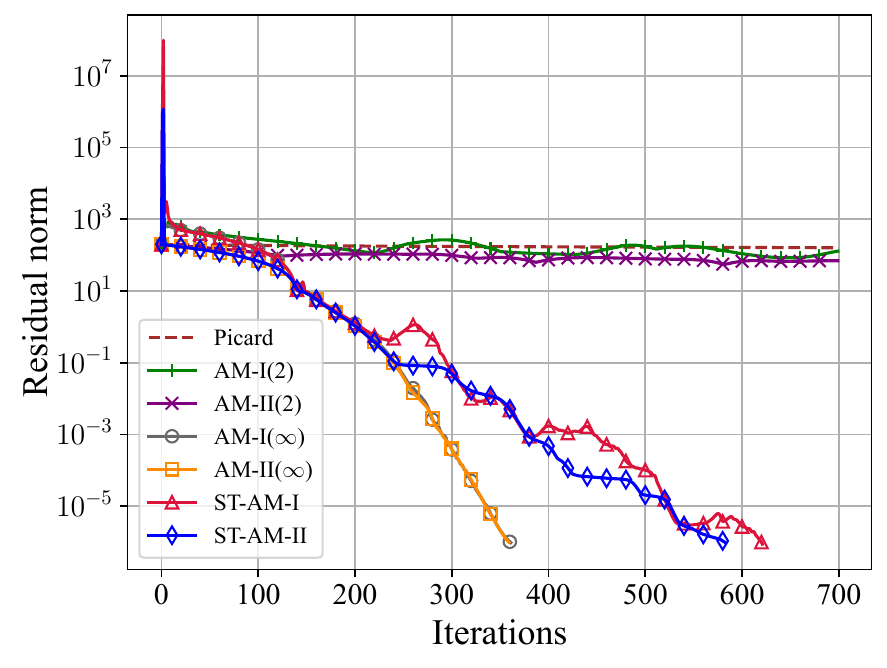}
}
\subfigure{
\includegraphics[width=0.46\textwidth]{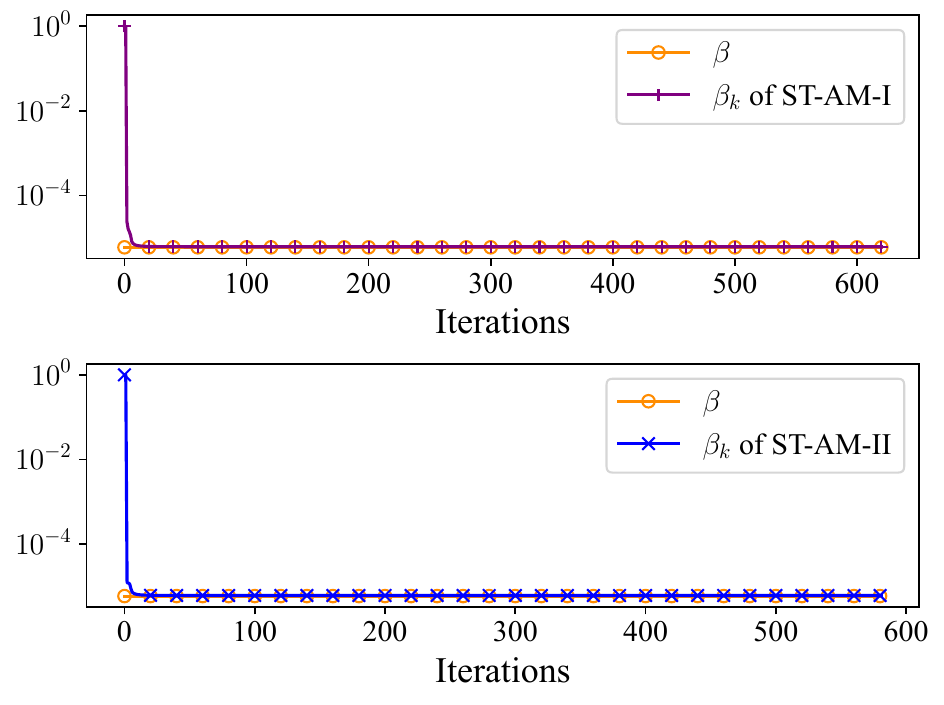}
}
\caption{The modified Bratu problem with $\alpha = 0$.  Left: $\|F(U_k)\|_2$ of each method. Right: $\beta$ of Picard iteration and  $\beta_k$ of ST-AM-I/ST-AM-II during the iterations. }
\label{fig:bratu:symmetric}
\end{figure} 

       
 
 The results in \Cref{fig:bratu:symmetric} show the convergence of each method and the choices of $\beta_k$ in ST-AM-I/ST-AM-II.  Observe that AM-I($2$) and AM-II($2$) perform similarly to the Picard iteration, which is reasonable since $m = 2$ is too small. On the other hand, ST-AM-I and ST-AM-II exhibit significantly faster convergence rates as predicted by \Cref{them:short_term_convergence} (no restart occurs in either method, i.e., $m_k = k$).  
 We also find that the $\beta_k$ of ST-AM-I/ST-AM-II quickly converges to $6.19\times 10^{-6}$. So the curve of ST-AM-I/ST-AM-II roughly coincides with that of the full-memory AM-I/AM-II in the early stage. However, due to the loss of orthogonality, ST-AM-I and ST-AM-II require more iterations to achieve $\|F\|_2\leq 10^{-6}$ than the full-memory methods.   

\subsection{Chandrasekhar H-equation}
 To check the effect of the restarting conditions \eqref{ineq:cond1}-\eqref{ineq:cond3}, we applied the restarted AM to solve  
 the Chandrasekhar H-equation considered in \cite{toth2015convergence,chen2019convergence}:  
 \begin{align}
  \mathcal{F}(H)(\mu) = H(\mu) - \left( 1-\frac{\omega}{2}\int_0^1 \frac{\mu H(\nu)d\nu}{\mu+\nu} \right)^{-1} = 0,
 \end{align}
 where $\omega\in [0,1]$ is a constant and the unknown is a continuously differentiable function $H$ defined in $[0,1]$.  Following \cite{toth2015convergence}, we discretized the equation with the composite midpoint rule. The resulting equation is 
 \begin{align}
  h^i = G(h)^i := \left( 1-\frac{\omega}{2N}\sum_{j=1}^N \frac{\mu_ih^j}{\mu_i+\mu_j} \right)^{-1}.
 \end{align}
 Here $h^i$ is the $i$-th component of $h\in\mathbb{R}^N$, $G(h)^i$ is the $i$-th component of $G(h) \in \mathbb{R}^N$, and $\mu_i = (i-1/2)/N$ for $1\leq i \leq N$. Define $F(h)=h-G(h) =0$, and $r_k = G(h_k)-h_k$ is the residual at $h_k$.   We set $N=500$ and considered $\omega=0.5, 0.99, 1$. The initial point was $h_0 = (1,1,\dots,1)^\mathrm{T}$. 
 Since the fixed-point operator $G$ is nonexpansive and the Picard iteration $h_{k+1} = G(h_k) = h_k+r_k$ converges in this case, we set $\beta_k = 1$ for both restarted AM methods. (The $h_k$ here has no relation with that in \Cref{sec:mix_params}.)

\begin{table}[t]
\caption{ Results of the restarted AM methods with different $\eta, m,\tau$. The table shows the iteration number $k$ to achieve $\|F(h_k)\|_2/\|F(h_0)\|_2 \leq 10^{-8}$ (``--'' means failure of the method).  }
\label{table:H_eq:restart_am}
\centering
\resizebox{0.8\textwidth}{!}{
\begin{tabular}{c c c c c c c c c}
\toprule
  \multicolumn{3}{c}{Hyperparameters} & \multicolumn{3}{c}{AM-I} & \multicolumn{3}{c}{AM-II}\\
 \cmidrule(lr){1-3}  \cmidrule(lr){4-6} \cmidrule(lr){7-9} 
 $\eta$ &  $m$ & $\tau$   & $\omega=0.50$ & $\omega=0.99$ & $\omega=1.00$ & $\omega=0.50$ & $\omega=0.99$ & $\omega=1.00$ \\  
  \cmidrule(lr){1-3} \cmidrule(lr){4-6}  \cmidrule(lr){7-9}
 $\infty$ & 4 & $10^{-15}$ & 5 & 11 & 40 & 5 & 10 & 30 \\
 $\infty$ & 4 & $10^{-32}$ & 5 & 11 & 40 & 5 & 10 & 30 \\
 $\infty$ & 100 & $10^{-15}$ & 5 & 12 & 34 & 5 & 11 & 27  \\
 $\infty$ & 100 & $10^{-32}$ & 5 & 10 & -- & 5 & 102 & 304 \\
 1 & 4 & $10^{-15}$ & 5 & 11 & 40 & 5 & 10 & 37 \\
 1 & 4 & $10^{-32}$ & 5 & 11 & 40 & 5 & 10 & 37 \\
 1 & 100 & $10^{-15}$ & 5 & 12 & 32 & 5 & 11 & 41  \\
 1 & 100 & $10^{-32}$ & 5 & 10 & 202 & 5 & 102 & 304 \\
\bottomrule
\end{tabular}}
\end{table}

  
  We studied the convergence of restarted AM with different settings of $m, \tau,$ and $\eta$. \Cref{table:H_eq:restart_am} tabulates the results.  This problem is hard to solve when $\omega$ approaches $1$.  
  For the easy case $\omega = 0.5$, the restarting conditions have neglectable effects on the convergence. However, for $\omega = 0.99$ and especially for $\omega=1$, the restarting conditions are critical, which help avoid the divergence of the iterations.  It is preferable to use a small $m$ in this problem.   
  By comparing the case $m=100, \tau=10^{-15}$ with the case $m=100, \tau=10^{-32}$, we find that using \eqref{ineq:cond2} to control the condition number of $V_k^\mathrm{T}Q_k$ is necessary. Also, setting $\eta=1$ in  \eqref{ineq:cond3} is helpful for AM-I.  
 
%
 
\subsection{Regularized logistic regression}
  To validate the effectiveness of ST-AM-I and ST-AM-II for solving unconstrained optimization problems, 
 we considered solving the regularized logistic regression:
 \begin{align}
  \min_{x\in\mathbb{R}^d}f(x):= \frac{1}{T}\sum_{i=1}^T \log (1+\exp(-y_ix^\mathrm{T}\xi_i))+\frac{w}{2}\|x\|_2^2,
 \end{align}
 where $\xi_i\in\mathbb{R}^d$ is the $i$-th input data sample and $y_i=\pm 1$ is the corresponding label. We used the ``madelon'' dataset from LIBSVM \cite{chang2011libsvm}, which contains 2000 data samples ($T=2000$) and 500 features ($d=500$).  We considered  $w=0.01$. The compared methods were gradient descent (GD), Nesterov's method \cite[Scheme~2.2.22]{Nesterov2018},  and the limited-memory AM methods with $m=2$ and $m=20$. For GD, we tuned the step size and set it as $1.6$. For AM methods, we also set $\beta_k = 1.6$. Let $x^*$ denote the minimizer. We used the smallest and the largest Ritz values of $\nabla^2 f(x^*)$ to approximate $\mu$ and $L$, which are required for Nesterov's method. 
  For the restarted ST-AM methods, we set $m=40$, $\tau=1\times 10^{-15}$, $\eta = \infty$ and applied the adaptive mixing strategy \eqref{eq:symmetric:beta_k} with $\beta_0 = 1$ to choose $\beta_k$. 
 
 \begin{figure}[t]
\centering 
\subfigure{
\includegraphics[width=0.46\textwidth]{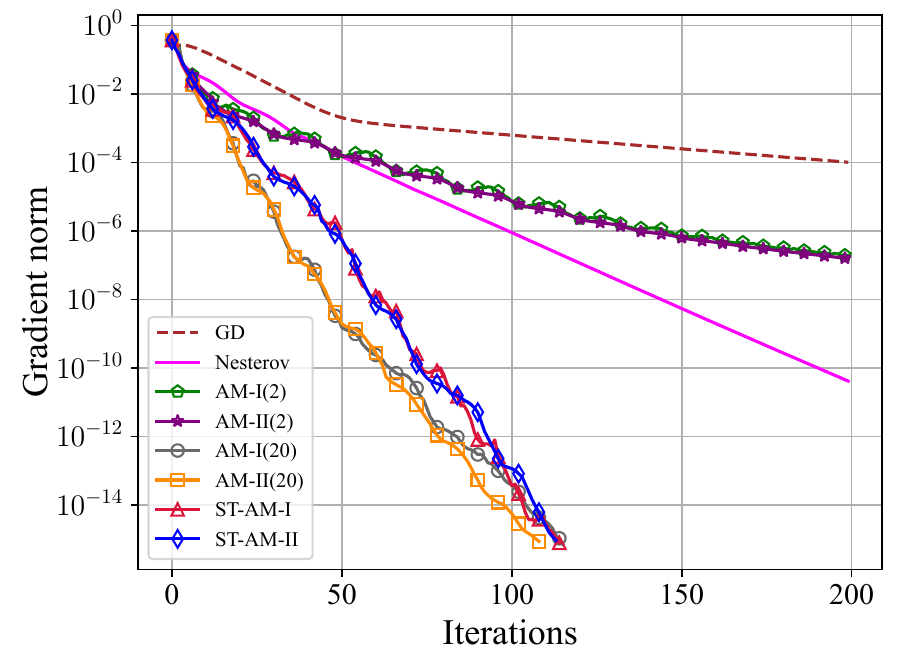}
}
\subfigure{
\includegraphics[width=0.46\textwidth]{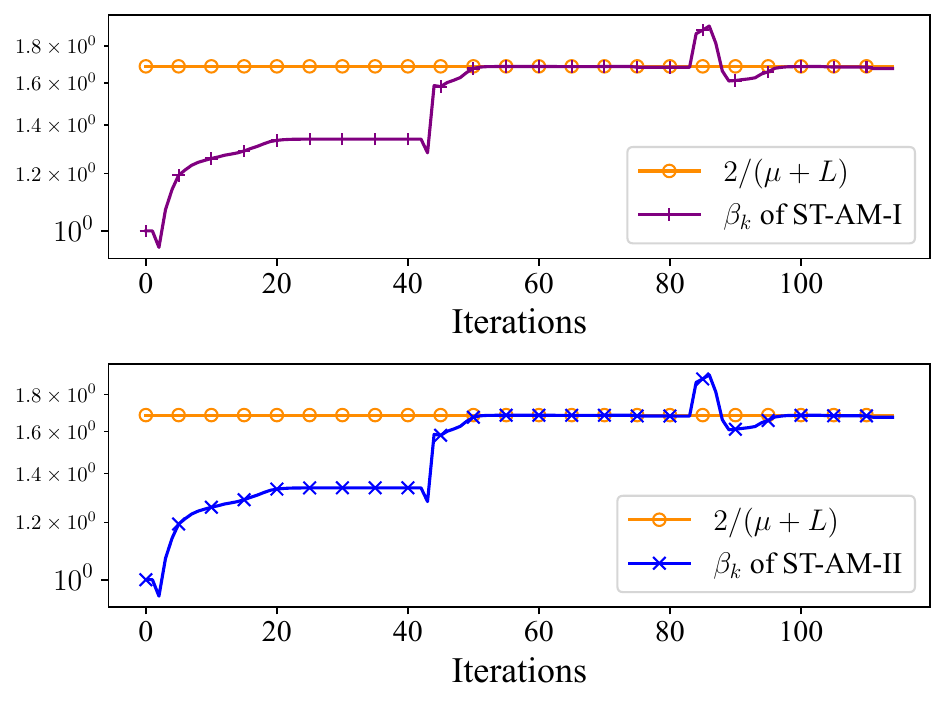}
}
\caption{The regularized logistic regression with $w = 0.01$. Left: $\|\nabla f(x_k)\|_2$ of each method. Right:  $2/(\mu+L)$, and the $\beta_k$ of ST-AM-I and ST-AM-II. }
\label{fig:logistic_regression}
\end{figure} 
 
  \Cref{fig:logistic_regression} shows the convergence of each method and the choices of $\beta_k$ in ST-AM-I and ST-AM-II.  Like AM-I(2) and AM-II(2), the ST-AM methods only use two vector pairs for the AM update. However, they have improved convergence, and the convergence rates are close to those of AM-I(20) and AM-II(20). Also,  the mixing parameters for restarted ST-AM methods do not need to be tuned manually. With the convergence of $x_k$ to the minimizer $x^*$, the $\beta_k$ is  adjusted to $2/(\mu+L)$.  
 
 
 
 

 \begin{table}[t]
  \caption{ Results of the restarted ST-AM methods with different $m, \tau$. The table shows the iteration number $k$ to achieve $\|\nabla f(x_k)\|_2\leq 10^{-15}$.}   \label{table:logistic_regression}
  \centering
  \resizebox{0.85\textwidth}{!}{
  \begin{tabular}{cccccc} 
    \toprule   	
    $(m,\tau)$  & $(28,10^{-15})$   & $(40,10^{-15})$  & $(1000,10^{-15})$  & $(1000,10^{-7})$  & $(1000,10^{-32})$    \\
    \midrule
      ST-AM-I   & 104   &  115   & 133 & 107   & 242       \\
      ST-AM-II  & 105   &  114  & 136 & 111   & 267    \\      
    \bottomrule
  \end{tabular}
  }
\end{table}
 
 In \Cref{table:logistic_regression}, we show the effects of the restarting conditions on ST-AM-I and ST-AM-II.  We only consider $m$ and $\tau$ in \eqref{ineq:cond1} and  \eqref{ineq:cond2} since both methods converge in solving this problem. The results suggest that well-chosen $m$ and $\tau$ can lead to improved convergence.

\section{Conclusions}
\label{sec:conclusions}
 
  In this paper, we study the restarted AM methods formulated with modified historical sequences and certain restarting conditions. Using a multi-step analysis, we extend the relationship between AM and Krylov subspace methods to nonlinear fixed-point problems. We prove that under reasonable assumptions, the long-term convergence behaviour of the restarted Type-I/Type-II AM is dominated by a minimization problem that also appears in the theoretical analysis of Krylov subspace methods. The convergence analysis provides a new assessment of the efficacy of AM in practice and justifies the potential local improvement of restarted Type-II AM over the fixed-point iteration. As a by-product of the restarted AM, the eigenvalues of the Jacobian can be efficiently estimated, based on which we can choose the mixing parameter adaptively. When the Jacobian is symmetric, we derive the short-term recurrence variants of restarted AM methods and the simplified eigenvalue estimation procedure. The short-term recurrence AM methods are memory-efficient and can significantly accelerate the fixed-point iterations. The experiments validate our theoretical results and the restarting conditions.

\appendix

\section{Proofs of Section~\ref{sec:restarted_am}} 
\subsection{Proof of Proposition~\ref{prop:modified_seqs}}  \label{subsec:proof:prop:modified_seqs}
 \begin{proof}  
  We prove the results by induction. 
  
  If $m_k=1$, then $p_k = \Delta x_{k-1}$, $q_k = \Delta r_{k-1}$. The Property~\ref{am_update:property1} and Property~\ref{am_update:property2} hold, and $v_k^\mathrm{T}q_k=z_k^\mathrm{T}\Delta r_{k-1}\neq 0$. Then by \eqref{eq:r_k_type2}, $\bar{r}_k \perp v_k$. Hence, \eqref{am_update} produces the same iterate as \eqref{eq:am_update}. 
  
  For $m_k>1$, suppose that from the $(k-m_k+1)$-th iteration to the $(k-1)$-th iteration,  Properties~\ref{am_update:property1}-\ref{am_update:property3} hold, and \eqref{am_update} produces the same iterates as \eqref{eq:am_update}. Then, at the $k$-th iteration, we first prove that $q_k^j \perp {\rm span}\{v_{k-m_k+1},\dots,v_{k-m_k+j}\}$, $j=1,\dots,m_k-1$, by induction. 
  
  For $j=1$, since $v_{k-m_k+1}^\mathrm{T}q_{k-m_k+1} \neq 0$, it follows that $q_k^1 \perp v_{k-m_k+1}$ due to \eqref{eq:q_k_type2}. 
  Consider $1<j\leq m_k-1$. Due to the inductive hypothesis,  $0\neq \det(Z_{k-1}^\mathrm{T}R_{k-1})=
  \det(S_{k-1}^\mathrm{T}V_{k-1}^\mathrm{T}Q_{k-1}S_{k-1})$. It follows that $\det(V_{k-1}^\mathrm{T}Q_{k-1}) \neq 0$. So the diagonal element $v_{k-m_k+j}^\mathrm{T}q_{k-m_k+j} \neq 0$, which together with \eqref{eq:q_k_type2}  implies $q_k^j \perp v_{k-m_k+j}$. Also,  both  $q_k^{j-1}$ and $q_{k-m_k+j}$ are orthogonal to $ {\rm span}\{v_{k-m_k+1},\dots,v_{k-m_k+j-1}\}$ by the inductive hypotheses. Thus, $q_k^j \perp {\rm span}\{v_{k-m_k+1},\dots,v_{k-m_k+j-1}\}$. We complete the induction. 
  
  Consequently, we have $q_k=q_k^{m_k-1} \perp {\rm span}\{v_{k-m_k+1},\dots,v_{k-1}\} = {\rm range}(V_{k-1})$. With the inductive hypothesis that $V_{k-1}^\mathrm{T}Q_{k-1}$ is lower triangular, it follows that $V_k^\mathrm{T}Q_k$ is lower triangular, namely the Property~\ref{am_update:property2}. 
  
  We prove that $q_k \neq 0$. Note that $q_k = \Delta r_{k-1}-Q_{k-1}\zeta_k$.  If $q_k = 0$, then $\Delta r_{k-1} \in {\rm range}(Q_{k-1}) = {\rm range}(R_{k-1})$, which is impossible since $R_k$ has full column rank due to $\det(Z_k^\mathrm{T}R_k) \neq 0$. Hence $q_k \neq 0$ and $R_k=Q_kS_k$, where $S_k$ is unit upper triangular. Since $p_k = \Delta x_{k-1}-P_{k-1}\zeta_k$, we also have $X_k = P_kS_k$. So the Property~\ref{am_update:property1} holds. 
  
  Next, we prove that $r_k^j \perp {\rm span}\{v_{k-m_k+1},\dots,v_{k-m_k+j}\}$, $j=1,\dots,m_k$, by induction. As  Properties~\ref{am_update:property1}-\ref{am_update:property2} hold at the $k$-th iteration, we have $0 \neq\det(Z_k^\mathrm{T}R_k) = \det(S_k^\mathrm{T}V_k^\mathrm{T}Q_kS_k)$, which implies that $\det(V_k^\mathrm{T}Q_k) \neq 0$. Hence $v_{k-m_k+j}^\mathrm{T}q_{k-m_k+j}\neq 0$ for $j=1,\dots,m_k$. Then we have $r_k^1 \perp v_{k-m_k+1}$ due to \eqref{eq:r_k_type2}. Consider $1< j\leq m_k$. Due to $v_{k-m_k+j}^\mathrm{T}q_{k-m_k+j} \neq 0$ and \eqref{eq:r_k_type2}, $r_k^j \perp v_{k-m_k+j}$. Also, by the inductive hypotheses, both $r_k^{j-1}$ and $q_{k-m_k+j}$ are orthogonal to  ${\rm span}\{v_{k-m_k+1},\dots,v_{k-m_k+j-1}\}$. It follows that $r_k^{j} \perp {\rm span}\{v_{k-m_k+1},\dots,v_{k-m_k+j-1}\}$. Thus, we complete the induction. It yields that $\bar{r}_k = r_k^{m_k}\perp {\rm range}(V_k)$, namely the Property~\ref{am_update:property3}. 
  
  Finally, the complete update of \eqref{am_update} is $x_{k+1}=x_k+G_kr_k$, where
  \begin{equation}
   G_k = \beta_kI - (P_k+\beta_kQ_k)(V_k^\mathrm{T}Q_k)^\mathrm{-1}V_k^\mathrm{T}.  \label{eq:G_k}
  \end{equation}
 Here, we use Property~\ref{am_update:property3} which implies $\Gamma_k = (V_k^\mathrm{T}Q_k)^{-1}V_k^\mathrm{T}r_k$.  Then with $P_k = X_kS_k^{-1}$ and $Q_k = R_kS_k^{-1}$, the equivalent form of \eqref{eq:G_k} is $G_k = \beta_kI-(X_k+\beta_kR_k)(Z_k^\mathrm{T}R_k)^{-1}Z_k^\mathrm{T}$, which is the original AM update \eqref{eq:am_update}. So \eqref{am_update} produces the same iterate as \eqref{eq:am_update}.  As a result, we complete the induction. 
 \end{proof}

\section{Proofs of Section~\ref{sec:convergence}}
 
 \subsection{Proof of Proposition~\ref{prop:linear}}  \label{subsec:proof:prop:linear}
\begin{proof}
 1. The Properties~\ref{prop:linear:property1}-\ref{prop:linear:property3} are known results \cite{walker2011anderson}. We give the proof here for completeness.
 
  The definition of $g$ suggests that the residual $r_k = g(x_k)-x_k=b-Ax_k$ for $k\geq 0$ and  $R_k = -AX_k$ for $k\geq 1$. Recall that each $\Gamma_j$ is determined by solving
  \begin{equation}
  \bar{r}_j = r_j - R_j\Gamma_j \perp {\rm range}(Z_j),   \label{eq:proj_cond}
 \end{equation}
 where $1\leq j\leq k$ and $k\geq 1$. 
 The condition $\det(Z_j^\mathrm{T}R_j)\neq 0$ ensures that $\Gamma_j$ is uniquely determined. Thus the AM updates are well defined. 
  
  Since $A$ is nonsingular and $R_k = -AX_k$, it follows that ${\rm rank}(X_k) = {\rm rank}(R_k)$. Then, due to $\det(Z_k^\mathrm{T}R_k)\neq 0$, we have ${\rm rank}(Z_k)={\rm rank}(R_k) = k$. So $ {\rm rank}(X_k)=k $. 
 We first prove $ {\rm range}(X_k)=\mathcal{K}_k(A,r_0) $
   by induction. 
 
 First, $\Delta x_0 = \beta_0r_0 $ since $x_1 = x_0+\beta_0r_0$. If $ k=1$, then the proof is complete. 
Suppose that $ k>1 $ and $ {\rm range}(X_{k-1})=\mathcal{K}_{k-1}(A,r_0) $. Define $e^k\in\mathbb{R}^k$ to be the vector with all elements being ones. 
 From the AM update \eqref{eq:am_update}, we have 
 \begin{align*}
\Delta x_{k-1}
&= \beta_{k-1}r_{k-1}-(X_{k-1}+\beta_{k-1}R_{k-1})\Gamma_{k-1} \nonumber \\
&= \beta_{k-1}(b-Ax_{k-1})-(X_{k-1}-\beta_{k-1}AX_{k-1})\Gamma_{k-1} \nonumber \\
&=\beta_{k-1}b-\beta_{k-1}A(x_0+\Delta x_0+\cdots+\Delta x_{k-2}) -(X_{k-1}-\beta_{k-1}AX_{k-1})\Gamma_{k-1}  \nonumber \\
&=\beta_{k-1}r_0-\beta_{k-1}AX_{k-1}e^{k-1}-(X_{k-1}-\beta_{k-1}AX_{k-1})\Gamma_{k-1}. 
\end{align*}
 Since $ {\rm range}(X_{k-1}) =  \mathcal{K}_{k-1}(A,r_0)$, we have $ {\rm range}(AX_{k-1})\subseteq \mathcal{K}_k(A,r_0)$. Also, noting that $r_0\in\mathcal{K}_{k-1}(A,r_0)$, we have $ \Delta x_{k-1}\in \mathcal{K}_k(A,r_0) $. Thus, $ {\rm range}(X_k) \subseteq \mathcal{K}_k(A,r_0) $. Since $ {\rm rank}(X_k) = k$, it follows that $ {\rm range}(X_k) = \mathcal{K}_k(A,r_0) $, thus completing the induction. 
 

 Since $ r_k = b-Ax_k=b-A(x_0+X_ke^k)=r_0-AX_ke^k $, it follows that 
$ r_k-R_k\Gamma = r_k + AX_k\Gamma = r_0-AX_ke^k + AX_k\Gamma = r_0 -AX_k\tilde{\Gamma}$, where $ \tilde{\Gamma}=e^k-\Gamma $, for $\forall ~ \Gamma\in\mathbb{R}^k$. So
$\Gamma_k$ solves (\ref{eq:proj_cond}) for $j=k$ if and only if $ \tilde{\Gamma}_k=e^k-\Gamma_k $  solves 
\begin{align}
  r_0 - AX_k\tilde{\Gamma}_k  \perp {\rm range}(Z_k).   \label{eq:krylov}
\end{align}
 Since ${\rm range}(X_k) = \mathcal{K}_k(A,r_0)$, the condition (\ref{eq:krylov}) is equivalent to 
\begin{align}
 r_0 - Az \perp {\rm range}(Z_k)  ~~ \mbox{s.t.} ~~ z\in \mathcal{K}_k(A,r_0).     \label{eq:galerkin}
\end{align}
 Here ${\rm range}(Z_k) = \mathcal{K}_k(A,r_0)$ for the Type-I method, and ${\rm range}(Z_k) = A\mathcal{K}_k(A,r_0)$ for the Type-II method. Since the initializations are identical, the conditions \eqref{eq:galerkin} for Type-I and Type-II methods are the Petrov-Galerkin conditions for the Arnoldi's method and GMRES, respectively. 
 Due to the nonsingularity of $Z_k^\mathrm{T}R_k$, the solution of (\ref{eq:krylov}) is also unique. Therefore, we have
\begin{align*}
\bar{x}_k =x_k-X_k\Gamma_k=x_k-X_k(e^k-\tilde{\Gamma}_k) =x_0+X_k\tilde{\Gamma}_k = x_k^{\text{\rm A}}, 
\end{align*}
 for the Type-I method, and $\bar{x}_k =x_k^{\text{\rm G}} $ for the Type-II method. 
   
 2. Consider the case that $A$ is positive definite, and the algorithm has not found the exact solution, i.e. $r_j \neq 0$ for $j = 0,\dots,k$. We prove the result by induction. 
  
  If $k=1$, then $\Delta x_0 = \beta_0 r_0$, and $\Delta r_0 = -\beta_0 Ar_0$. Hence $Z_1^\mathrm{T}R_1 = \Delta x_0^\mathrm{T}\Delta r_0 = -\beta_0^2 r_0^\mathrm{T}Ar_0$ for the Type-I method; $Z_1^\mathrm{T}R_1 = \Delta r_0^\mathrm{T}\Delta r_0 = \beta_0^2 r_0^\mathrm{T}A^\mathrm{T}Ar_0$ for the Type-II method. Since $r_0 \neq 0$ and $A$ is positive definite, it follows that $\det(Z_1^\mathrm{T}R_1) \neq 0$. 
  
  For $k>1$, suppose that $\det(Z_{k-1}^\mathrm{T}R_{k-1})\neq 0$. It indicates ${\rm rank}(R_{k-1}) = k-1$, thus ${\rm rank}(X_{k-1}) = k-1$. We prove $\det(Z_k^\mathrm{T}R_k) \neq 0$ by contradiction. 
  
  If $\det(Z_k^\mathrm{T}R_k) = 0$, then there exists a nonzero $y\in\mathbb{R}^k$ such that $Z_k^\mathrm{T}R_ky = 0$. Then $y^\mathrm{T}Z_k^\mathrm{T}R_ky = 0$. Note that $Z_k^\mathrm{T}R_k = X_k^\mathrm{T}R_k = -X_k^\mathrm{T}AX_k$ for the Type-I method, and $Z_k^\mathrm{T}R_k = R_k^\mathrm{T}R_k = X_k^\mathrm{T}A^\mathrm{T}AX_k$ for the Type-II method. Since $A$ is positive definite, we have $X_ky = 0$, which implies that $X_k$ is rank deficient. As $X_{k-1}$ has full column rank, it yields $\Delta x_{k-1} = -X_{k-1}\Gamma_{k-1}+\beta_{k-1}\bar{r}_{k-1} \in {\rm range}(X_{k-1})$. Hence $\bar{r}_{k-1}\in {\rm range}(X_{k-1})$. So $\bar{r}_{k-1}=X_{k-1}\xi$ for some $\xi\in\mathbb{R}^{k-1}$. Since $\det(Z_{k-1}^\mathrm{T}R_{k-1}) \neq 0$, the condition $\bar{r}_{k-1}=r_{k-1}-R_{k-1}\Gamma_{k-1} \perp Z_{k-1}$ has a unique solution. Thus
  \begin{align}
   0 = \bar{r}_{k-1}^\mathrm{T}Z_{k-1}\xi = \xi^\mathrm{T}X_{k-1}^\mathrm{T}Z_{k-1}\xi.    \label{eq:r_bar_Z_k}
  \end{align}
 For the Type-I method, $X_{k-1}^\mathrm{T}Z_{k-1} = X_{k-1}^\mathrm{T}X_{k-1}$; for the Type-II method, $X_{k-1}^\mathrm{T}Z_{k-1} = X_{k-1}^\mathrm{T}R_{k-1} = -X_{k-1}^\mathrm{T}AX_{k-1}$. Since $X_{k-1}$ has full column rank and $A$ is positive definite, it follows from \eqref{eq:r_bar_Z_k} that $\xi = 0$ for both cases, which yields $\bar{r}_{k-1} = 0$. However, it is impossible because when $\bar{r}_{k-1} = 0$, we have $x_k = \bar{x}_{k-1}$ and $r_k = \bar{r}_{k-1} = 0$, which contradicts the assumption that $r_k \neq 0$. Therefore, $\det(Z_k^\mathrm{T}R_k) \neq 0$. We complete the induction. 
 
 3. Since $\det(Z_j^\mathrm{T}R_j) \neq 0$, $j=1,\dots,k$, it follows from \Cref{prop:modified_seqs} that the constructions of the modified historical  sequences $P_k$ and $Q_k$ are well defined. The Property~\ref{am_update:property1} in \Cref{prop:modified_seqs} further yields the relation \eqref{prop:linear:modified_seqs}.
 \end{proof}

\subsection{Proof of Lemma~\ref{lemma:diff}}   \label{subsec:proof:lemma:diff}
\begin{proof}
 
 
 The proof follows the technique in \cite{wei2022}. 
 Besides \eqref{lemma:diff:r_k} and \eqref{lemma:diff:x_k}, we shall also prove the following relations.  
 \begin{align}  
  & x_k\in\mathcal{B}_{\hat{\rho}}(x^*),    \label{lemma:x_k}   \\
  & |\zeta_k^{(j)}| = \mathcal{O}(1),  &  |\hat{\zeta}_k^{(j)}-\zeta_k^{(j)}|=\hat{\kappa}\mathcal{O}(\|x_{k-m_k}-x^*\|_2),  \label{lemma:diff:zeta_k}  \\
  & \|p_k\|_2 = \mathcal{O}(\|x_{k-m_k}-x^*\|_2), &  \|q_k\|_2 = \mathcal{O}(\|x_{k-m_k}-x^*\|_2),   \label{lemma:p_k}  \\
  & \|p_k-\hat{p}_k\|_2=\hat{\kappa}\mathcal{O}(\|x_{k-m_k}-x^*\|_2^2), &
    \|q_k-\hat{q}_k\|_2=\hat{\kappa}\mathcal{O}(\|x_{k-m_k}-x^*\|_2^2),  \label{lemma:diff:p_k}  \\
  & |\Gamma_k^{(j)}| = \mathcal{O}(1), &  |\hat{\Gamma}_k^{(j)}-\Gamma_k^{(j)}|=\hat{\kappa}\mathcal{O}(\|x_{k-m_k}-x^*\|_2),   \label{lemma:diff:Gamma_k} \\
  & \|\bar{x}_k-\bar{\hat{x}}_k\|_2 = \hat{\kappa}\mathcal{O}(\|x_{k-m_k}-x^*\|_2^2),  & \|\bar{r}_k-\bar{\hat{r}}_k\|_2 = \hat{\kappa}\mathcal{O}(\|x_{k-m_k}-x^*\|_2^2),   \label{lemma:diff:bar_x_k}
 \end{align}
 where $j=0,\dots,m_k$. 
 Here, for convenience, 
 we define $\hat{\zeta}_k^{(0)}= \zeta_k^{(0)}=\hat{\zeta}_k^{(m_k)}= \zeta_k^{(m_k)}=0$, $\hat{\Gamma}_k^{(0)}=\Gamma_k^{(0)} = 0$; when $m_k = 0$, define $\hat{p}_{k}=\hat{q}_{k}= p_{k} = q_{k} = \mathbf{0}$, $\bar{x}_{k}=x_{k}$, $\bar{r}_{k}=r_{k}$, and $\bar{\hat{x}}_{k}=\hat{x}_{k}$, $\bar{\hat{r}}_{k}=\hat{r}_{k}$; when $m_k>0$, 
 $\bar{\hat{x}}_k = \hat{x}_k-\hat{P}_k\hat{\Gamma}_k$, $\bar{\hat{r}}_k = \hat{r}_k-\hat{Q}_k\hat{\Gamma}_k$.  Then, the two processes to generate $\{x_k\}$ and $\{\hat{x}_k\}$ are
 \begin{align*}
  x_{k+1} = \bar{x}_k+\beta_k\bar{r}_k,   ~~ \mbox{and}  ~~ 
  \hat{x}_{k+1} = \bar{\hat{x}}_k+\beta_k\bar{\hat{r}}_k. 
 \end{align*}
 

 We first prove \eqref{lemma:x_k}. 
 Due to \eqref{assum:jacobian}, we have the following relation:
 \begin{align}
  \mu\|x_k-x^*\|_2 \leq \|r_k\|_2=\|h(x_k)-h(x^*)\|_2 \leq L\|x_k-x^*\|_2.  \label{ineq:r_k}
\end{align}  
  Choose $\|x_0-x^*\|_2 \leq \frac{\mu\hat{\rho}}{\eta_0L}$. With the condition $\|r_k\|_2 \leq \eta_0\|r_0\|_2$, we obtain
 \begin{align}
  \|x_k-x^*\|_2 \leq \frac{1}{\mu}\|r_k\|_2 \leq \frac{\eta_0}{\mu}\|r_0\|_2
   \leq \frac{\eta_0 L}{\mu}\|x_0-x^*\|_2\leq \frac{\eta_0 L}{\mu}\cdot \frac{\mu\hat{\rho}}{\eta_0L} = \hat{\rho},   \label{ineq:bound:x_k_r_k}
 \end{align}
 namely \eqref{lemma:x_k}. The \eqref{ineq:bound:x_k_r_k} also implies we can choose sufficiently small $\|x_0-x^*\|_2$ to ensure $\|x_{k-m_k}-x^*\|_2 \leq \frac{\eta_0 L}{\mu}\|x_0-x^*\|_2$ is  sufficiently small. 
 Then, we prove \eqref{lemma:diff:r_k}, \eqref{lemma:diff:x_k}, and \eqref{lemma:diff:zeta_k}-\eqref{lemma:diff:bar_x_k} by induction.
 
 For $k=0$, the relations \eqref{lemma:diff:zeta_k}-\eqref{lemma:diff:Gamma_k} clearly hold. Besides, due to \eqref{assum:jacobian_continue}, we have
  $\|r_0-\hat{r}_0\|_2 \leq\frac{1}{2}\hat{\kappa}\|x_0-x^*\|_2^2$, namely \eqref{lemma:diff:r_k}. Since $x_0 = \hat{x}_0$, the \eqref{lemma:diff:bar_x_k} also holds. Then \eqref{lemma:diff:x_k} follows from 
 \begin{align*}
  \|x_1-\hat{x}_1\|_2 = \|x_0+\beta_0 r_0 - (\hat{x}_0+\beta_0 \hat{r}_0)\|_2
    = \beta_0 \|r_0-\hat{r}_0\|_2 
    \leq \frac{\beta_0\hat{\kappa}}{2}\|x_0-x^*\|_2^2. 
 \end{align*}
  
 Suppose that $k\geq 1$, and as an inductive hypothesis, the relations  \eqref{lemma:diff:r_k}, \eqref{lemma:diff:x_k}, and \eqref{lemma:diff:zeta_k}-\eqref{lemma:diff:bar_x_k} hold for $i=0,\dots,k-1.$ Consider the $k$-th  iteration. 
  
 If $m_k=0$, i.e., a restarting condition is met at the beginning of the $k$-th iteration, then $\hat{x}_k = x_k$. The same as the case that $k=0$, \eqref{lemma:diff:r_k}, \eqref{lemma:diff:x_k}, and \eqref{lemma:diff:zeta_k}-\eqref{lemma:diff:bar_x_k} hold. 
 
 Consider the nontrivial case that $m_k>0$.  Due to \eqref{ineq:cond3}, we have  
 \begin{equation*}
  \|x_j-x^*\|_2\leq \frac{1}{\mu}\|r_j\|_2 \leq \frac{\eta}{\mu}\|r_{k-m_k}\|_2
    \leq \frac{\eta L}{\mu}\|x_{k-m_k}-x^*\|_2,  ~~ j=k-m_k+1,\dots,k.   
 \end{equation*}
 Therefore, 
 \begin{align}
 \|x_j-x^*\|_2 = \mathcal{O}(\|x_{k-m_k}-x^*\|_2),  ~~ j=k-m_k,\dots,k.   \label{ineq:bound:relative_x_j} 
 \end{align}
 

 Since $x_k\in\mathcal{B}_{\hat{\rho}}(x^*)$, it follows that 
 \begin{align}
  \|r_k - \hat{r}_k\|_2 &= \|h(x_k)-\hat{h}(\hat{x}_k)\|_2
  	\leq \|h(x_k)-\hat{h}(x_k)\|_2 + \|\hat{h}(x_k)-\hat{h}(\hat{x}_k)\|_2   \nonumber \\
  	& = \|h(x_k)-h'(x^*)(x_k-x^*)\|_2+ \|h'(x^*)(x_k-\hat{x}_k)\|_2   \nonumber \\
  	& \leq \frac{1}{2}\hat{\kappa}\|x_k-x^*\|_2^2+L\|x_k-\hat{x}_k\|_2
  	  = \hat{\kappa}\mathcal{O}(\|x_{k-m_k}-x^*\|_2^2),   \label{proof:lemma:diff:r_k}
 \end{align}
 where the second inequality is due to \eqref{assum:jacobian_continue} and  \eqref{assum:jacobian}, and the last equality is due to \eqref{ineq:bound:relative_x_j} and the inductive hypothesis  \eqref{lemma:diff:x_k}. 
 Thus, the relation \eqref{lemma:diff:r_k} holds. 
 
 Since the condition \eqref{ineq:cond2} holds, we have
   $|v_k^\mathrm{T}q_k| \geq \tau |v_{k-m_k+1}^\mathrm{T}q_{k-m_k+1}| $. We discuss the Type-I method and the Type-II method separately. Using the fact that the $(k-m_k)$-th iteration is $x_{k-m_k+1} = x_{k-m_k}+\beta_{k-m_k}r_{k-m_k}$, 
  we have that 
 \begin{align*}
  |v_k^\mathrm{T}q_k| &\geq \tau |p_{k-m_k+1}^\mathrm{T}q_{k-m_k+1}|
         = \tau |\Delta x_{k-m_k}^\mathrm{T}\Delta r_{k-m_k}|       \nonumber \\
        & = \tau \left|\Delta x_{k-m_k} ^\mathrm{T} \int_0^1 h'(x_{k-m_k}+t\Delta x_{k-m_k})\Delta x_{k-m_k}dt\right|    \nonumber \\
        & \geq \tau\mu \|\Delta x_{k-m_k}\|_2^2
         = \tau\mu \beta_{k-m_k}^2\|r_{k-m_k}\|_2^2  
         \geq \tau \mu^3 \beta_{k-m_k}^2 \|x_{k-m_k}-x^*\|_2^2,  
 \end{align*}
 for the Type-I method, where the second inequality is due to \eqref{assum:hermite_part} and the third inequality is due to \eqref{ineq:r_k}. For the Type-II method, 
 \begin{align*}
  |v_k^\mathrm{T}q_k| &\geq \tau |q_{k-m_k+1}^\mathrm{T} q_{k-m_k+1}|
         = \tau \| \Delta r_{k-m_k} \|_2^2 \geq \tau \mu^2 \|\Delta x_{k-m_k}\|_2^2   \nonumber \\
    & = \tau\mu^2\beta_{k-m_k}^2\|r_{k-m_k}\|_2^2 
     \geq \tau\mu^4\beta_{k-m_k}^2\|x_{k-m_k}-x^*\|_2^2. 
 \end{align*}
 Then, define $\underline{\kappa} = \tau\mu^3\beta^2$ for the Type-I method, and $\underline{\kappa} = \tau\mu^4\beta^2$ for the Type-II method. Since no restart has occurred in the last $m_k$ iterations, we have
 \begin{equation}
  |v_i^\mathrm{T}q_i|_2 \geq \underline{\kappa}\|x_{k-m_k}-x^*\|_2^2, ~ \mbox{for} ~ i=k-m_k+1,\dots,k.   \label{ineq:vq_lower}
 \end{equation}
 
 Now, we prove \eqref{lemma:diff:zeta_k}. 
 We shall prove an auxiliary relation:
 \begin{align}
  \|q_k^j\|_2 = \mathcal{O}(\|x_{k-m_k}-x^*\|_2), \qquad 
  \|q_k^j-\hat{q}_k^j\|_2 = \hat{\kappa}\mathcal{O}(\|x_{k-m_k}-x^*\|_2^2),    \label{lemma:diff:q_k_j}
 \end{align}
 for $j=0,\dots,m_k-1$. We conduct the proof by induction. 
 
 For $j=0$, \eqref{lemma:diff:zeta_k} holds due to $\zeta_k^{(0)} = \hat{\zeta}_k^{(0)} = 0$. Since $q_k^0=\Delta r_{k-1}, \hat{q}_k^{0}=\Delta \hat{r}_{k-1}$, it follows that
 \begin{align*}
  \|q_k^0\|_2 \leq \|r_k\|_2+\|r_{k-1}\|_2 \leq 2\eta\|r_{k-m_k}\|_2
     =\mathcal{O}(\|x_{k-m_k}-x^*\|_2),
 \end{align*}
 which is due to \eqref{ineq:cond3} and \eqref{ineq:r_k}. Also, from \eqref{proof:lemma:diff:r_k} and \eqref{lemma:diff:r_k}, we have 
 \begin{align*}
  \|q_k^0-\hat{q}_k^0\|_2 \leq \|r_k-\hat{r}_k\|_2+\|r_{k-1}-\hat{r}_{k-1}\|_2
     =\hat{\kappa}\mathcal{O}(\|x_{k-m_k}-x^*\|_2^2).          
 \end{align*}
 Hence, the \eqref{lemma:diff:zeta_k} and \eqref{lemma:diff:q_k_j} hold when $j=0$. 
 
 Suppose that $j\geq 1$, and \eqref{lemma:diff:zeta_k} and \eqref{lemma:diff:q_k_j} hold for $\ell =0,\dots,j-1$. Consider the $j$-th step  in \eqref{eq:q_k_type2}. Due to  \eqref{ineq:vq_lower} and the inductive hypotheses \eqref{lemma:p_k} and  \eqref{lemma:diff:q_k_j}, we obtain
 \begin{align}
  |\zeta_k^{(j)}| 
  \leq \frac{\|v_{k-m_k+j}\|_2\|q_k^{j-1}\|_2}{\underline{\kappa}\|x_{k-m_k}-x^*\|_2^2} = \frac{\mathcal{O}(\|x_{k-m_k}-x^*\|_2^2)}{\underline{\kappa}\|x_{k-m_k}-x^*\|_2^2} = \mathcal{O}(1).    \label{ineq:zeta_k_j}
\end{align}  
 Next, if $v_{k-m_k+j}^\mathrm{T}q_k^{j-1}\neq 0$, then 
 \begin{align}
   & \quad |\zeta_k^{(j)}-\hat{\zeta}_k^{(j)}|    \nonumber \\
   &= |\zeta_k^{(j)}|\cdot \left| 1- \frac{\hat{\zeta}_k^{(j)}}{\zeta_k^{(j)}}\right| 
   = |\zeta_k^{(j)}|\cdot \left| 1- \frac{\hat{v}_{k-m_k+j}^\mathrm{T}\hat{q}_k^{j-1}}{v_{k-m_k+j}^\mathrm{T}q_k^{j-1}}\cdot \frac{v_{k-m_k+j}^\mathrm{T}q_{k-m_k+j}}{\hat{v}_{k-m_k+j}^\mathrm{T}\hat{q}_{k-m_k+j}} \right|   \nonumber \\
   &= |\zeta_k^{(j)}|\cdot |a(1-b)+b| \leq |\zeta_k^{(j)}|\cdot (|a|+|b|+|ab|),   \label{ineq:diff:zeta_k} 
 \end{align}
 where $a:=1-\frac{\hat{v}_{k-m_k+j}^\mathrm{T}\hat{q}_k^{j-1}}{v_{k-m_k+j}^\mathrm{T}q_k^{j-1}}$ and $ b:=1-\frac{v_{k-m_k+j}^\mathrm{T}q_{k-m_k+j}}{\hat{v}_{k-m_k+j}^\mathrm{T}\hat{q}_{k-m_k+j}}$. We have
\begin{align}
  |\zeta_k^{(j)}|\cdot |a| =  
  \left| \frac{v_{k-m_k+j}^\mathrm{T}q_k^{j-1}-\hat{v}_{k-m_k+j}^\mathrm{T}\hat{q}_k^{j-1}}{v_{k-m_k+j}^\mathrm{T}q_{k-m_k+j}} \right|.  \label{eq:zeta_k_a}
 \end{align}
 From \eqref{lemma:p_k}, \eqref{lemma:diff:p_k}, and \eqref{lemma:diff:q_k_j}, we obtain
\begin{equation*}
   |v_{k-m_k+j}^\mathrm{T}(q_k^{j-1}-\hat{q}_k^{j-1})| \leq \|v_{k-m_k+j}\|_2 \|q_k^{j-1}-\hat{q}_k^{j-1}\|_2 = \hat{\kappa}\mathcal{O}(\|x_{k-m_k}-x^*\|_2^3), 
  \end{equation*}
 and 
 \begin{align*}
  & \quad |(v_{k-m_k+j}-\hat{v}_{k-m_k+j})^\mathrm{T}\hat{q}_k^{j-1}|   \nonumber \\
  & \leq |(v_{k-m_k+j}-\hat{v}_{k-m_k+j})^\mathrm{T}q_k^{j-1}|
           + |(v_{k-m_k+j}-\hat{v}_{k-m_k+j})^\mathrm{T}(q_k^{j-1}-\hat{q}_k^{j-1})|   \nonumber \\
  & \leq \hat{\kappa}\mathcal{O}(\|x_{k-m_k}-x^*\|_2^3) + \hat{\kappa}^2\mathcal{O}(\|x_{k-m_k}-x^*\|_2^4) = \hat{\kappa}\mathcal{O}(\|x_{k-m_k}-x^*\|_2^3).
 \end{align*}
 Then, it follows that
 \begin{align}
   &| v_{k-m_k+j}^\mathrm{T}q_k^{j-1}-\hat{v}_{k-m_k+j}^\mathrm{T}\hat{q}_k^{j-1} |  
    \leq |v_{k-m_k+j}^\mathrm{T}(q_k^{j-1}-\hat{q}_k^{j-1})|   \nonumber \\
    & \qquad \qquad\qquad +|(v_{k-m_k+j}-\hat{v}_{k-m_k+j})^\mathrm{T}\hat{q}_k^{j-1}|
  = \hat{\kappa}\mathcal{O}(\|x_{k-m_k}-x^*\|_2^3).   \label{ineq:diff:v_q_1}
 \end{align}
 Combining \eqref{eq:zeta_k_a}, \eqref{ineq:diff:v_q_1}, and \eqref{ineq:vq_lower} yields 
 \begin{equation}
   |\zeta_k^{(j)}|\cdot |a| \leq \frac{\hat{\kappa}\mathcal{O}(\|x_{k-m_k}-x^*\|_2^3)}{\underline{\kappa}\|x_{k-m_k}-x^*\|_2^2} = \hat{\kappa}\mathcal{O}(\|x_{k-m_k}-x^*\|_2).    \label{ineq:zeta_k_a}
 \end{equation}
 Similar to \eqref{ineq:diff:v_q_1}, the following bound holds:
 \begin{align}
  |v_{k-m_k+j}^\mathrm{T}q_{k-m_k+j}-\hat{v}_{k-m_k+j}^\mathrm{T}\hat{q}_{k-m_k+j}| = \hat{\kappa}\mathcal{O}(\|x_{k-m_k}-x^*\|_2^3).   \label{ineq:diff:v_q_2}
 \end{align}
 Besides, 
 \begin{align}
  & \quad |\hat{v}_{k-m_k+j}^\mathrm{T}\hat{q}_{k-m_k+j}|   \nonumber \\
  &\geq |v_{k-m_k+j}^\mathrm{T}q_{k-m_k+j}|-
  |v_{k-m_k+j}^\mathrm{T}q_{k-m_k+j}-\hat{v}_{k-m_k+j}^\mathrm{T}\hat{q}_{k-m_k+j}|    \nonumber \\
  & \geq \underline{\kappa}\|x_{k-m_k}-x^*\|_2^2-\hat{\kappa}c_1\|x_{k-m_k}-x^*\|_2^3 \geq \frac{1}{2}\underline{\kappa}\|x_{k-m_k}-x^*\|_2^2,    \label{ineq:vq_lower_2}
 \end{align}
 where the existence of $c_1$ is guaranteed by \eqref{ineq:diff:v_q_2}, and the last inequality holds if $\|x_{k-m_k}-x^*\|_2\leq \frac{\underline{\kappa}}{2\hat{\kappa}c_1}$, which can be obtained by choosing $\|x_0-x^*\|_2\leq \frac{\mu\underline{\kappa}}{2\hat{\kappa}\eta_0Lc_1}$ since 
 $\|x_{k-m_k}-x^*\|_2 \leq \frac{\eta_0L}{\mu}\|x_0-x^*\|_2 $ by \eqref{ineq:bound:x_k_r_k}. 
 From \eqref{ineq:diff:v_q_2} and \eqref{ineq:vq_lower_2}, it follows that 
 \begin{align}
  |b| = \left| \frac{\hat{v}_{k-m_k+j}^\mathrm{T}\hat{q}_{k-m_k+j}-v_{k-m_k+j}^\mathrm{T}q_{k-m_k+j}}{\hat{v}_{k-m_k+j}^\mathrm{T}\hat{q}_{k-m_k+j}} \right| = \hat{\kappa}\mathcal{O}(\|x_{k-m_k}-x^*\|_2).   \label{ineq:b}
 \end{align}
 As a result, by \eqref{ineq:zeta_k_a}, \eqref{ineq:b}, \eqref{ineq:zeta_k_j}, and \eqref{ineq:diff:zeta_k}, we obtain
 \begin{equation*}
   |\zeta_k^{(j)}-\hat{\zeta}_k^{(j)}| = \hat{\kappa}\mathcal{O}(\|x_{k-m_k}-x^*\|_2). 
 \end{equation*}
 Now consider the case that $v_{k-m_k+j}^\mathrm{T}q_k^{j-1} = 0$. It is clear that $\zeta_k^{(j)} = 0$. Then
 \begin{align*}
  |\zeta_k^{(j)}-\hat{\zeta}_k^{(j)}|= \left|\frac{\hat{v}_{k-m_k+j}^\mathrm{T}\hat{q}_k^{j-1}}{\hat{v}_{k-m_k+j}^\mathrm{T}\hat{q}_{k-m_k+j}}\right| \leq \frac{\hat{\kappa}\mathcal{O}(\|x_{k-m_k}-x^*\|_2^3)}{\frac{1}{2}\underline{\kappa}\|x_{k-m_k}-x^*\|_2^2} = \hat{\kappa}\mathcal{O}(\|x_{k-m_k}-x^*\|_2). 
 \end{align*}
 Therefore, \eqref{lemma:diff:zeta_k} holds for $\ell = j$. Next, we obtain
 \begin{align}
  \|q_k^j\|_2 \leq \|q_k^{j-1}\|_2+\|q_{k-m_k+j}\|_2 |\zeta_k^{(j)}| = \mathcal{O}(\|x_{k-m_k}-x^*\|_2), 
 \end{align}
 which is due to \eqref{lemma:diff:q_k_j}, \eqref{lemma:p_k},  \eqref{ineq:zeta_k_j}, and $j<m_k\leq m$.  Also, from \eqref{lemma:diff:p_k}, \eqref{lemma:diff:zeta_k}, \eqref{lemma:p_k}, and $j\leq m_k-1$, it follows that
 \begin{align}
  & \|q_{k-m_k+j}\zeta_k^{(j)}-\hat{q}_{k-m_k+j}\hat{\zeta}_k^{(j)}\|_2
  \leq \|(q_{k-m_k+j}-\hat{q}_{k-m_k+j})\zeta_k^{(j)}\|_2    \nonumber \\
      & \qquad + \|(\hat{q}_{k-m_k+j}-q_{k-m_k+j})(\zeta_k^{(j)}-\hat{\zeta}_k^{(j)})\|_2 
      + \|q_{k-m_k+j}(\zeta_k^{(j)}-\hat{\zeta}_k^{(j)})\|_2    \nonumber \\
      & \qquad = \hat{\kappa}\mathcal{O}(\|x_{k-m_k}-x^*\|_2^2),   \label{ineq:diff:q_zeta}
 \end{align}
 which together with \eqref{eq:q_k_type2} further yields that
 \begin{equation*}
  \|q_k^j-\hat{q}_k^j\|_2 \leq \|q_k^{j-1}-\hat{q}_k^{j-1}\|_2
               +\|q_{k-m_k+j}\zeta_k^{(j)}-\hat{q}_{k-m_k+j}\hat{\zeta}_k^{(j)}\|_2 = \hat{\kappa}\mathcal{O}(\|x_{k-m_k}-x^*\|_2^2). 
 \end{equation*}
 Then, \eqref{lemma:diff:q_k_j} holds for $\ell = j$, thus completing the induction. 
 
 Since \eqref{lemma:diff:q_k_j} holds for $j=m_k-1$, and $q_k=q_k^{m_k-1}$, we know
 $\|q_k\|_2=\mathcal{O}(\|x_{k-m_k}-x^*\|_2)$ and $\|q_k-\hat{q}_k\|_2 = \hat{\kappa}\mathcal{O}(\|x_{k-m_k}-x^*\|_2^2)$. 
 If $m_k = 1$, then $p_k = \Delta x_{k-1}$. So $\|p_k\|_2 \leq \|x_k-x^*\|_2 + \|x_{k-1}-x^*\|_2 = \mathcal{O}(\|x_{k-m_k}-x^*\|_2)$ and 
 $\|p_k-\hat{p}_k\|_2 \leq \|x_k-\hat{x}_k\|_2 + \|x_{k-1}-\hat{x}_{k-1}\|_2 = \hat{\kappa}\mathcal{O}(\|x_{k-m_k}-x^*\|_2^2)$. 
 Consider $m_k\geq 2$. Since
 \begin{equation*}
  \|p_k\|_2 = \|\Delta x_{k-1}-P_{k-1}\zeta_k\|_2
   \leq \|x_k-x^*\|_2+\|x_{k-1}-x^*\|_2+ \sum_{j=1}^{m_k-1}\|p_{k-m_k+j} \zeta_k^{(j)}\|_2, 
 \end{equation*}
 it follows that $\|p_k\|_2 = \mathcal{O}(\|x_{k-m_k}-x^*\|_2)$. Also, similar to \eqref{ineq:diff:q_zeta}, we have 
 \begin{align*}
  \|p_{k-m_k+j}\zeta_k^{(j)}-\hat{p}_{k-m_k+j}\hat{\zeta}_k^{(j)}\|_2 = \hat{\kappa}\mathcal{O}(\|x_{k-m_k}-x^*\|_2^2), 
 \end{align*}
 which further yields 
 \begin{equation*}
  \|P_{k-1}\zeta_k - \hat{P}_{k-1}\hat{\zeta}_k\|_2
  \leq \sum_{j=1}^{m_k-1}\|p_{k-m_k+j}\zeta_k^{(j)}-\hat{p}_{k-m_k+j}\hat{\zeta}_k^{(j)}\|_2 = \hat{\kappa}\mathcal{O}(\|x_{k-m_k}-x^*\|_2^2). 
 \end{equation*}
 Then, with
 $
  \|p_k-\hat{p}_k\|_2 \leq \|x_k-\hat{x}_k\|_2+\|x_{k-1}-\hat{x}_{k-1}\|_2+
  		\|P_{k-1}\zeta_k - \hat{P}_{k-1}\hat{\zeta}_k\|_2  		
 $, we obtain $\|p_k-\hat{p}_k\|_2 = \hat{\kappa}\mathcal{O}(\|x_{k-m_k}-x^*\|_2^2)$. Hence, \eqref{lemma:p_k} and \eqref{lemma:diff:p_k} hold. 
 
  Now, we prove \eqref{lemma:diff:Gamma_k}, following a similar way of proving \eqref{lemma:diff:zeta_k}. The concerned auxiliary relation is
  \begin{align}
   \|r_k^j\|_2 = \mathcal{O}(\|x_{k-m_k}-x^*\|_2), \quad \|r_k^j-\hat{r}_k^j\|_2 = \hat{\kappa}\mathcal{O}(\|x_{k-m_k}-x^*\|_2^2),  \label{lemma:diff:r_k_j}
  \end{align}
  for $j=0,\dots,m_k$. We still conduct the proof by induction.
  
  For $j=0$, \eqref{lemma:diff:Gamma_k} holds due to $\Gamma_k^{(0)} = \hat{\Gamma}_k^{(0)} = 0$. Since $r_k^0=r_k, \hat{r}_k^0=\hat{r}_k$, we have
  $\|r_k^0\|_2\leq \eta \|r_{k-m_k}\|_2 \leq \eta L\|x_{k-m_k}-x^*\|_2$, and $\|r_k^0-\hat{r}_k^0\|_2 = \hat{\kappa}\mathcal{O}(\|x_{k-m_k}-x^*\|_2^2)$. 
  
  Suppose that $j\geq 1$, and \eqref{lemma:diff:Gamma_k} and \eqref{lemma:diff:r_k_j} hold for $\ell = 0,\dots,j-1$. Consider the $j$-th step  in \eqref{eq:r_k_type2}. With \eqref{ineq:vq_lower}, we have
  \begin{equation}
   |\Gamma_k^{(j)}|\leq \frac{\|v_{k-m_k+j}\|_2\|r_k^{j-1}\|_2}{\underline{\kappa}\|x_{k-m_k}-x^*\|_2^2} = \frac{\mathcal{O}(\|x_{k-m_k}-x^*\|_2^2)}{\underline{\kappa}\|x_{k-m_k}-x^*\|_2^2} = \mathcal{O}(1).   \label{ineq:Gamma_k_j}
  \end{equation}
  Next, if $v_{k-m_k+j}^\mathrm{T}r_k^{j-1} \neq 0$, then
  \begin{align}
   |\Gamma_k^{(j)}-\hat{\Gamma}_k^{(j)}| = |\Gamma_k^{(j)}|\cdot |a_1(1-b_1)+b_1|
      \leq |\Gamma_k^{(j)}|\cdot (|a_1|+|b_1|+|a_1b_1|),   \label{ineq:diff:Gamma_k}
  \end{align}
  where $a_1:=1-\frac{\hat{v}_{k-m_k+j}^\mathrm{T}\hat{r}_k^{j-1}}{v_{k-m_k+j}^\mathrm{T}r_k^{j-1}}$ and $b_1:=1-\frac{v_{k-m_k+j}^\mathrm{T}q_{k-m_k+j}}{\hat{v}_{k-m_k+j}^\mathrm{T}\hat{q}_{k-m_k+j}}$. With \eqref{lemma:diff:r_k_j}, \eqref{lemma:diff:p_k}, and \eqref{lemma:p_k}, it follows that
  \begin{align}
   &|v_{k-m_k+j}^\mathrm{T}r_k^{j-1}-\hat{v}_{k-m_k+j}^\mathrm{T}\hat{r}_k^{j-1}|
    \leq |v_{k-m_k+j}^\mathrm{T}(r_k^{j-1}-\hat{r}_k^{j-1})|    \nonumber \\
   & ~~ + |(v_{k-m_k+j}-\hat{v}_{k-m_k+j})^\mathrm{T}r_k^{j-1}|
     + |(v_{k-m_k+j}-\hat{v}_{k-m_k+j})^\mathrm{T}(\hat{r}_k^{j-1}-r_k^{j-1})|  \nonumber \\
   & = \hat{\kappa}\mathcal{O}(\|x_{k-m_k}-x^*\|_2^3). \label{ineq:diff:v_r}
  \end{align}
  Then with \eqref{ineq:diff:v_r} and \eqref{ineq:vq_lower}, we obtain
  \begin{align}
   |\Gamma_k^{(j)}|\cdot |a_1| = \left|\frac{v_{k-m_k+j}^\mathrm{T}r_k^{j-1}-\hat{v}_{k-m_k+j}^\mathrm{T}\hat{r}_k^{j-1}}{v_{k-m_k+j}^\mathrm{T}q_{k-m_k+j}}\right| \leq \hat{\kappa}\mathcal{O}(\|x_{k-m_k}-x^*\|_2).  \label{ineq:Gamma_k_a}
  \end{align}
  For the bound of $|b_1|$, note that we have obtained \eqref{ineq:b} and also have already proved \eqref{lemma:p_k} and \eqref{lemma:diff:p_k} for the $k$-th iteration. Thus, $|b_1| = \hat{\kappa}\mathcal{O}(\|x_{k-m_k}-x^*\|_2)$, which together with \eqref{ineq:Gamma_k_a}, \eqref{ineq:Gamma_k_j}, and \eqref{ineq:diff:Gamma_k} yields 
  $|\Gamma_k^{(j)}-\hat{\Gamma}_k^{(j)}| = \hat{\kappa}\mathcal{O}(\|x_{k-m_k}-x^*\|_2)$. 
  On the other side, 
  if $v_{k-m_k+j}^\mathrm{T}r_k^{j-1} = 0$, then $\Gamma_k^{(j)} = 0$. Hence
  \begin{align*}
   |\Gamma_k^{(j)}-\hat{\Gamma}_k^{(j)}|= \left| \frac{\hat{v}_{k-m_k+j}^\mathrm{T}\hat{r}_k^{j-1}}{\hat{v}_{k-m_k+j}^\mathrm{T}\hat{q}_{k-m_k+j}} \right| \leq \frac{\hat{\kappa}\mathcal{O}(\|x_{k-m_k}-x^*\|_2^3)}{\frac{1}{2}\underline{\kappa}\|x_{k-m_k}-x^*\|_2^2} = \hat{\kappa}\mathcal{O}(\|x_{k-m_k}-x^*\|_2). 
  \end{align*}
 Therefore \eqref{lemma:diff:Gamma_k} holds for $\ell = j$. Next, we obtain 
 \begin{align*}
  \|r_k^j\|_2 \leq \|r_k^{j-1}\|_2+ \|q_{k-m_k+j}\|_2 |\Gamma_k^{(j)}| = \mathcal{O}(\|x_{k-m_k}-x^*\|_2) 
 \end{align*}   
  due to \eqref{lemma:diff:r_k_j}, \eqref{lemma:p_k}, \eqref{ineq:Gamma_k_j}, and $j\leq m_k\leq m$. By \eqref{lemma:diff:p_k}, \eqref{lemma:diff:Gamma_k}, and \eqref{lemma:p_k}, we have 
  \begin{align}
   & \|q_{k-m_k+j}\Gamma_k^{(j)}-\hat{q}_{k-m_k+j}\hat{\Gamma}_k^{(j)}\|_2 
       \leq \|(q_{k-m_k+j}-\hat{q}_{k-m_k+j})\Gamma_k^{(j)}\|_2    \nonumber \\
    & \quad  +\|(\hat{q}_{k-m_k+j}-q_{k-m_k+j})(\Gamma_k^{(j)}-\hat{\Gamma}_k^{(j)})\|_2 
       + \|q_{k-m_k+j}(\Gamma_k^{(j)}-\hat{\Gamma}_k^{(j)})\|_2   \nonumber \\
    & \quad = \hat{\kappa}\mathcal{O}(\|x_{k-m_k}-x^*\|_2^2),  \label{ineq:diff:q_Gamma}
  \end{align}
  which yields that
  \begin{equation*}
   \|r_k^j-\hat{r}_k^j\|_2 \leq \|r_k^{j-1}-\hat{r}_k^{j-1}\|_2
   	   + \|q_{k-m_k+j}\Gamma_k^{(j)}-\hat{q}_{k-m_k+j}\hat{\Gamma}_k^{(j)}\|_2
   	  = \hat{\kappa}\mathcal{O}(\|x_{k-m_k}-x^*\|_2^2). 
  \end{equation*}
  Then, \eqref{lemma:diff:r_k_j} holds for $\ell = j$, thus completing the induction. 
  
  Since \eqref{lemma:diff:r_k_j} holds for $j=m_k$, and $\bar{r}_k=r_k^{m_k}$, we obtain $\|\bar{r}_k\|_2 = \mathcal{O}(\|x_{k-m_k}-x^*\|_2)$ and $\|\bar{r}_k-\bar{\hat{r}}_k\|_2 = \hat{\kappa}\mathcal{O}(\|x_{k-m_k}-x^*\|_2^2)$. Moreover, similar to \eqref{ineq:diff:q_Gamma}, we have 
  \begin{equation*}
   \|p_{k-m_k+j}\Gamma_k^{(j)}-\hat{p}_{k-m_k+j}\hat{\Gamma}_k^{(j)}\|_2
     =\hat{\kappa}\mathcal{O}(\|x_{k-m_k}-x^*\|_2^2), 
  \end{equation*}
  which further yields 
 \begin{equation*}
  \|P_k\Gamma_k-\hat{P}_k\hat{\Gamma}_k\|_2 
   \leq \sum_{j=1}^{m_k}\|p_{k-m_k+j}\Gamma_k^{(j)}-\hat{p}_{k-m_k+j}\hat{\Gamma}_k^{(j)}\|_2 = \hat{\kappa}\mathcal{O}(\|x_{k-m_k}-x^*\|_2^2). 
 \end{equation*}
 Then, from $\|\bar{x}_k-\bar{\hat{x}}_k\|_2\leq \|x_k-\hat{x}_k\|_2 + \|P_k\Gamma_k-\hat{P}_k\hat{\Gamma}_k\|_2$, we obtain
 $\|\bar{x}_k-\bar{\hat{x}}_k\|_2 = \hat{\kappa}\mathcal{O}(\|x_{k-m_k}-x^*\|_2^2)$. Hence, \eqref{lemma:diff:bar_x_k} holds. 

 Finally,  since $x_{k+1} = \bar{x}_{k}+\beta_{k}\bar{r}_{k}$, it follows that
 \begin{equation*}
  \|x_{k+1}-\hat{x}_{k+1}\|_2 = \|(\bar{x}_{k}-\bar{\hat{x}}_{k})+\beta_{k}(\bar{r}_{k}-\bar{\hat{r}}_{k})\|_2 = \hat{\kappa}\mathcal{O}(\|x_{k-m_k}-x^*\|_2^2),
 \end{equation*}
 where the second equality is due to \eqref{lemma:diff:bar_x_k} and the fact that $\beta_{k}$ is bounded.  
 
 As a result, we complete the induction. Thus, \eqref{lemma:diff:r_k} and \eqref{lemma:diff:x_k} are proved. 
\end{proof}

\subsection{Proof of Theorem~\ref{them:am}}  \label{subsec:proof:them:am}
Let $\lambda_{\min}(\cdot)$ and $\lambda_{\max}(\cdot)$ denote the smallest eigenvalue and the largest eigenvalue of a real symmetric matrix. 
We first give a lemma.
\begin{lemma}   \label{lemma:theta_k}
 Suppose that $A\in\mathbb{R}^{d\times d}$ is positive definite with $\lambda_{\min}(\mathcal{S}(A))\geq \mu$ and $\|A\|_2\leq L$, where $\mu, L > 0$.  Then for a constant $\theta\in\bigl[\bigl(1-\frac{\mu^2}{L^2}\bigr)^{1/2},1\bigr)$, there exist positive constants $\beta,\beta'$ such that when $\beta_k\in[\beta,\beta']$, the inequality $\|I-\beta_kA\|_2 \leq \theta$ holds. If $\theta = \bigl(1-\frac{\mu^2}{L^2}\bigr)^{1/2}$, then $\|I-\beta_kA\|_2 \leq \theta$ when $\beta_k = \mu/L^2$. 
\end{lemma}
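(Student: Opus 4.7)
The plan is to bound $\|I-\beta_k A\|_2^2$ by a quadratic polynomial in $\beta_k$ using $\lambda_{\min}(\mathcal{S}(A))\ge\mu$ and $\|A\|_2\le L$, then minimize that polynomial and use continuity to get an interval.

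First I would fix a unit vector $x\in\mathbb{R}^d$ and expand
\begin{equation*}
 \|(I-\beta_k A)x\|_2^2 = 1 - 2\beta_k\, x^{\mathrm{T}}\mathcal{S}(A)x + \beta_k^2\, x^{\mathrm{T}}A^{\mathrm{T}}Ax.
\end{equation*}
Since $x^{\mathrm{T}}\mathcal{S}(A)x\ge\mu$ and $x^{\mathrm{T}}A^{\mathrm{T}}Ax=\|Ax\|_2^2\le L^2$, and assuming $\beta_k>0$, this yields the uniform bound $\|(I-\beta_k A)x\|_2^2 \le \psi(\beta_k)$, where $\psi(\beta):=1-2\beta\mu+\beta^2 L^2$. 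Taking the supremum over unit $x$ gives $\|I-\beta_k A\|_2^2 \le \psi(\beta_k)$.

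Next I would minimize $\psi$ over $\beta>0$. The derivative $\psi'(\beta)=-2\mu+2\beta L^2$ vanishes at $\beta^{\star}=\mu/L^2$, giving $\psi(\beta^{\star})=1-\mu^2/L^2$. This immediately proves the second assertion: at $\beta_k=\mu/L^2$ one has $\|I-\beta_k A\|_2\le\sqrt{1-\mu^2/L^2}$.

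For the general assertion, given $\theta\in\bigl[\sqrt{1-\mu^2/L^2},1\bigr)$, I would solve the quadratic inequality $\psi(\beta)\le\theta^2$, i.e.\ $L^2\beta^2 - 2\mu\beta + (1-\theta^2)\le 0$. Its discriminant $4\mu^2 - 4L^2(1-\theta^2)$ is nonnegative precisely because $\theta^2\ge 1-\mu^2/L^2$, so the real roots
\begin{equation*}
 \beta = \frac{\mu-\sqrt{\mu^2-L^2(1-\theta^2)}}{L^2},\qquad \beta' = \frac{\mu+\sqrt{\mu^2-L^2(1-\theta^2)}}{L^2}
\end{equation*}
are well defined; both are strictly positive since $\sqrt{\mu^2-L^2(1-\theta^2)}<\mu$ when $\theta<1$. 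For every $\beta_k\in[\beta,\beta']$ we then get $\|I-\beta_k A\|_2^2\le\psi(\beta_k)\le\theta^2$, completing the proof. There is no real obstacle here; the only thing to watch is verifying positivity of $\beta$ and nonnegativity of the discriminant, both of which follow directly from the hypothesized range of $\theta$.
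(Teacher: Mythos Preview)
Your proof is correct and follows essentially the same approach as the paper: both derive the quadratic bound $\|I-\beta_kA\|_2^2\le 1-2\beta_k\mu+\beta_k^2L^2$ and then solve the resulting quadratic inequality to obtain the identical interval $[\beta,\beta']$. The only cosmetic difference is that you obtain the bound by expanding $\|(I-\beta_kA)x\|_2^2$ on a unit vector, whereas the paper invokes Weyl's inequalities on $(I-\beta_kA)^{\mathrm{T}}(I-\beta_kA)$; your route is slightly more direct but leads to the same place.
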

\begin{proof}
 Since 
 $ (I-\beta_kA)^\mathrm{T}(I-\beta_kA) = I-\beta_k(A+A^\mathrm{T})+\beta_k^2 A^\mathrm{T}A$, 
 it follows from Weyl's inequalities \cite[Theorem~III.2.1]{bhatia2013matrix} that 
 \begin{align}
  \|I-\beta_kA\|_2^2 &\leq \lambda_{\max}\bigl(I-\beta_k(A+A^\mathrm{T})\bigr)+ \lambda_{\max}(\beta_k^2A^\mathrm{T}A)    \nonumber \\
  &\leq 1-\beta_k\lambda_{\min}(A+A^\mathrm{T})+\beta_k^2\|A\|_2^2   \nonumber \\
  &\leq 1-2\beta_k\mu+\beta_k^2L^2. 
 \end{align}
 Thus, 
 to ensure $\|I-\beta_kA\|_2\leq \theta$, it suffices to require that 
 \begin{align}
  1-2\beta_k\mu+\beta_k^2L^2\leq \theta^2.   \label{ineq:theta}
 \end{align}
 Since $\theta\in\bigl[\bigl(1-\frac{\mu^2}{L^2}\bigr)^{1/2},1\bigr)$, solving \eqref{ineq:theta} yields that $\beta_k\in [\beta,\beta']$, where
 \begin{equation}
  \beta = \frac{\mu-\bigl( \mu^2-L^2(1-\theta^2) \bigr)^{1/2}}{L^2}, \quad
  \beta' = \frac{\mu+\bigl( \mu^2-L^2(1-\theta^2) \bigr)^{1/2}}{L^2}.
 \end{equation}
 If $\theta = \bigl(1-\frac{\mu^2}{L^2}\bigr)^{1/2}$, then $\beta_k = \mu/L^2$. 
\end{proof}

 Now, we give the proof of Theorem~\ref{them:am}. 
\begin{proof}
 Let the notations be the same as those in the proof of \Cref{lemma:diff}. 
 
  1. For the Type-I method, let $x_k^\mathrm{A}$ and $r_k^\mathrm{A}$ denote the $m_k$-th iterate and residual of Arnoldi's method applied to solve $\hat{h}(x) = 0$, with the starting point $x_{k-m_k}$. 
  Due to \cref{prop:linear}, we have 
 $ \bar{\hat{x}}_k = x_k^\mathrm{A}$. Then, according to the known convergence of Arnoldi's method \cite[Corollary 2.1 and Proposition 4.1]{saad1981krylov}, 
 \begin{equation}
  \|\bar{\hat{x}}_k-x^*\|_2 =  \|x_k^\mathrm{A}-x^*\|_2 \leq \sqrt{1+\gamma_k^2\kappa_k^2}\min_{\mathop{}_{p(0)=1}^{p\in \mathcal{P}_{m_k}}}\|p(A)(x_{k-m_k}-x^*)\|_2.   \label{them:x_k:arnoldi}
 \end{equation}
 Since $\hat{x}_{k+1} = \bar{\hat{x}}_k+\beta_k\bar{\hat{r}}_k$, it follows that
 $\hat{x}_{k+1}-x^* = (I-\beta_kA)(\bar{\hat{x}}_k-x^*)$. Hence $\|\hat{x}_{k+1}-x^*\|_2 \leq \theta_k \|\bar{\hat{x}}_k-x^*\|_2$, which along with \eqref{them:x_k:arnoldi} and \cref{lemma:diff} yields \eqref{them:type1}. 
 
 Let $\sigma_{\min}(\cdot)$ denote the smallest singular value. Choose  $U_k\in\mathbb{R}^{d\times m_k}$ that satisfies ${\rm range}(U_k) = \mathcal{K}_{m_k}(A,r_{k-m_k})$ and $U_k^\mathrm{T}U_k = I$. Then $\pi_k = U_kU_k^\mathrm{T}$. 
 Since $\pi_k$ and $I-\pi_k$ are orthogonal projectors, it follows that $\gamma_k = \|\pi_kA(I-\pi_k)\|_2 \leq \|A\|_2 \leq L$. 
 For the restriction $A_k\vert_{\mathcal{K}_{m_k}(A,r_{k-m_k})}$, we have
 \begin{align*}
  & \quad \sigma_{\min}(A_k\vert_{\mathcal{K}_{m_k}(A,r_{k-m_k})})  \\
  & = \min_{\mathop{}_{\|y\|_2=1}^{y\in\mathbb{R}^{m_k}}}\|A_kU_ky\|_2
   = \min_{\mathop{}_{\|y\|_2=1}^{y\in\mathbb{R}^{m_k}}}\|U_kU_k^\mathrm{T}AU_ky\|_2 = \sigma_{\min}(U_k^\mathrm{T}AU_k). 
 \end{align*}
  Since $\sigma_{\min}(U_k^\mathrm{T}AU_k) \geq \lambda_{\min}(\mathcal{S}(U_k^\mathrm{T}AU_k)) = \lambda_{\min}(U_k^\mathrm{T}\mathcal{S}(A)U_k)\geq \lambda_{\min}(\mathcal{S}(A))\geq \mu$, where the first inequality is due to Fan-Hoffman theorem \cite[Proposition III.5.1]{bhatia2013matrix}, and the second inequality is due to \cite[Corollary~III.1.5]{bhatia2013matrix}, it follows that
 $\kappa_k = \|(A_k\vert_{\mathcal{K}_{m_k}(A,r_{k-m_k})})^{-1}\|_2 = \frac{1}{\sigma_{\min}(A_k\vert_{\mathcal{K}_{m_k}(A,r_{k-m_k})})}\leq 1/\mu$. 
 
 2. For the Type-II method, let $x_k^\mathrm{G}$ and $r_k^\mathrm{G}$ denote the $m_k$-th iterate and residual of GMRES applied to solve $\hat{h}(x) = 0$, with the starting point $x_{k-m_k}$.  We have $\bar{\hat{x}}_k = x_k^\mathrm{G}$ due to \cref{prop:linear}. It follows from the property of GMRES \cite{saad1986gmres} that
 \begin{equation}
  \|\bar{\hat{r}}_k\|_2 = \|r_k^\mathrm{G}\|_2 = \min_{\mathop{}_{p(0)=1}^{p\in \mathcal{P}_{m_k}}}\|p(A)\hat{r}_{k-m_k}\|_2
	\leq \min_{\mathop{}_{p(0)=1}^{p\in \mathcal{P}_{m_k}}}\|p(A)r_{k-m_k}\|_2 + \hat{\kappa}\mathcal{O}(\|x_{k-m_k}-x^*\|_2^2), \label{them:x_k:gmres}
 \end{equation}
 where the inequality is due to \eqref{lemma:diff:r_k} and $\|p(A)\|_2 \leq 1$ when  $p(A) = (I-\frac{\mu}{L^2}A)^{m_k}$ (see \Cref{lemma:theta_k}). 
 Since $\hat{x}_{k+1} = \bar{\hat{x}}_k+\beta_k\bar{\hat{r}}_k$, it follows that
 $\hat{r}_{k+1} = (I-\beta_kA)\bar{\hat{r}}_k$. Hence $\|\hat{r}_{k+1}\|_2 \leq \theta_k \|\bar{\hat{r}}_k\|_2$, which along with \eqref{them:x_k:gmres},  \cref{lemma:diff}, and $\theta_k \leq 1+\beta' L$ yields \eqref{them:type2}. 
 
  If $\theta_j \leq \theta<1$ (ensured by \Cref{lemma:theta_k}) for $j= 0,\dots,\max\{k-1,0\}$, then 
  \begin{equation}
   \|r_j\|_2 \leq \|r_0\|_2, ~ j=0,\dots,k,   \label{them:type2:r_j}
  \end{equation}
    when $\|x_0-x^*\|_2$ is sufficiently small. We prove it by induction. For $k=0$, \eqref{them:type2:r_j} is clear. Suppose that $k\geq 0$, and as an inductive hypothesis, \eqref{them:type2:r_j} holds for $k$. We establish the result for $k+1$. 
   Since
  \begin{align*}
   \|A(\hat{x}_{k+1}-x^*)\|_2 = \|\hat{r}_{k+1}\|_2 
   		\leq \theta_k \min_{\mathop{}_{p(0)=1}^{p\in \mathcal{P}_{m_k}}}\|p(A)\hat{r}_{k-m_k}\|_2 \leq \theta_k \|A(x_{k-m_k}-x^*)\|_2,
  \end{align*}
  it follows that 
  \begin{align*}
   \|A(x_{k+1}-x^*)\|_2 &\leq \|A(\hat{x}_{k+1}-x^*)\|_2 + \|A(x_{k+1}-\hat{x}_{k+1})\|_2  \\
                    & \leq \theta\|A(x_{k-m_k}-x^*)\|_2+L\|x_{k+1}-\hat{x}_{k+1}\|_2.
  \end{align*}
  From \Cref{lemma:diff}, $\|x_{k+1}-\hat{x}_{k+1}\|_2 = \hat{\kappa}\mathcal{O}(\|x_{k-m_k}-x^*\|_2^2)$. So there exists a constant $c>0$ such that $\|x_{k+1}-\hat{x}_{k+1}\|_2 \leq \hat{\kappa}c \|A(x_{k-m_k}-x^*)\|_2^2 $.  Hence, if $x_{k-m_k}$ is chosen such that $\|A(x_{k-m_k}-x^*)\|_2 \leq \frac{1-\theta}{2L\hat{\kappa}c}$, it yields $ \|A(x_{k+1}-x^*)\|_2 \leq \frac{1+\theta}{2}\|A(x_{k-m_k}-x^*)\|_2$. Thus, $\|x_{k+1}-x^*\|_2\leq \frac{1+\theta}{2}\frac{L}{\mu}\|x_{k-m_k}-x^*\|_2$, which indicates $x_{k+1}\in\mathcal{B}_{\hat{\rho}}(x^*)$ 
  for  $\|x_{k-m_k}-x^*\|_2\leq \frac{2\mu\hat{\rho}}{L(1+\theta)}$. Then
   due to \eqref{assum:jacobian_continue}, we have  
  $\|r_{k+1}\|_2 \leq \|A(x_{k+1}-x^*)\|_2+\frac{1}{2}\hat{\kappa}\|x_{k+1}-x^*\|_2^2$. Therefore, 
  \begin{align*}
   \|r_{k+1}\|_2 &\leq \frac{1+\theta}{2} \|A(x_{k-m_k}-x^*)\|_2 + \frac{1}{2}\hat{\kappa}\|x_{k+1}-x^*\|_2^2    \\
      & \leq \frac{1+\theta}{2}\bigl(\|r_{k-m_k}\|_2+\frac{\hat{\kappa}}{2}\|x_{k-m_k}-x^*\|_2^2\bigr)+\frac{1}{2}\hat{\kappa}\left(\frac{1+\theta}{2}\frac{L}{\mu}\|x_{k-m_k}-x^*\|_2\right)^2 \\
      & \leq \theta'\|r_{k-m_k}\|_2 + \hat{\kappa} c' \|r_{k-m_k}\|_2^2,
  \end{align*}
 where $\theta':=\frac{1+\theta}{2}$, $c'>0$ is a constant, and the last inequality is due to \eqref{ineq:r_k}. So by choosing  $\|x_{k-m_k}-x^*\|_2\leq \frac{1-\theta'}{2\hat{\kappa}c'L}$, it follows that $\|r_{k-m_k}\|_2\leq L\|x_{k-m_k}-x^*\|_2 \leq \frac{1-\theta'}{2\hat{\kappa}c'}$. Then 
  $\|r_{k+1}\|_2 \leq \frac{1+\theta'}{2}\|r_{k-m_k}\|_2 < \|r_{k-m_k}\|_2 \leq \|r_0\|_2$. Since $\|x_{k-m_k}-x^*\|_2 \leq \frac{1}{\mu}\|r_{k-m_k}\|_2 \leq \frac{1}{\mu}\|r_0\|_2 \leq \frac{L}{\mu}\|x_0-x^*\|_2$, the requirement that $\|x_{k-m_k}-x^*\|_2 \leq \rho$  for some constant $\rho>0$ can be induced from $\|x_0-x^*\|_2\leq \frac{\mu\rho}{L}$. Hence, we complete the induction. 
  Then \eqref{them:type2} holds if $\theta_j\leq \theta$ $(j\geq 0)$ and $x_0$ is sufficiently close to $x^*$. 
 
 3. If $m_k = d$, then the Process~II obtains the exact solution of $\hat{h}(x) = 0$, i.e. $\hat{x}_{k+1} = x^*$. Therefore $\|x_{k+1}-x^*\|_2 = \hat{\kappa}\mathcal{O}(\|x_{k-m_k}-x^*\|_2^2)$. 
 \end{proof}

\subsection{Proof of Lemma~\ref{lemma:h}}   \label{subsec:proof:lemma:h}
\begin{proof}
  Since $h'(x) = I-g'(x)$, it follows that for every $x,y\in\mathcal{B}_{\hat{\rho}}(x^*)$, 
  \begin{equation*}
   \|h'(x)-h'(y)\|_2 = \|g'(x)-g'(y)\|_2 \leq \hat{\kappa}\|x-y\|_2,
  \end{equation*}
 which implies that $h(x)$ is Lipschitz continuously differentiable in $\mathcal{B}_{\hat{\rho}}(x^*)$ and the Lipschitz constant of $h'(x)$ is $\hat{\kappa}$. 
 
 Due to $\|I-h'(x)\|_2 = \|g'(x)\|_2 \leq \kappa < 1$, we have
 $\|h'(x)\|_2 \leq \|I\|_2+\|I-h'(x)\|_2 \leq 1+\kappa$, and
 \begin{equation*}
  \frac{1}{\sigma_{\min}(h'(x))}=\|h'(x)^{-1}\|_2 = \|\left(I-g'(x)\right)^{-1}\|_2 \leq \frac{1}{1-\|g'(x)\|_2} \leq \frac{1}{1-\kappa}, 
 \end{equation*}
 where $\sigma_{\min}(\cdot)$ denotes the smallest singular value. Thus, $\sigma_{\min}(h'(x)) \geq 1-\kappa$. The \eqref{assum:jacobian} holds for $\mu=1-\kappa$ and $L=1+\kappa$. 
 Note that 
 \begin{align}
  \left\Vert I-\mathcal{S}(h'(x)) \right\Vert_2
    \leq \frac{1}{2}(\Vert I-h'(x)\Vert_2+\Vert I-h'(x)^\mathrm{T}\Vert_2)
     = \kappa.   \label{ineq:symmetric_part}
 \end{align}
 Let $\lambda$ be an arbitrary eigenvalue of $\mathcal{S}(h'(x))$. Since $\mathcal{S}(h'(x))$ is symmetric, it follows from \eqref{ineq:symmetric_part} that $\vert 1-\lambda \vert \leq \kappa$, which yields $0<1-\kappa\leq \lambda \leq 1+\kappa$. Thus \eqref{assum:hermite_part} also holds for $\mu=1-\kappa$
 and $L=1+\kappa$. 
 
 Therefore,  Assumption~\ref{assum:h} is satisfied. 
 \end{proof}

\section{Proofs of Section~\ref{sec:mix_params}}

 \subsection{Proof of Proposition~\ref{prop:linear:hessenberg}}  \label{proof:prop:linear:hessenberg}
\begin{proof}
  Since $v_j^\mathrm{T}q_j \neq 0$ for $j=k-m_k+1,\dots,k$, the procedures \eqref{eq:q_k_type2} and \eqref{eq:r_k_type2} are well defined. 
  First, by construction, we have
  \begin{align}
   p_{k+1} &= \Delta x_{k} - P_k\zeta_{k+1} = -P_k\Gamma_k +\beta_k\bar{r}_k-P_k\zeta_{k+1} = \beta_k\bar{r}_k-P_k\phi_k  \nonumber \\
   &= \beta_k(r_k-Q_k\Gamma_k) - P_k\phi_k  \nonumber \\
   &= \beta_k(I-\beta_{k-1}A)\bar{r}_{k-1}-\beta_kQ_k\Gamma_k-P_k\phi_k   \nonumber \\
   &= \beta_k(I-\beta_{k-1}A)\frac{p_k+P_{k-1}\phi_{k-1}}{\beta_{k-1}}+\beta_k AP_k\Gamma_k-P_k\phi_k.    \label{eq:p_k_recursive}
  \end{align}
 Here, for brevity, we define $P_k = \mathbf{0}\in\mathbb{R}^d$, $\phi_k = 0$,  $\Gamma_{k+1}^{[0]} = 0$, $\zeta_{k+1} = 0$, if $m_k = 0$. Correspondingly, $\bar{x}_k = x_k$, $\bar{r}_k = r_k$, if $m_k = 0$.  We prove \eqref{eq:AP_k} by induction. 
 
 If $m_k=1$, it follows from \eqref{eq:p_k_recursive} that 
 \begin{align*}
  p_{k+1} = \beta_k(I-\beta_{k-1}A)\frac{p_k}{\beta_{k-1}}+\beta_kAp_k\Gamma_k-p_k\phi_k.
 \end{align*}
 It follows that $Ap_k = \frac{1}{1-\Gamma_k}\left(\frac{1}{\beta_{k-1}}-\frac{1}{\beta_k}\phi_k\right)p_k-\frac{1}{(1-\Gamma_k)\beta_k}p_{k+1}$, namely  \eqref{eq:AP_k}. 
 
 For $m_k\geq 2$, the inductive hypothesis is $AP_{k-1}=P_k\bar{H}_{k-1}$. With  \eqref{eq:p_k_recursive}, we have
 \begin{align}
  p_{k+1} &= \frac{\beta_k}{\beta_{k-1}}(p_k+P_{k-1}\phi_{k-1})-\beta_kA(p_k+P_{k-1}\phi_{k-1}) + \beta_kAP_k\Gamma_k-P_k\phi_k   \nonumber \\  
  & = P_k\left( \frac{\beta_k}{\beta_{k-1}}\begin{pmatrix}
   \phi_{k-1}  \\
   1
  \end{pmatrix} -\beta_k\bar{H}_{k-1}\bigl(\phi_{k-1}-\Gamma_k^{[m_k-1]}\bigr) -\phi_k \right)   \nonumber  \\
  & \quad - \beta_kAp_k\bigl(1-\Gamma_k^{(m_k)}\bigr).   \nonumber 
 \end{align}
 Hence, by rearrangement, we obtain \eqref{eq:AP_k}, thus completing the induction. 
 
 Suppose that $m_k\geq 1$. We prove $1-\Gamma_k^{(m_k)}\neq 0$ by contradiction. If $\Gamma_k^{(m_k)} = 1$, then $\bar{r}_k = r_k-Q_k\Gamma_k = r_k-Q_{k-1}\Gamma_k^{[m_k-1]}-q_k = r_k-Q_{k-1}\Gamma_k^{[m_k-1]}-(\Delta r_{k-1}-Q_{k-1}\zeta_k) = r_{k-1}-Q_{k-1}\bigl(\Gamma_k^{[m_k-1]}-\zeta_k\bigr) \perp {\rm range}(V_k)$. Hence $\bar{r}_k = \bar{r}_{k-1}$ due to $\bar{r}_k \perp {\rm range}(V_{k-1})$. 
  For the Type-I method, $v_k = p_k = \beta_{k-1}\bar{r}_{k-1}-P_{k-1}\phi_{k-1}$, so $0 = \bar{r}_{k-1}^\mathrm{T}p_k = \beta_{k-1}\bar{r}_{k-1}^\mathrm{T}\bar{r}_{k-1}$, which indicates $\bar{r}_{k-1} = 0$. For the Type-II method, $v_k = q_k = -Ap_k = -\beta_{k-1}A\bar{r}_{k-1}-Q_{k-1}\phi_{k-1}$, so $0 = \bar{r}_{k-1}^\mathrm{T}q_k = -\beta_{k-1}\bar{r}_{k-1}^\mathrm{T}A\bar{r}_{k-1}$, which indicates $\bar{r}_{k-1} = 0$ since $A$ is positive definite. However, $\bar{r}_{k-1} = 0$ yields that $r_k = (I-\beta_{k-1}A)\bar{r}_{k-1} = 0$, which is impossible because the algorithm has not found the exact solution. As a result, $1-\Gamma_k^{(m_k)}\neq 0$. Thus \eqref{eq:H_k} and \eqref{eq:h_k_2} are well defined. 
 \end{proof}

\subsection{Proof of Lemma~\ref{lemma:diff:Hessenberg}}  \label{subsec:proof:lemma:diff:Hessenberg}
\begin{proof}
 From \eqref{lemma:diff:Gamma_k} in the proof of \cref{lemma:diff} and the assumption $|1-\Gamma_k^{(m_k)}| \geq \tau_0$, with sufficiently small $\|x_{0}-x^*\|_2$, we can ensure 
 $|\Gamma_k^{(m_k)}-\hat{\Gamma}_k^{(m_k)}| = \hat{\kappa}\mathcal{O}(\|x_{k-m_k}-x^*\|_2)$ and $|1-\hat{\Gamma}_k^{(m_k)}|\geq \frac{1}{2}\tau_0$. Thus 
 \begin{equation}
  \left\vert\frac{1}{1-\Gamma_k^{(m_k)}} - \frac{1}{1-\hat{\Gamma}_k^{(m_k)}}\right\vert
  =\frac{\vert\Gamma_k^{(m_k)}-\hat{\Gamma}_k^{(m_k)}\vert}{\vert(1-\Gamma_k^{(m_k)})(1-\hat{\Gamma}_k^{(m_k)})\vert} = \hat{\kappa}\mathcal{O}(\|x_{k-m_k}-x^*\|_2).  \label{lemma:diff:inv_Gamma}
 \end{equation}
 
 We prove \eqref{lemma:H_k} 
 by induction. The same as $h_k, h_k^{(m_k+1)}, H_k, \bar{H}_k, \phi_k$ in Process~I, the notations $\hat{h}_k, \hat{h}_k^{(m_k+1)}, \hat{H}_k, \bar{\hat{H}}_k, \hat{\phi}_k$ are defined for Process~II, correspondingly. 
 
 If $m_k = 1$, then 
  \begin{align}
   \left\vert h_k - \hat{h}_k\right\vert &= \left\vert\frac{1}{1-\Gamma_k}\left(\frac{1}{\beta_{k-1}}-\frac{1}{\beta_k}\phi_k\right)-\frac{1}{1-\hat{\Gamma}_k}\left( \frac{1}{\beta_{k-1}}-\frac{1}{\beta_k}\hat{\phi}_k \right)\right\vert   \nonumber \\
   & \leq \left\vert \frac{1}{1-\Gamma_k}\cdot \frac{\hat{\phi}_k-\phi_k}{\beta_k} \right\vert + \left\vert \left( \frac{1}{1-\Gamma_k}-\frac{1}{1-\hat{\Gamma}_k}\right)\cdot \left( \frac{1}{\beta_{k-1}}-\frac{1}{\beta_k}\hat{\phi}_k\right) \right\vert   \nonumber \\
   & = \hat{\kappa}\mathcal{O}(\|x_{k-m_k}-x^*\|_2),
  \end{align}
  because of \eqref{lemma:diff:inv_Gamma}, and \eqref{lemma:diff:zeta_k}, \eqref{lemma:diff:Gamma_k} in the proof of \cref{lemma:diff}. Also, $\|H_k\|_2 = \vert h_k\vert = \mathcal{O}(1)$. 
  
  Suppose that $m_k\geq 2$, and as an inductive hypothesis, $\|H_{k-1}-\hat{H}_{k-1}\|_2 = \hat{\kappa}\mathcal{O}(\|x_{k-m_k}-x^*\|_2)$, $\|H_{k-1}\|_2 = \mathcal{O}(1)$. First, due to \eqref{lemma:diff:inv_Gamma}, we have
  \begin{equation*}
   \left\vert h_{k-1}^{(m_k)}- \hat{h}_{k-1}^{(m_k)}\right\vert 
   = \frac{1}{\beta_{k-1}}\left\vert \frac{1}{1-\Gamma_{k-1}^{(m_k-1)}} - \frac{1}{1-\hat{\Gamma}_{k-1}^{(m_k-1)}} \right\vert = \hat{\kappa}\mathcal{O}(\|x_{k-m_k}-x^*\|_2).  
  \end{equation*}
  Also, $\vert h_{k-1}^{(m_k)}\vert \leq \frac{1}{\beta\tau_0}$, and $m_k\leq m$. 
 Thus for $\bar{H}_{k-1}$ and $\bar{\hat{H}}_{k-1}$, we have that 
 \begin{equation} 
 \|\bar{H}_{k-1}-\bar{\hat{H}}_{k-1}\|_2 = \hat{\kappa}\mathcal{O}(\|x_{k-m_k}-x^*\|_2), \quad \|\bar{H}_{k-1}\|_2 = \mathcal{O}(1).  \label{lemma:diff:H_k_bar}
 \end{equation}
 As a result, 
 \begin{align}
  &\quad \left\Vert\bar{H}_{k-1}(\phi_{k-1}-\Gamma_k^{[m_k-1]})-\bar{\hat{H}}_{k-1}(\hat{\phi}_{k-1}-\hat{\Gamma}_k^{[m_k-1]})\right\Vert_2  \nonumber \\
  & \leq \left\Vert \bar{H}_{k-1}\left(\phi_{k-1}-\Gamma_k^{[m_k-1]}-
  (\hat{\phi}_{k-1}-\hat{\Gamma}_k^{[m_k-1]})\right) \right\Vert_2   \nonumber \\ 
  & \quad + \left\Vert \left( \bar{H}_{k-1}-\bar{\hat{H}}_{k-1}\right)\left( \hat{\phi}_{k-1}-\hat{\Gamma}_k^{[m_k-1]}\right) \right\Vert_2   \nonumber \\
  & \leq \hat{\kappa}\mathcal{O}(\|x_{k-m_k}-x^*\|_2),   \nonumber 
 \end{align}
 and $\|\bar{H}_{k-1}(\phi_{k-1}-\Gamma_k^{[m_k-1]})\|_2 = \mathcal{O}(1)$. 
 Besides, 
 \begin{align}
  & \left\Vert\frac{1}{\beta_{k-1}}\begin{pmatrix}
   \phi_{k-1} \\
   1
  \end{pmatrix}-\frac{1}{\beta_k}\phi_k  
  - \left( 
   \frac{1}{\beta_{k-1}}\begin{pmatrix}
   \hat{\phi}_{k-1} \\
   1
  \end{pmatrix}-\frac{1}{\beta_k}\hat{\phi}_k    
  \right)
  \right\Vert_2 = \hat{\kappa}\mathcal{O}(\|x_{k-m_k}-x^*\|_2),   \nonumber  \\
  & \left\Vert\frac{1}{\beta_{k-1}}\begin{pmatrix}
   \phi_{k-1} \\
   1
  \end{pmatrix}-\frac{1}{\beta_k}\phi_k \right\Vert_2 = \mathcal{O}(1).   \nonumber 
 \end{align}
 Therefore, $\Vert(1-\Gamma_k^{(m_k)})h_k -(1-\hat{\Gamma}_k^{(m_k)})\hat{h}_k\Vert_2 = \hat{\kappa}\mathcal{O}(\|x_{k-m_k}-x^*\|_2)$, $\|(1-\Gamma_k^{(m_k)})h_k\|_2 = \mathcal{O}(1)$. Hence,
 \begin{align*}
  &\|h_k-\hat{h}_k\|_2 \leq \left\Vert \frac{1}{1-\Gamma_k^{(m_k)}}\left( (1-\Gamma_k^{(m_k)})h_k -(1-\hat{\Gamma}_k^{(m_k)})\hat{h}_k \right) \right\Vert_2  \nonumber \\
  & \qquad + \left\Vert\left( \frac{1}{1-\Gamma_k^{(m_k)}}-\frac{1}{1-\hat{\Gamma}_k^{(m_k)}} \right)\cdot \left( 1-\hat{\Gamma}_k^{(m_k)}\right)\hat{h}_k \right\Vert_2 = \hat{\kappa}\mathcal{O}(\|x_{k-m_k}-x^*\|_2), 
 \end{align*}
 and $\|h_k\|_2 = \mathcal{O}(1)$, which together with \eqref{lemma:diff:H_k_bar} and $m_k\leq m$  implies that \eqref{lemma:H_k} holds. Thus we complete the induction. 
 \end{proof}

\section{Proofs of Section~\ref{sec:short-term}}

 \subsection{Proof of Theorem~\ref{them:short_term_convergence}}  \label{proof:them:short_term_convergence}
\begin{proof}
 Consider the two processes defined in \cref{definition:two_processes}. Here, we replace the restarted AM method by the restarted ST-AM method.
  Note that the restarted ST-AM is obtained from the restarted AM by setting $\zeta_k^{(j)} = 0$ for $j\leq k-3$, and 
   $\Gamma_k^{(j)} = 0$ for $ j \leq k-2$. Similar to \cref{lemma:diff}, it can be proved that 
  \begin{equation}
  \|r_k - \hat{r}_k\|_2 = \hat{\kappa}\cdot\mathcal{O}(\|x_{k-m_k}-x^*\|_2^2),   ~~
   \| x_{k+1} - \hat{x}_{k+1}\|_2 = \hat{\kappa}\cdot\mathcal{O}(\|x_{k-m_k}-x^*\|_2^2), 
   \label{them:diff:x_k}
  \end{equation}
 provided that there exists a constant $\eta_0 > 0$ such that 
 \begin{equation}
  \|r_j\|_2 \leq \eta_0 \|r_0\|_2,  ~ j=0,\dots,k,   \label{them:short-term:r_j}
 \end{equation}
  and $x_0\in \mathcal{B}_{\hat{\rho}}(x^*)$ is sufficiently close to $x^*$. 
 
 Since $\theta_k = \|I-\beta_kA\|_2 \leq \theta$, there are positive constants $\beta, \beta'$ such that $\beta\leq \beta_k\leq \beta'$. In fact, 
 by choosing $\beta_k \in \bigl[\frac{1-\theta}{\mu},\frac{1+\theta}{L}\bigr]$, we can ensure $\theta_k \leq \max\{|1-\beta_kL|,|1-\beta_k\mu| \}\leq \theta<1$. 
 We give the proof of the Type-I method here.  
 
 For the restarted Type-I ST-AM method, if $x_0$ is sufficiently close to $x^*$, then
 \begin{equation}
  \|x_j-x^*\|_A \leq \|x_0-x^*\|_A, ~ j=0,\dots,k.   \label{them:short-term:x_A}
 \end{equation}
 We prove \eqref{them:short-term:r_j} and \eqref{them:short-term:x_A} hold for the Type-I method by induction. 
 For $k=0$, \eqref{them:short-term:r_j} and \eqref{them:short-term:x_A} hold. Suppose that for $k\geq 0$, the results hold  for $k$. We establish the results for $k+1$. Let $x_k^\mathrm{A}$ and $r_k^\mathrm{A}$ denote the $m_k$-th iterate and residual of Arnoldi's method applied to solve $\hat{h}(x)=0$, with the starting point $x_{k-m_k}$. Due to \cref{prop:spd_linear} and \cref{prop:linear},  we have  $\bar{\hat{x}}_k = x_k^\mathrm{A}$. 
 Hence 
 \begin{align}
 \|\hat{x}_{k+1} - x^*\|_A &\leq \theta_{k}\|\bar{\hat{x}}_{k}-x^*\|_A
  =  \theta_{k}\|x_{k}^\mathrm{A}-x^*\|_A    \nonumber   \\
  & = \theta_{k}\min_{\mathop{}_{p(0)=1}^{p\in \mathcal{P}_{m_{k}}}}\|p(A)(x_{k-m_{k}}-x^*)\|_A \leq \theta_{k}\|x_{k-m_{k}}-x^*\|_A.		\label{them:hat_x_k_type1}
 \end{align}
 Here, we use the fact that $\|I-\beta_kA\|_A = \|I-\beta_kA\|_2$. 
  From \eqref{them:diff:x_k}, it follows that $\|x_{k+1}-\hat{x}_{k+1}\|_A = \hat{\kappa}\mathcal{O}(\|x_{k-m_k}-x^*\|_A^2)$. 
  Then, there is a constant $c_1>0$ such that $ \|x_{k+1}-\hat{x}_{k+1}\|_A \leq \hat{\kappa}c_1\|x_{k-m_k}-x^*\|_A^2$.  With \eqref{them:hat_x_k_type1}, we have  
 \begin{align}
  \|x_{k+1}-x^*\|_A \leq \theta \|x_{k-m_k}-x^*\|_A+ \hat{\kappa}c_1\|x_{k-m_k}-x^*\|_A^2.
 \end{align}
 Then $\|x_{k+1}-x^*\|_A \leq \frac{1+\theta}{2}\|x_{k-m_k}-x^*\|_A$ provided $\|x_{k-m_k}-x^*\|_A\leq \frac{1-\theta}{2\hat{\kappa}c_1}$, which can be satisfied  by choosing $\|x_0-x^*\|_2 \leq \frac{1-\theta}{2\sqrt{L}\hat{\kappa}c_1}$, since by the inductive hypothesis, $\|x_{k-m_k}-x^*\|_A \leq \|x_0-x^*\|_A \leq \sqrt{L}\|x_0-x^*\|_2$. 
 Thus, $\|x_{k+1}-x^*\|_A < \|x_{k-m_k}-x^*\|_A\leq \|x_0-x^*\|_A$, namely \eqref{them:short-term:x_A} for $k+1$. Also, $\|x_{k+1}-x^*\|_2 \leq \frac{1}{\sqrt{\mu}}\|x_{k+1}-x^*\|_A \leq \frac{1}{\sqrt{\mu}}\|x_0-x^*\|_A \leq \frac{\sqrt{L}}{\sqrt{\mu}}\|x_0-x^*\|_2$. So we can impose $\|x_0-x^*\|_2\leq \frac{\sqrt{\mu}\hat{\rho}}{\sqrt{L}}$ to ensure $x_{k+1}\in\mathcal{B}_{\hat{\rho}}(x^*)$, which further yields that
 $ \|r_{k+1}\|_2 \leq L\|x_{k+1}-x^*\|_2\leq \frac{L\sqrt{L}}{\sqrt{\mu}}\|x_0-x^*\|_2\leq \frac{L\sqrt{L}}{\mu\sqrt{\mu}}\|r_0\|_2$, 
 namely \eqref{them:short-term:r_j} for $k+1$, and $\eta_0 = \frac{L\sqrt{L}}{\mu\sqrt{\mu}}$. Hence, we complete the induction. 
 
 Since $A$ is SPD, we can use the Chebyshev polynomial to obtain
  \begin{align}
  \min_{\mathop{}_{p(0)=1}^{p\in \mathcal{P}_{m_{k}}}}\|p(A)\|_2
   \leq \min_{\mathop{}_{p(0)=1}^{p\in \mathcal{P}_{m_{k}}}}\max_{\lambda \in [\mu,L]}|p(\lambda)|   
   \leq 2\left(\frac{\sqrt{L/\mu}-1}{\sqrt{L/\mu}+1}\right)^{m_k},    \label{them:chebyshev}
 \end{align}
 which is a classical result \cite[Section~6.11.3]{saad2003iterative}. Note that $\|p(A)(x_{k-m_k}-x^*)\|_A \leq \|p(A)\|_A\|x_{k-m_k}-x^*\|_A = \|p(A)\|_2\|x_{k-m_k}-x^*\|_A$. 
 Thus, by choosing $x_0$ sufficiently close to $x^*$,  \eqref{them:short_term_type1} holds as a result of \eqref{them:hat_x_k_type1}, \eqref{them:chebyshev}, and \eqref{them:diff:x_k}. 
 
 For the Type-II method, since $\theta_k\leq \theta < 1$, the bound \eqref{them:type2} can be established following the similar approach to proving  \Cref{them:am}. With \eqref{them:chebyshev}, the bound  \eqref{them:short_term_type2} holds. 
 \end{proof}

\subsection{Proof of Theorem~\ref{them:short-term:eigen_estimate}}  \label{proof:them:short-term:eigen_estimate}
\begin{proof}
 The same as $t_k^{(m_k+1)}, T_k $ in Process~I, the notations $\hat{t}_k^{(m_k+1)}, \hat{T}_k$ are defined for Process~II, correspondingly.
  In this case, the tridiagonal matrix $\hat{T}_k$ can be diagonalized. Let $A:=h'(x^*)$. Then 
  $A\hat{Q}_k = \hat{Q}_k\hat{T}_k+\hat{t}_k^{(m_k+1)}\hat{q}_{k+1}e_{m_k}^\mathrm{T}$.  Hence
  \begin{align*}
   \hat{V}_k^\mathrm{T}A\hat{Q}_k = \hat{V}_k^\mathrm{T}\hat{Q}_k\hat{T}_k,
  \end{align*}
 due to $\hat{V}_k^\mathrm{T}\hat{q}_{k+1} = 0$. Thus $\hat{T}_k = (\hat{V}_k^\mathrm{T}\hat{Q}_k)^{-1}\hat{V}_k^\mathrm{T}A\hat{Q}_k$. Here,  $\hat{V}_k^\mathrm{T}\hat{Q}_k$ and $\hat{V}_k^\mathrm{T}A\hat{Q}_k$ are  symmetric for both types of ST-AM methods. Define $\hat{W}_k=-\hat{V}_k^\mathrm{T}\hat{Q}_k$ for the Type-I method, and $\hat{W}_k = \hat{V}_k^\mathrm{T}\hat{Q}_k$ for the Type-II method. 
 Then
 \begin{equation}
  \hat{W}_k^{1/2}\hat{T}_k\hat{W}_k^{-1/2}  
   = \mp\hat{W}_k^{-1/2}(\hat{V}_k^\mathrm{T}A\hat{Q}_k)\hat{W}_k^{-1/2},  \label{eq:T_k_similar}
 \end{equation}
 where the sign is ``$-$'' for the Type-I method, and ``$+$'' for the Type-II method. 
 The right side in \eqref{eq:T_k_similar} is symmetric, so there exists an  orthonormal matrix $\hat{U}_k\in \mathbb{R}^{m_k\times m_k}$ such that 
 \begin{align}
  \hat{T}_k = \hat{W}_k^{-1/2}\hat{U}_k^\mathrm{T}\hat{D}_k\hat{U}_k\hat{W}_k^{1/2},
 \end{align}
 where $\hat{D}_k$ is a diagonal matrix formed by the eigenvalues of $\hat{T}_k$. Also, 
 similar to the proof of \Cref{lemma:diff}, 
 the relations \eqref{ineq:vq_lower_2}, \eqref{lemma:p_k}, and \eqref{lemma:diff:p_k} also hold for the ST-AM methods.  Note that $\hat{V}_k^\mathrm{T}\hat{Q}_k$ is diagonal. We have 
 \begin{align*}
  \|\hat{V}_k^\mathrm{T}\hat{Q}_k\|_2\|(\hat{V}_k^\mathrm{T}\hat{Q}_k)^{-1}\|_2 
   = \frac{\max_{k-m_k+1\leq i\leq k}\lbrace\vert \hat{v}_i^\mathrm{T}\hat{q}_i \vert\rbrace}{\min_{k-m_k+1\leq j\leq k}\lbrace\vert \hat{v}_j^\mathrm{T}\hat{q}_j \vert\rbrace} = \mathcal{O}(1). 
 \end{align*}
 Thus $\|\hat{W}_k^{1/2}\|_2 \|\hat{W}_k^{-1/2}\|_2 = \mathcal{O}(1)$. Also, 
  similar to \cref{lemma:diff:Hessenberg}, we have $\|T_k-\hat{T}_k\|_2 = \hat{\kappa}\mathcal{O}(\|x_{k-m_k}-x^*\|_2)$. 
 Hence, the result \eqref{them:diff:short_term_lambda} follows from Bauer-Fike theorem. 
 \end{proof}

\section{Additional experimental results}
\label{appendix:additional_expr}
  
 \subsection{Solving linear systems} 
  To verify the theoretical properties of the AM and ST-AM methods for solving linear systems,  we considered 
  solving 
 \begin{align}   \label{eq:linear_system}
  Ax = b,
 \end{align}
 where $A\in\mathbb{R}^{d\times d}, b\in\mathbb{R}^d$, the residual is defined as  $r_k = b-Ax_k$ at $x_k$. 
   The fixed-point iteration is the Richardson's iteration $x_{k+1} = x_k+\beta r_k$, where $\beta$ was chosen to ensure linear convergence.  For the restarted AM and restarted ST-AM,  the restarting conditions were disabled since \eqref{eq:linear_system} is linear.    
   AM-I and AM-II used \eqref{eq:beta_k} with $\beta_0 = 1$ to choose $\beta_k$; ST-AM-I and ST-AM-II used \eqref{eq:symmetric:beta_k} with $\beta_0 = 1$  to choose $\beta_k$.

 \subsubsection{Nonsymmetric linear system} The matrix $A\in\mathbb{R}^{100\times 100}$ was randomly generated from Gaussian distribution and was further modified by making all the eigenvalues have positive real parts. 
 
 The results are shown in \Cref{fig:linear:rk_bar} and \Cref{fig:linear:eigs}. 
  The convergence behaviours of $\|\bar{r}_k\|_2/\|r_0\|_2$ and $\|r_k\|_2/\|r_0\|_2$  verify \cref{prop:modified_seqs}, \cref{prop:linear} and \cref{them:am}.  
  The eigenvalue estimates well approximate the exact eigenvalues of $A$, which justifies the adaptive mixing strategy.
 
 \begin{figure}[ht]
\centering 
\subfigure{
\includegraphics[width=0.44\textwidth]{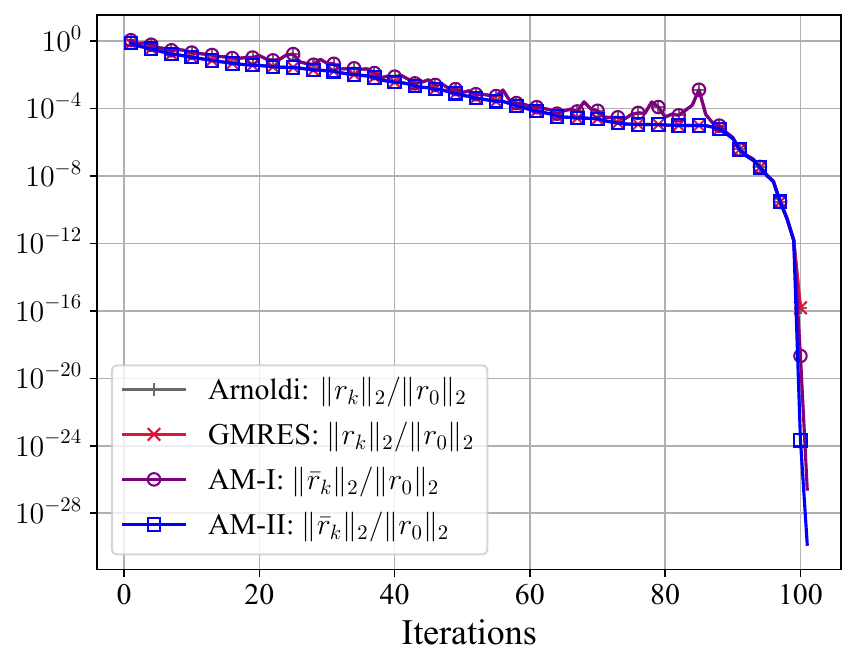}
}
\subfigure{
\includegraphics[width=0.46\textwidth]{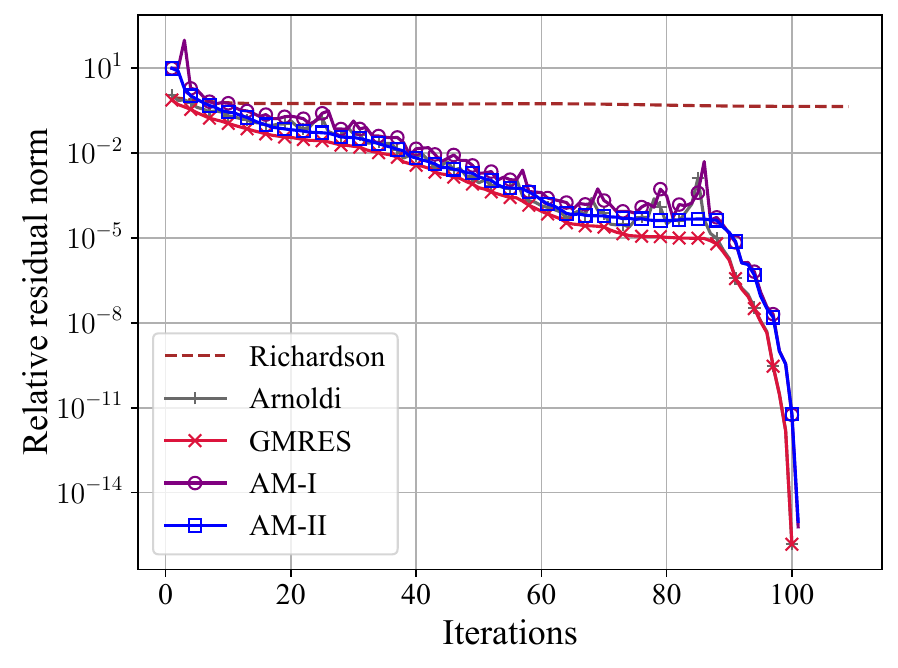}
}
\caption{ Results of solving the nonsymmetric linear system. Left:  
$ \|r_k\|_2/\|r_0\|_2$ of Arnoldi's method and GMRES, and  $ \|\bar{r}_k\|_2/\|r_0\|_2 $ of AM-I and AM-II.  Right: $ \|r_k\|_2/\|r_0\|_2$ of each method.}
\label{fig:linear:rk_bar}
\end{figure}

\begin{figure}[ht]
\centering 
\subfigure{
\includegraphics[width=0.46\textwidth]{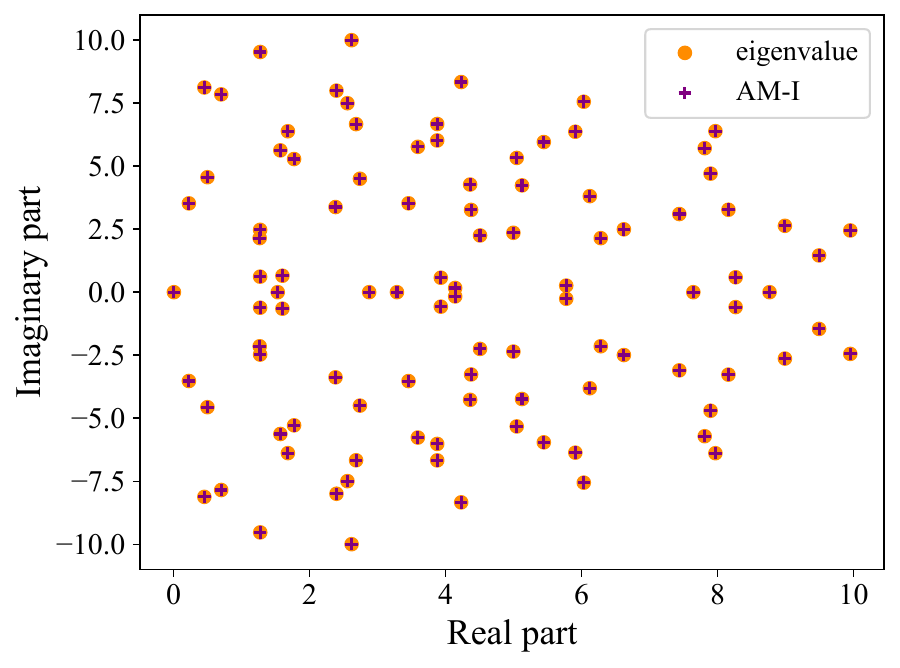}
}
\subfigure{
\includegraphics[width=0.46\textwidth]{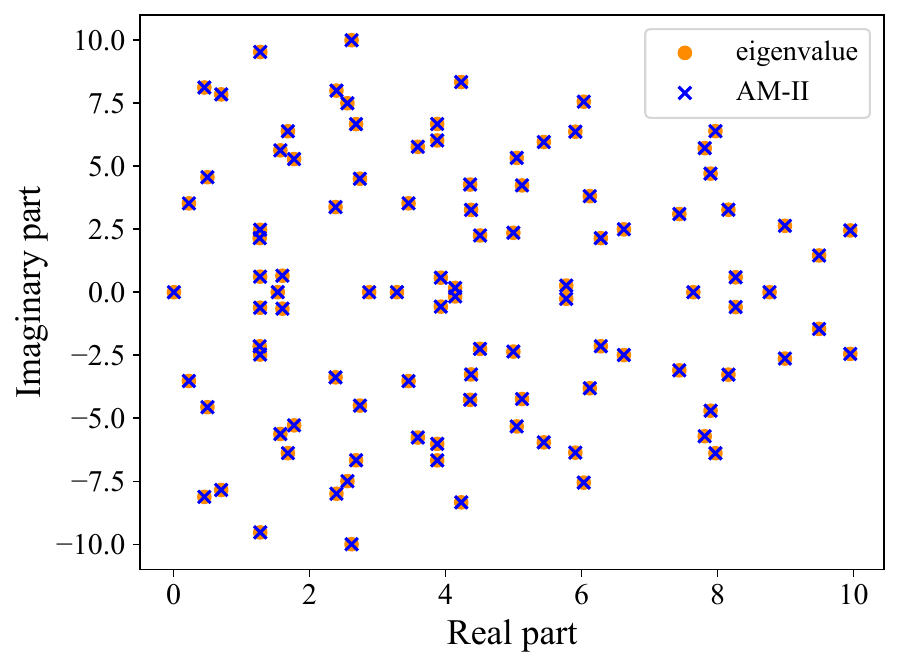}
}
\caption{ Results of solving the nonsymmetric linear system.  The eigenvalues of $A$; the eigenvalue estimates from AM-I and AM-II at the last iteration.}
\label{fig:linear:eigs}
\end{figure}  
  


 \subsubsection{SPD linear system}  We first generated a matrix $B\in\mathbb{R}^{100\times 100}$ from Gaussian distribution, then chose $A=B^\mathrm{T}B$.  
 In this case,  the conjugate gradient (CG) method \cite{hestenes1952methods} and the conjugate residual (CR) method \cite[Algorithm~6.20]{saad2003iterative}, which have short-term recurrences, are equivalent to Arnoldi's method and GMRES, respectively. 
  
  The results in \Cref{fig:spd_linear:rk_bar} verify the properties of the ST-AM methods.  We see the intermediate residuals $\lbrace \bar{r}_k\rbrace$ of ST-AM-I/ST-AM-II match the residuals  $\lbrace r_k\rbrace$ of the CG/CR method during the first 30 iterations. However, the equivalence cannot exactly hold in the later iterations due to the loss of global orthogonality in finite arithmetic.  Nonetheless, the convergence of ST-AM-I/ST-AM-II is comparable to that of CG/CR. 
  \Cref{fig:spd_linear:eigs} shows that the eigenvalue estimates from ST-AM well approximate the exact eigenvalues of $A$. 
 
 \begin{figure}[ht]
\centering 
\subfigure{
\includegraphics[width=0.44\textwidth]{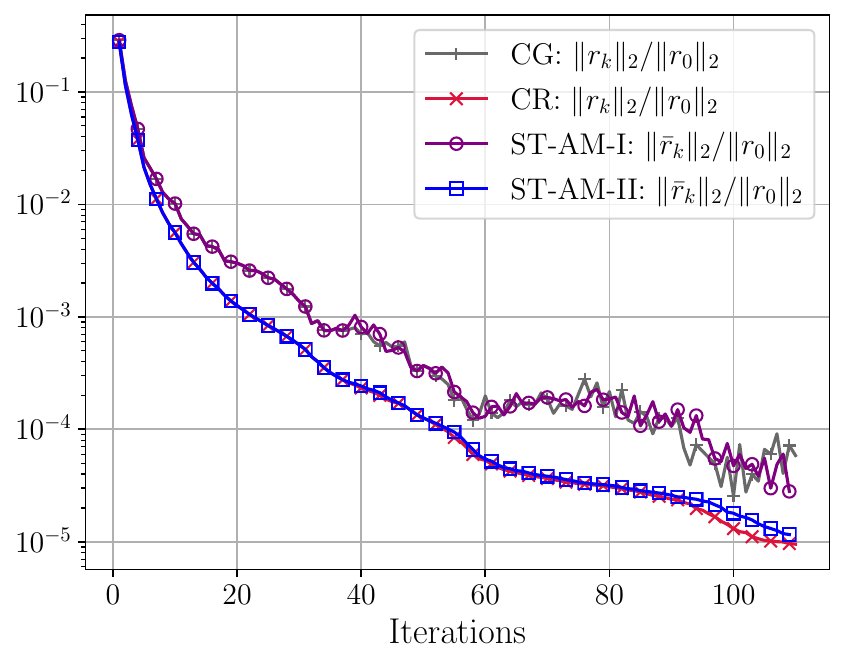}
}
\subfigure{
\includegraphics[width=0.46\textwidth]{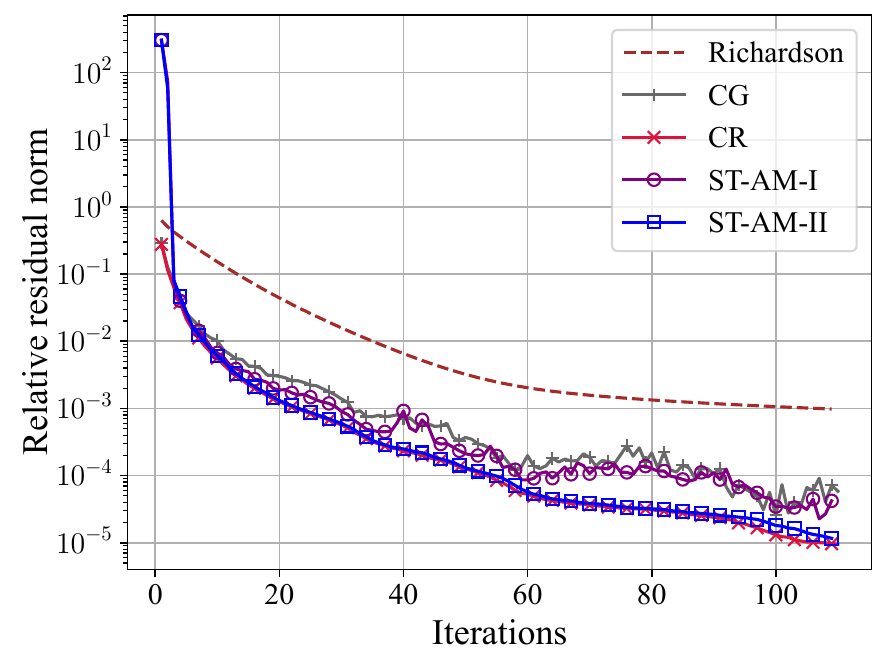}
}
\caption{Results of solving the SPD linear system. Left: $ \|r_k\|_2/\|r_0\|_2$ of CG and CR, and $ \|\bar{r}_k\|_2/\|r_0\|_2 $ of ST-AM-I and ST-AM-II.  Right: $ \|r_k\|_2/\|r_0\|_2$ of each method. }
\label{fig:spd_linear:rk_bar}
\end{figure}  

\begin{figure}[ht]
\centering 
\subfigure{
\includegraphics[width=0.46\textwidth]{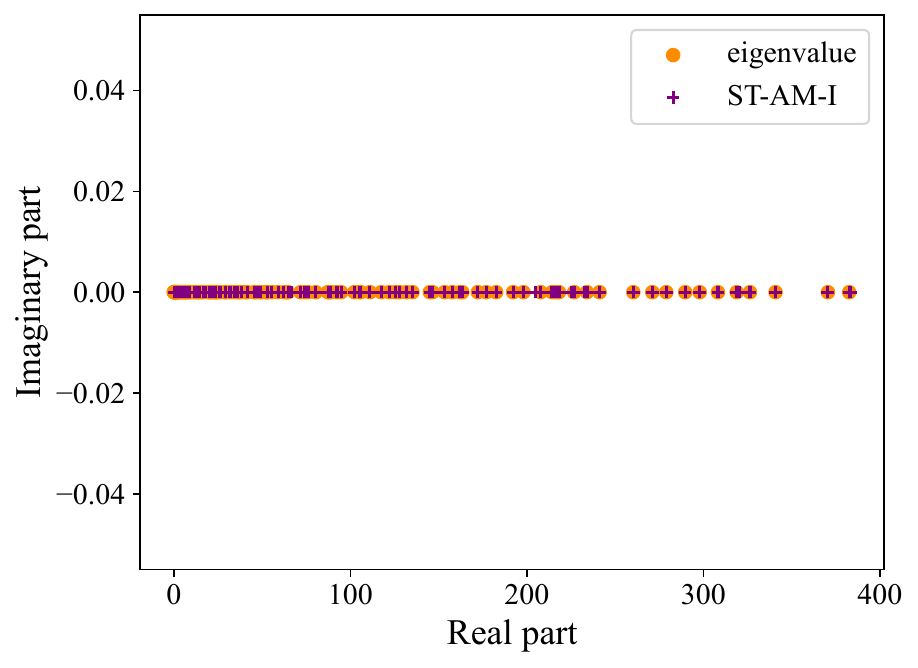}
}
\subfigure{
\includegraphics[width=0.46\textwidth]{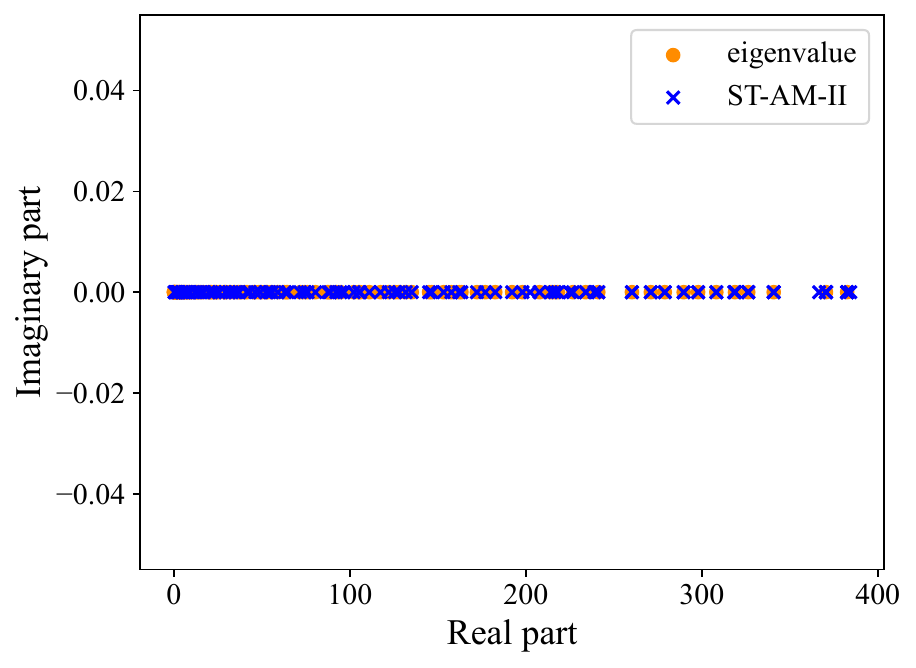}
}
\caption{Results of solving the SPD linear system.  The eigenvalues of $A$;  the eigenvalue estimates from ST-AM-I and ST-AM-II at the last iteration. }
\label{fig:spd_linear:eigs}
\end{figure}

 
	

\subsection{Additional results of solving the modified Bratu problems}
  We provide details about the eigenvalue estimates and show the effect of $\beta_k$ on the convergence. 
 
 \subsubsection{Nonsymmetric Jacobian}
  To verify \Cref{them:eigen_estimate}, 
  we compared the eigenvalue estimates with the Ritz values of $F'(U^*)$ where $ F(U^*) = 0$. The Ritz values were obtained from the $k$-step Arnoldi's method  \cite{saad1980variations} (denoted by Arnoldi($k$)). 
  \Cref{fig:bratu:nonsymmetric_eig} indicates that the extreme Ritz values are well approximated, which accounts for the proper choices of $\beta_k$. 
  
  \begin{figure}[ht]
\centering 
\subfigure{
\includegraphics[width=0.46\textwidth]{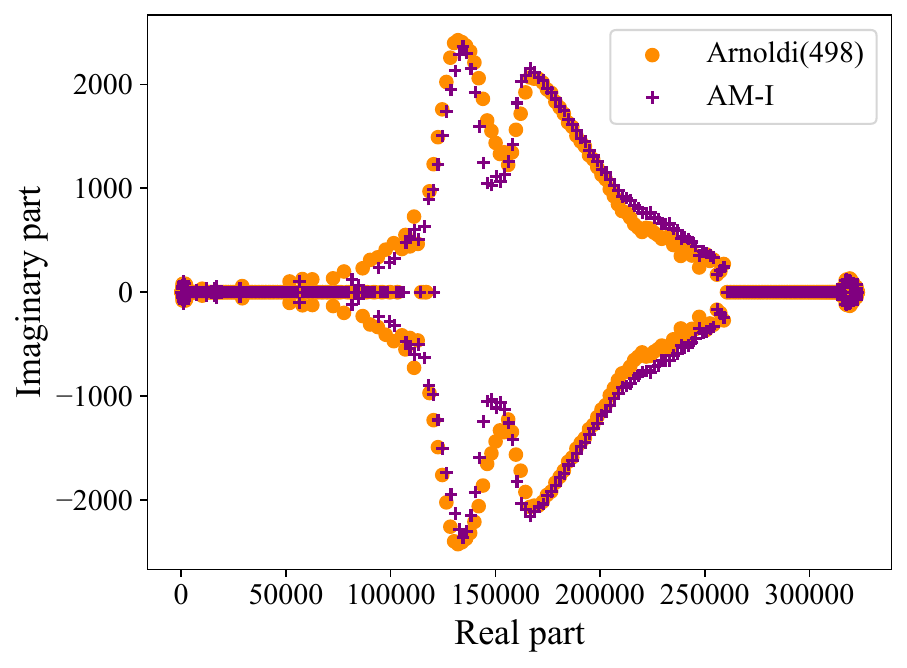}
}
\subfigure{
\includegraphics[width=0.46\textwidth]{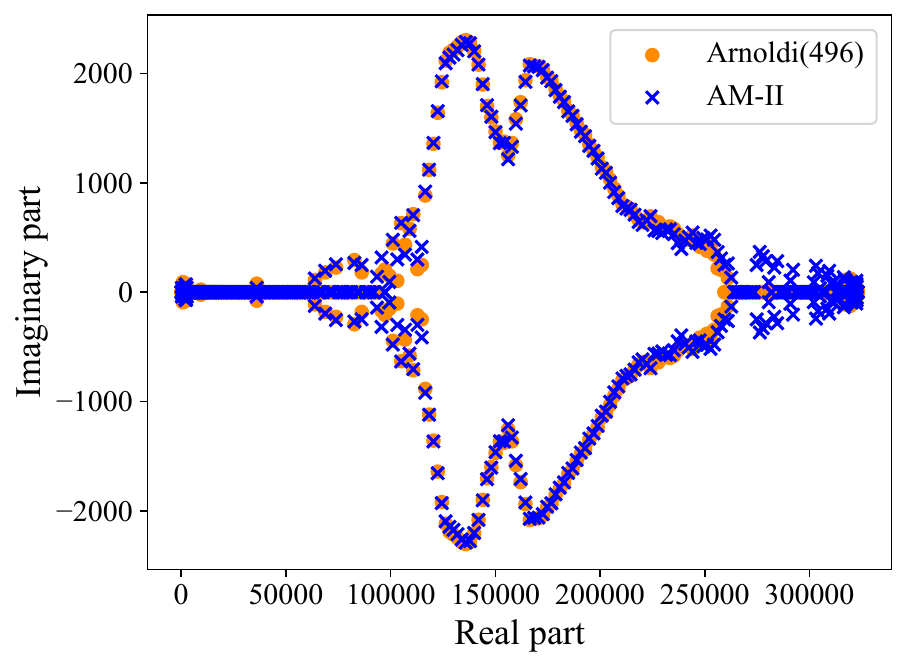}
}
\caption{The modified Bratu problem with $\alpha = 20$. The Ritz values from Arnoldi($k$) and the eigenvalue estimates from AM-I/AM-II at the last iteration. }
\label{fig:bratu:nonsymmetric_eig}
\end{figure}   
  
  We also tested AM-I and AM-II with fixed $\beta_k$. 
  \Cref{fig:bratu:nonsymmetric_betak} shows that the choice of $\beta_k$ can largely affect the convergence behaviours of both methods, and the adaptive mixing strategy performs well. It is worth noting that for the Picard iteration, choosing $\beta$ from $\{10^{-5},\dots,10^{-2} \}$ causes  divergence, which  suggests $\theta_k > 1$ in \eqref{them:type1} and \eqref{them:type2} when $\beta_k \in \{10^{-5},\dots,10^{-2} \} $. Nevertheless, the residual norms of the restarted AM methods can still converge since the minimization problems in \eqref{them:type1} and \eqref{them:type2} dominate the convergence when $m_k$ is large.

 \begin{figure}[ht]
\centering 
\subfigure{
\includegraphics[width=0.46\textwidth]{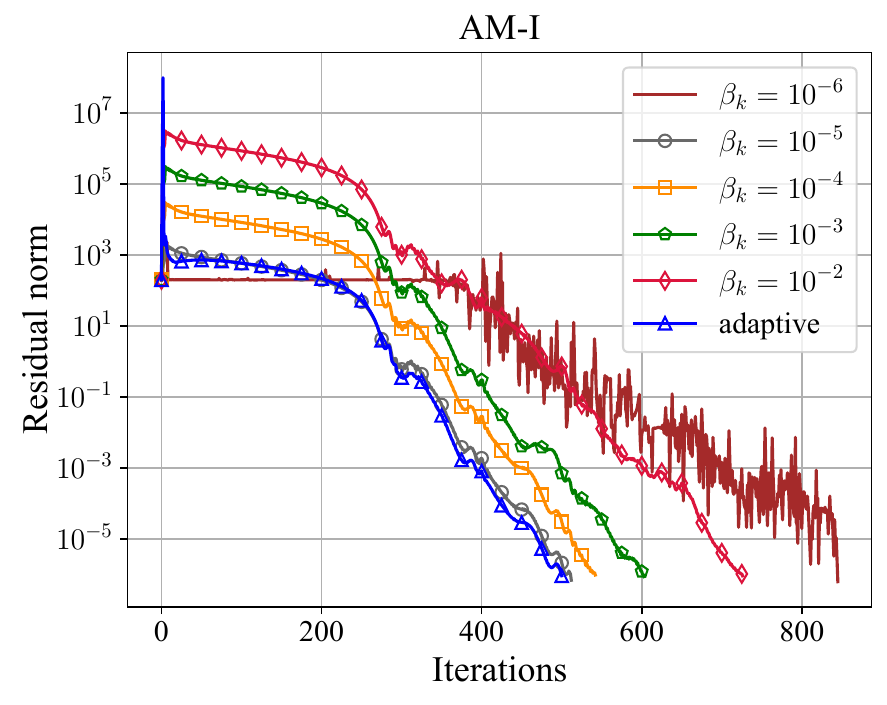}
}
\subfigure{
\includegraphics[width=0.46\textwidth]{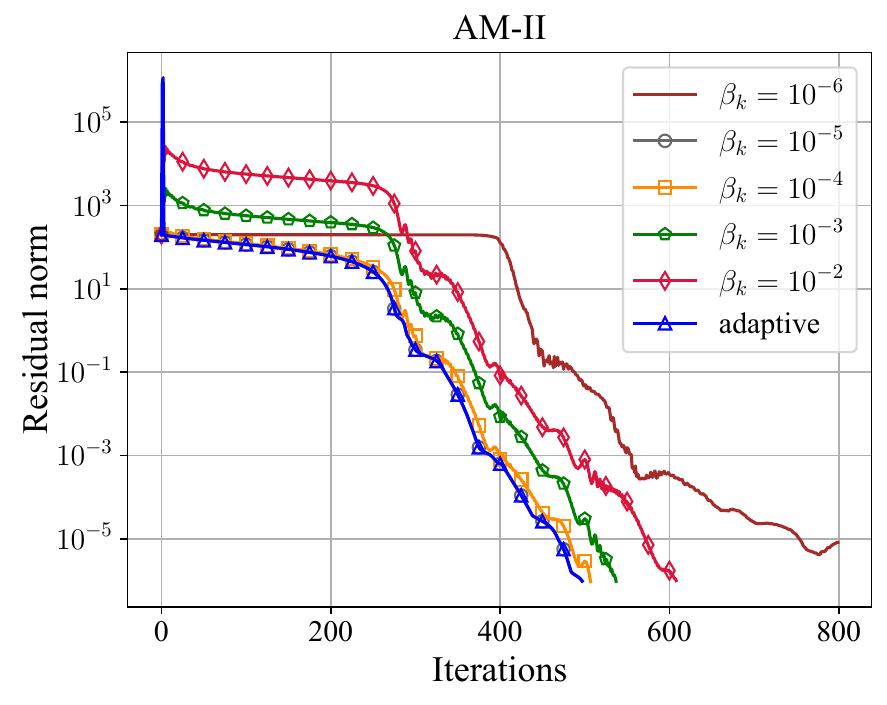}
}
\caption{The modified Bratu problem with $\alpha = 20$. Left: AM-I with different choices of $\beta_k$.  Right: AM-II with different choices of $\beta_k$. ``adaptive'' means using the adaptive mixing strategy.}
\label{fig:bratu:nonsymmetric_betak}
\end{figure} 

  \subsubsection{Symmetric Jacobian}
  \Cref{fig:bratu:symmetric:eigens} shows the eigenvalue estimates computed by ST-AM-I/ST-AM-II and the Ritz values of $F'(U^*)$ computed by the $k$-step symmetric Lanczos method \cite{golub2013matrix} (denoted by Lanczos($k$)), where $F(U^*) = 0$.  It is observed that the eigenvalue estimates  well approximate the Ritz values, which verifies \Cref{them:short-term:eigen_estimate}. 
  
 \begin{figure}[ht]
\centering 
\includegraphics[width=0.46\textwidth]{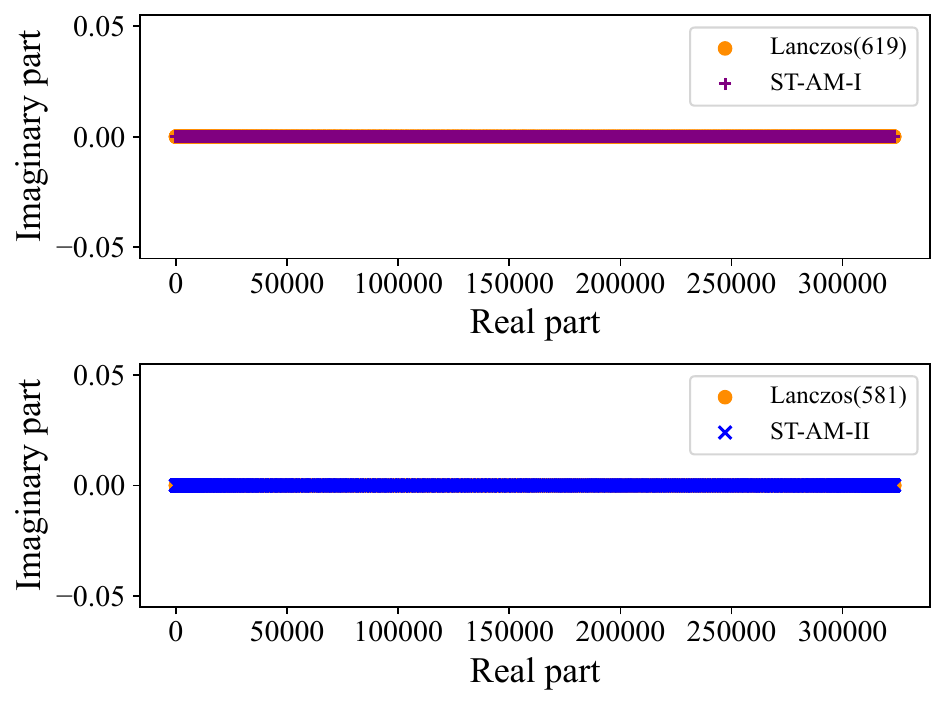}
\caption{The modified Bratu problem with $\alpha = 0$.  The Ritz values from Lanczos($m$)  and the eigenvalue estimates from ST-AM-I/ST-AM-II at the last iteration.  }
\label{fig:bratu:symmetric:eigens}
\end{figure} 


\subsection{Additional results of solving the Chandrasekhar H-equation}
  \Cref{fig:H_eq:eig} shows the Ritz values of $F'(h^*)$ and the eigenvalue estimates, where $h^*$ is the solution. 
  We computed the Ritz values of $F'(h^*)$ by applying 500 steps of Arnoldi's method to $G'(h^*)$. It is observed in \Cref{fig:H_eq:eig} that most Ritz values are nearly $1$, which accounts for the efficiency of the simple Picard iteration for solving this problem. Since the eigenvalues form 3 clusters, we also computed three eigenvalue estimates by AM-I/AM-II ($\eta=\infty, m=100, \tau=10^{-15},$ and $\beta_k \equiv 1$). We find the eigenvalue estimates still roughly match the Ritz values in the cases $\omega=0.5$ and $\omega=0.99$. For $\omega=1$, the Jacobian $F'(h^*)$ is singular, so the error in estimating the eigenvalue zero is large. 

 \begin{figure}[ht]
\centering 
\subfigure{
\includegraphics[width=0.31\textwidth]{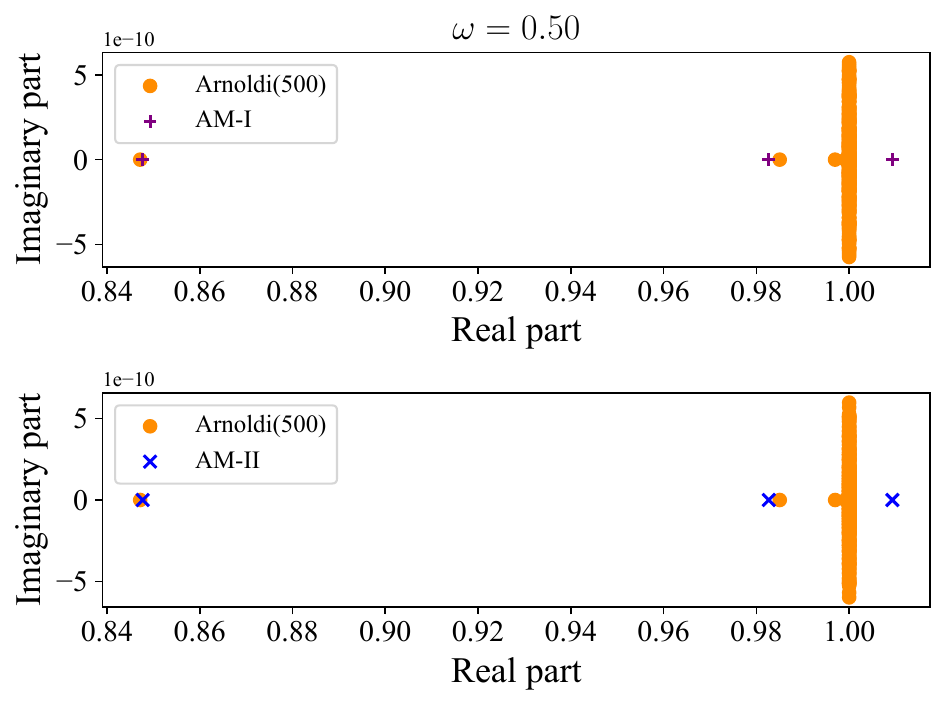}
}
\subfigure{
\includegraphics[width=0.31\textwidth]{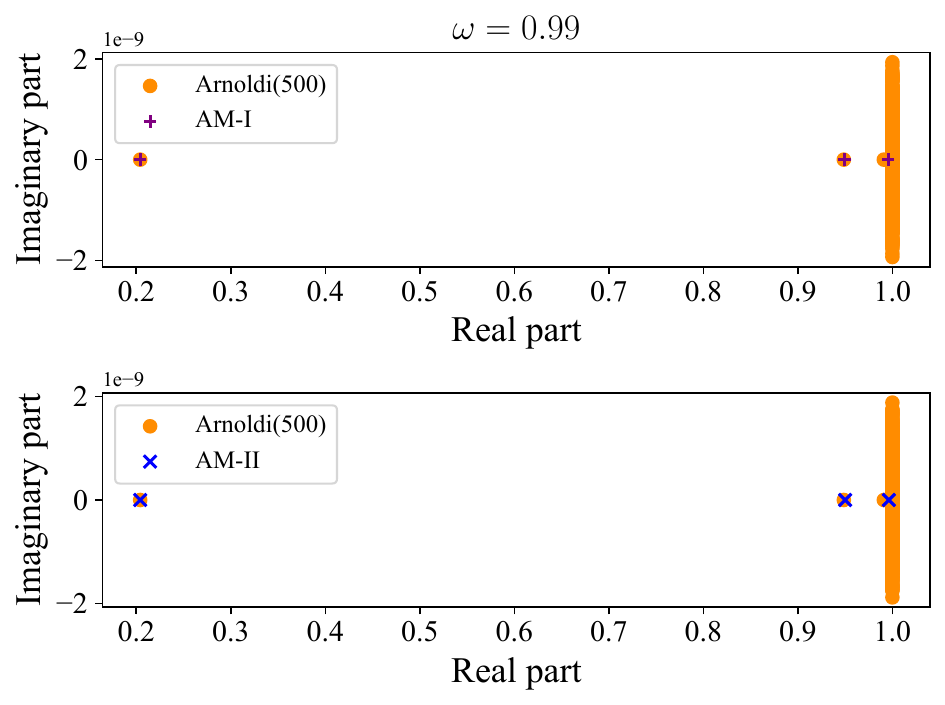}
}
\subfigure{
\includegraphics[width=0.31\textwidth]{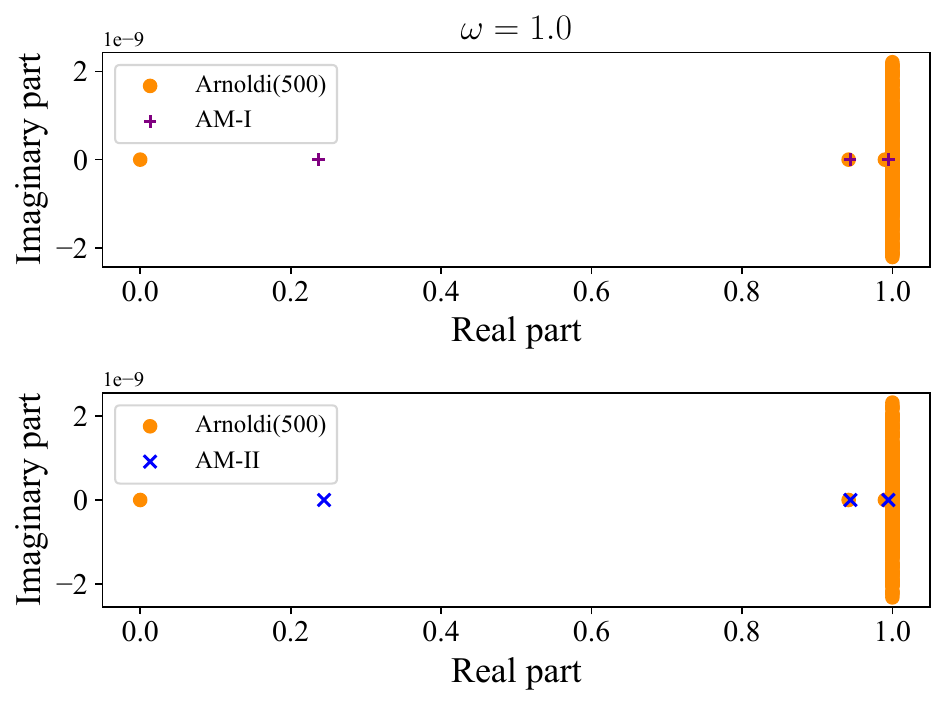}
}
\caption{ The Chandrasekhar H-equation with $\omega = 0.50, 0.99,$ and $1.0$. The Ritz values of $F'(h^*)$ computed by Arnoldi's method and the eigenvalue estimates computed by AM-I/AM-II. }
\label{fig:H_eq:eig}
\end{figure}

 \subsection{Additional results of solving the regularized logistic regression}
  \Cref{fig:logistic_regression_eig} shows the eigenvalue estimates computed by the eigenvalue estimation procedure of ST-AM-I/ST-AM-II at the last iteration. The comparison with the Ritz values of $\nabla^2 f(x^*)$ indicates that the extreme eigenvalues are well approximated. 

 \begin{figure}[ht]
\centering 
\includegraphics[width=0.46\textwidth]{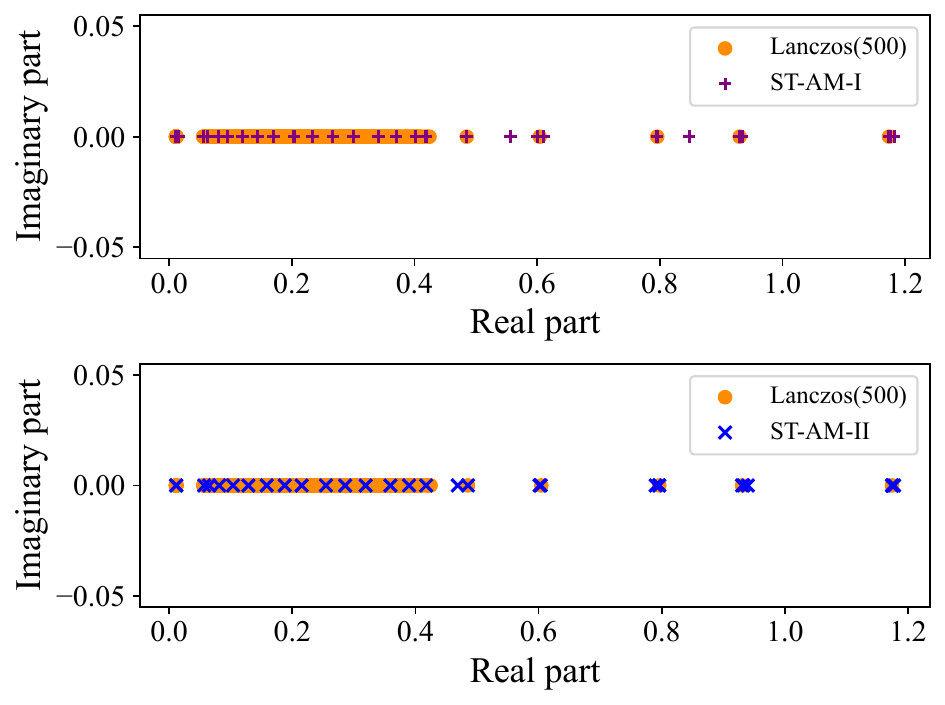}
\caption{The regularized logistic regression with $w=0.01$. The Ritz values from Lanczos method and the eigenvalue estimates from ST-AM-I/ST-AM-II at the last iteration.  }
\label{fig:logistic_regression_eig}
\end{figure}

\bibliographystyle{plainnat}
\bibliography{references3} 

\end{document}